\newcommand{\vv}[1]{\overrightarrow{#1}}
\newcounter{TPFassumption}
\newcounter{TPFproperties}
\newtheorem{remark}{Remark}[section]
\newtheorem{proposition}{Proposition}[section]
\newtheorem{corollary}{Corollary}[section]
\newtheorem{theorem}{Theorem}[section]
\newtheorem{definition}{Definition}[section]
\def \a  {\alpha}
\def \b  {\beta}
\def \g  {\gamma}
\def \Gf {\mathcal{G}}
\def \d  {\delta}
\def \e  {\varepsilon}
\def \vr  {\varrho}
\def \Om {\Omega}
\def \t  {\tau}
\def \del {\nabla}
\def \H  {{\cal H}}
\def \Hdr  {{\mathcal{H}^{(d)}}}
\def \Him  {{\mathcal{H}^{(i)}}}
\def \Pim  {{p^{(i)}_c}}
\def \Pdr  {{p^{(d)}_c}}
\def \F   {{\mathcal{F}}}
\def \p  {\partial}
\def \R  {\mathbb{R}}
\def \N  {\mathbb{N}}
\def \Gf  {{\cal G}}
\def \sgn {{\rm sign}}
\def \W {{\bf {\cal W}}}
\newenvironment{rcases}
  {\left.\begin{aligned}}
  {\end{aligned}\right\rbrace}
\title{Fronts in two-phase porous flow problems: effects of hysteresis and dynamic capillarity}
\author[1]{K. Mitra}
\author[2]{T. K\"oppl}
\author[3]{I. S. Pop}
\author[4]{C. J. van Duijn}
\author[5]{R. Helmig}
\affil[1]{Eindhoven University of Technology, Department of Mathematics and Computer Science, Groene loper 5, 5612 AZ Eindhoven, Netherlands,
Hasselt University, Faculty of Science, Martelarenlaan 42, BE3500 Hasselt, Belgium, k.mitra@tue.nl}
\affil[2]{Department of Mathematics, University of Technology Munich, 
Boltzmannstra\ss e 3, 85748 Garching bei M\"unchen, Germany, koepplto@ma.tum.de}
\affil[3]{Hasselt University, Faculty of Science, Martelarenlaan 42, BE3500 Hasselt, Belgium, 
University of Bergen, Department of Mathematics, Norway, sorin.pop@uhasselt.be}
\affil[4]{University of Utrecht, Department of Earth Sciences, Princetonlaan 8a, 3584 CB Utrecht, Netherlands,
Eindhoven University of Technology, Department of Mechanical Engineering, PO Box, 513 5600 MB Eindhoven, Netherlands, c.j.v.duijn@tue.nl}
\affil[5]{Department of Hydromechanics and Modelling of Hydrosystems, University of Stuttgart, Pfaffenwaldring 61, 70569 Stuttgart, Germany
rainer.helmig@iws.uni-stuttgart.de}
\date{}                     
\begin{document}

\maketitle

\begin{abstract}
In this work, we study the behaviour of saturation fronts for two phase flow through a long homogeneous porous column. In particular, 
the model  includes   hysteresis and dynamic effects in the capillary pressure and hysteresis in the permeabilities. The analysis uses 
travelling wave approximation. Entropy solutions are derived for Riemann problems that are arising in this context. 
These solutions belong to a much broader class compared to the standard Oleinik solutions, where  hysteresis and 
dynamic effects are neglected. The relevant cases are examined and the corresponding solutions are categorized. 
They include non-monotone profiles, multiple shocks and self-developing stable saturation plateaus. 
Numerical results are presented that illustrate the mathematical analysis. 
Finally, we compare experimental results with our theoretical findings.
\end{abstract}

\section{Introduction}
Modelling of two phase flow through the subsurface is important for many practical applications, from groundwater modelling 
and oil and gas recovery to CO$_2$ sequestration. 
For this purpose the mass balance equations are used which read in the absence of source terms \cite{Bear1979,helmig1997multiphase} as follows:
\begin{equation}
\label{2PF_eq:massbalance}
\phi \frac{\p\left(\rho_\a S_\a \right)}{\partial t} + \del\cdot\left(\rho_\a v_\a \right) = 0,\; \a \in \left\{w,n \right\},
\end{equation}
where $\a=n$ denotes the non-wetting phase and $\a=w$ the wetting phase. Further, $\phi$ is the porosity, $S_\alpha$ 
and $\rho_\alpha$ the saturation and density of the phases. 
The phase-velocities $v_\alpha$ are given by the Darcy's law \cite{Bear1979,helmig1997multiphase},
\begin{equation}
\label{2PF_eq:Darcy}
v_\alpha = -\frac{k_{r\alpha}}{\mu_\alpha}K \left( \del p_\a 
- \rho_\a g\hat{e}_g \right),\; \a \in \left\{w,n\right\}.
\end{equation}
Here $K\left[ \unit{m}^2 \right]$ is the absolute permeability of the porous medium,  
$\mu_\alpha \left[Pa\cdot s\right]$ the viscosity and $k_{r\alpha}$ the relative permeability 
of each phase. Moreover, $p_\alpha\; \left[\unit{Pa} \right]$, 
$g\;\left[\unitfrac{m}{s^2} \right]$ and $\hat{e}_g$ stand for the phase pressure, the gravitational acceleration 
and the unit vector along gravity, respectively. 
Observe that the system \eqref{2PF_eq:massbalance}-\eqref{2PF_eq:Darcy} is not closed as there are more unknowns 
than equations, i.e. $S_\a$, $k_{r\a}$ and $p_\a$. Hence, one needs 
to take certain assumptions. Assuming incompressibility  results in $\rho_\a$ being constant. Moreover, by definition 
\begin{equation}
\label{2PF_eq:satBal}
S_w + S_n = 1.
\end{equation}
Commonly it is assumed that the relative permeabilities, as well as the phase pressure difference, are 
functions of the saturation of the wetting phase \cite{Bear1979,helmig1997multiphase},
\begin{equation}
k_{rn}=k_{rn}(S_w),\quad  k_{rw}=k_{rw}(S_w) \text{ and } p_n-p_w=p_c(S_w).\label{2PF_eq:standardModel}
\end{equation}
The function $p_c: (0,1]\to \R^+$ is referred to as the capillary pressure function. 
System \eqref{2PF_eq:massbalance}-\eqref{2PF_eq:standardModel} reduces to the hyperbolic Buckley-Leverett equation if this term is 
neglected, i.e. $p_c\equiv 0$. The model given by \eqref{2PF_eq:massbalance}-\eqref{2PF_eq:standardModel} works well under close to 
equilibrium conditions and when flow reversal does not take place. However, some more general cases cannot be explained by this model.

One of the first evidences of deviation from the standard model was reported in the 1931 paper by 
Richards \cite{richards1931capillary} where he concluded that the capillary pressure term is hysteretic in nature. 
Capillary hysteresis refers to the phenomenon that $p_c$ measured for a wetting phase infiltration process follows a curve, 
denoted here by $\Pim(S_w)$, which differs from $p_c$ measured for a drainage process, denoted by $\Pdr(S_w)$. If the 
process changes from infiltration to drainage or vice versa, then the $p_c$ follows scanning curves that are intermediate to 
$\Pim(S_w)$ and $\Pdr(S_w)$ \cite{beliaev2001theoretical}.  This is shown in detail in  \Cref{2PF_fig:Scanning} (left). Since then,
hysteresis has been studied experimentally \cite{morrow1965capillary,poulovassilis1970hysteresis,zhuang2017analysis}, 
analytically \cite{Cao2015688,VANDUIJN2018232,el2018traveling,SCHWEIZER20125594,ratz2014hysteresis} and numerically 
\cite{Zhang2017,papafotiou2010numerical,ratz2014hysteresis,schneider2018stable,brokate2012numerical}. Variety of models have been
proposed to incorporate the effects of hysteresis, such as independent and dependent domain models 
\cite{philip1964similarity,Maulem_hys,parker1987parametric} and interfacial area models 
\cite{HASSANIZADEH1990169,hassanizadeh1993toward,pop2009horizontal,niessner2008model}. A comprehensive study of these models 
can be found in \cite{Kmitra2017}. However, in this paper we will use the play-type hysteresis model  \cite{beliaev2001theoretical,Kmitra2017} 
that approximates scanning curves as constant saturation lines. This model is comparatively simple to treat analytically 
\cite{Cao2015688,SCHWEIZER20125594,ratz2014hysteresis,VANDUIJN2018232}, it has a physical basis \cite{beliaev2001theoretical,schweizer2005laws} 
and it can be extended to depict the realistic cases accurately \cite{Kmitra2017}.

\begin{figure}[H]
\begin{minipage}{.45\textwidth}
\centering
\includegraphics[scale=.3]{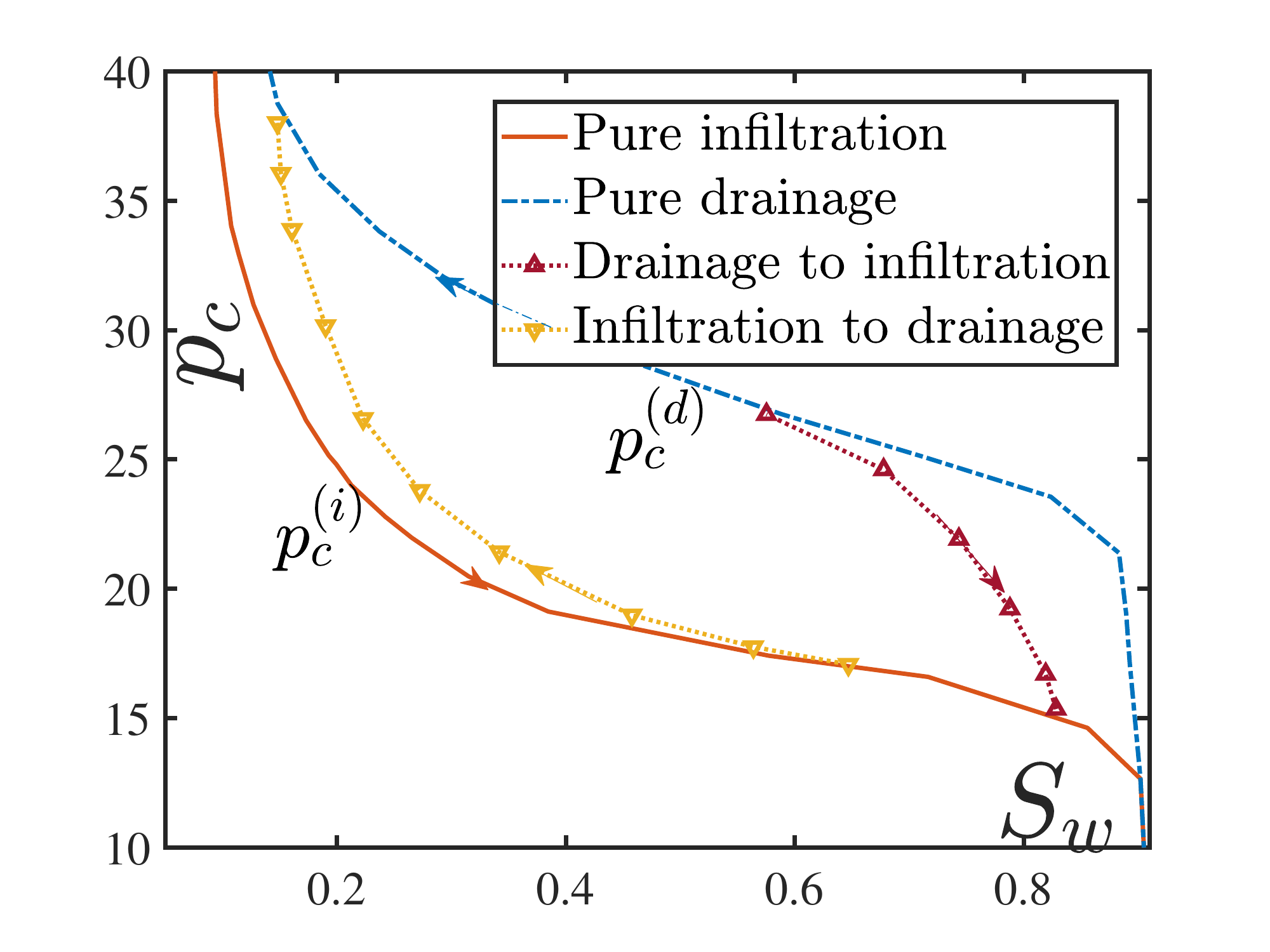}
\end{minipage}
\begin{minipage}{.45\textwidth}
\centering
\includegraphics[scale=.3]{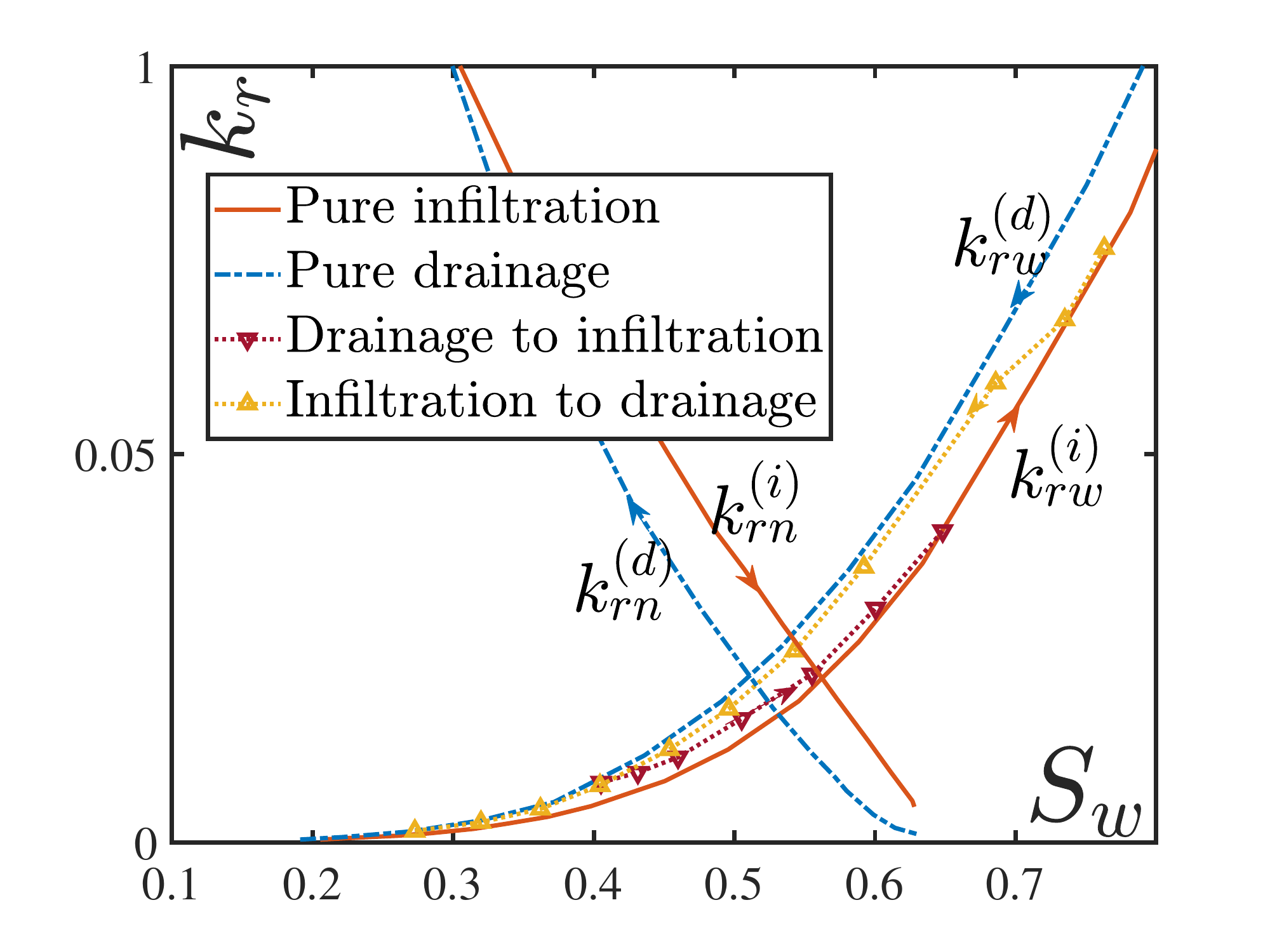}
\end{minipage}
\caption{(left) Hysteresis of capillary pressure and scanning curves. 
The plots drawn use data points from Figure 4 and 5 of \cite{morrow1965capillary}. (right) 
Hysteresis of relative permeabilities. Experimental data from \cite{topp1966hysteretic} are used for t
he $k_{rw}$ plots and the corresponding scanning curves. Plots for $k_{rn}$ show data from \cite{gladfelter1980effect}. 
The curves are scaled in the $y$ direction. }\label{2PF_fig:Scanning}
\end{figure}

A similar hysteretic behaviour is observed for the relative permeabilities too, although to a lesser
extent. Hysteresis of the non-wetting
phase relative permeability in the two phase case (oil and water for example) is reported in 
\cite{gladfelter1980effect,braun1995relative,killough1976reservoir}. The wetting permeability $k_{rw}$  also exhibits 
hysteresis \cite{topp1966hysteretic,poulovassilis1970hysteresis} but the effect is less pronounced, see  \Cref{2PF_fig:Scanning} (right).

Another effect that cannot be explained by the standard model is the occurrences of overshoots. More precisely, 
in infiltration experiments through initially low saturated soils it is observed that if the flow rate is large 
enough then the saturation at an interior point is larger than that on the boundary even in the absence of internal sources 
\cite{dicarlo2004experimental,bottero2011nonequilibrium,shiozawa2004unexpected}. This cannot be explained by a second order model 
such as \eqref{2PF_eq:massbalance}-\eqref{2PF_eq:standardModel}  \cite{egorov2003stability,van2004steady,schweizer2012instability}.
Hence, based on thermodynamic considerations the dynamic capillary model was proposed in \cite{hassanizadeh1993thermodynamic}. 
Since then the dynamic capillary term has been measured  experimentally \cite{camps2010experimental,kalaydjian1992dynamic} and it was 
used successfully  to explain overshoots 
\cite{cuesta2000infiltration,van2007new,van2013travelling,VANDUIJN2018232,mitra2018wetting,ratz2014hysteresis,SCHWEIZER20125594}. Also the 
well-posedness of the dynamic capillarity model has been proved \cite{cao2016degenerate,cao2015uniqueness,mikelic2010global,bohm1985diffusion} 
and numerical methods have been investigated \cite{karpinski2017analysis,cao2018convergence,karpinski2017_2,cao2018error,ewing1978time}.

\begin{figure}
\begin{center}
\includegraphics[width=0.5\textwidth]{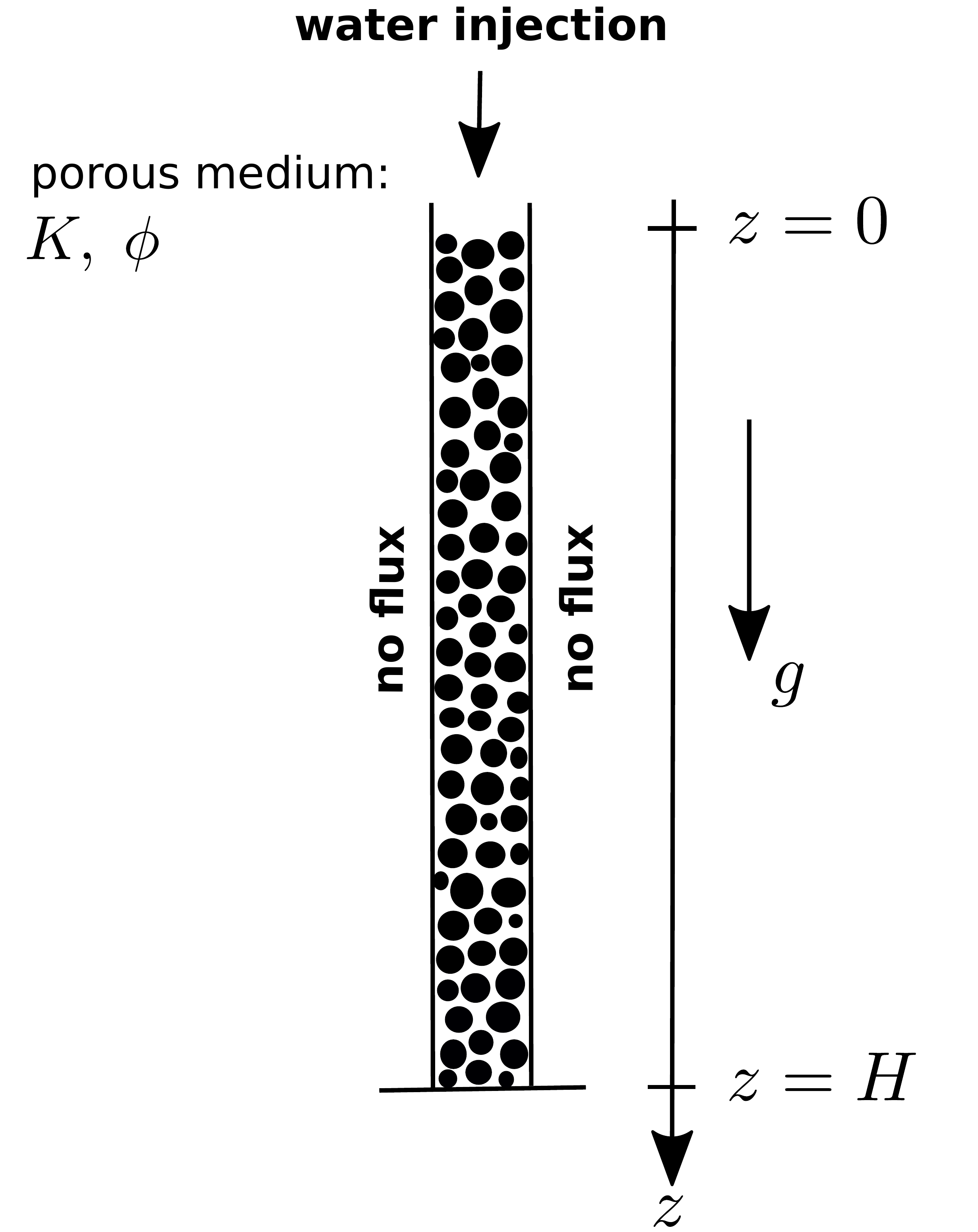}
\end{center}
\caption{\label{2PF_fig:column} Setup of an infiltration experiment. At the inlet of a column having the height 
$H$ water is injected by a constant rate. The main axis
of the column is orientated such that it is aligned with the gravity vector.}
\end{figure}

In this paper we are interested in studying how the flow behaviour is influenced if one considers the non-equilibrium effects, i.e. hysteresis
and dynamic capillarity. For this purpose, we study the system in a one-dimensional setting. The one dimensional case is relevant when one 
spatial direction is dominant; it approximates flow through viscous fingers 
\cite{ratz2014hysteresis,glass1989mechanism,rezanezhad2006experimental} and it can explain results from the standard experimental 
setting shown in  \Cref{2PF_fig:column}  \cite{dicarlo2004experimental,bottero2011nonequilibrium,shiozawa2004unexpected}. In this study, 
the behaviour of the fronts is investigated by traveling wave (TW) solutions. The TW solutions can approximate the saturation and pressure 
profiles in an infiltration experiments through a long column, and the existence conditions of the TWs act as the entropy conditions for 
the corresponding hyperbolic model when the viscous terms are disregarded. For the unsaturated case ($p_n=0$) TW solutions with dynamic 
effect were analysed in \cite{cuesta2000infiltration}. For the two phase case it was shown rigorously in 
\cite{van2007new,van2013travelling,spayd2011buckley} that non-monotone travelling waves and non-standard entropy solutions are existing 
if one includes dynamic capillarity effect. Similar analysis but for higher order viscous terms containing spatial derivatives were performed 
in \cite{bedjaoui2002diffusive,el2017dispersive}.  The existence of TW solutions for the unsaturated case when dynamic capillarity and 
capillary hysteresis are present was proved in \cite{VANDUIJN2018232,mitra2018wetting} and criteria for non-monotonicity and reaching full 
saturation were stated. It is evidenced in \cite{schneider2018stable,hilfer2014saturation} that hysteresis can explain stable 
saturation plateaus but  it cannot initiate them. In \cite{ratz2014hysteresis,el2018traveling} it is shown how both hysteresis and 
dynamic capillarity  are required to explain the growth of viscous fingers. The entropy conditions for Buckley-Leverett equation 
considering hysteresis in only permeability were derived in \cite{plohr2001modeling,schaerer2006permeability,bedrikovetsky1996mathematical}. 
However, hysteresis and nonlinearities were not included in the viscous term.  This is taken into consideration in \cite{abreu2019relaxation}
where the authors add a dynamic term to model permeability hysteresis, while disregarding hysteresis and dynamic  
effects in capillary pressure.  The behaviour of TW for a non-monotone flux function in the presence of a third order term was 
described in \cite{shearer2015traveling}.  

In our current work we build upon  \cite{cuesta2000infiltration,van2007new,van2013travelling,VANDUIJN2018232,mitra2018wetting}  
to describe the behaviour of fronts in the two-phase case when dynamic capillarity and both type of hysteresis are included.
The models that are used in our analysis are introduced in \Cref{2PF_sec:MathModel}. \Cref{2PF_sec:CaseA} discusses the existence 
of TWs when hysteresis and dynamic effects are included in the capillary pressure but not in the permeabilities. Entropy 
conditions are derived and they reveal that there can be non-classical shocks. In \Cref{2PF_sec:CaseB}, the analysis is extended 
to include hysteresis in the permeabilities.  This makes self-developing stable saturation plateaus and a broader class of entropy 
solutions possible. \Cref{2PF_sec:NumRes} presents numerical results that support our analytical findings. Finally, we make some 
concluding remarks in \Cref{2PF_sec:Conclude} and compare the results with experiments.

\section{Mathematical model}\label{2PF_sec:MathModel}

This section is dedicated to the formulation of a mathematical model that can be used to describe an
infiltration process of a fluid into a homogeneous porous column. An example for such an infiltration process is the
injection of water into a dry sand column (see Figure \ref{2PF_fig:column}).

\subsection{Governing equations}
Here we consider the one-dimensional situation where the flow problem is defined on an interval $\left(0,H\right)$.  This 
simplification is justified by the fact 
that the walls of the porous column, in which the fluids are injected are impermeable and that saturation is in general 
almost constant across the
section area of the column. The axis is pointing in the direction of gravity. The medium is assumed to be homogeneous 
and the fluids are incompressible. Under these constraints, \eqref{2PF_eq:massbalance}-\eqref{2PF_eq:Darcy} simplify to
\begin{equation}
\label{2PF_eq:2phase1stForm}
\phi \frac{\p S_\a}{\partial t} + \dfrac{\p v_\a}{\p z}=0,\; v_\a=-\dfrac{ k_{r\a}}{\mu_\a}K \left(\dfrac{\p p_\a}{\p z}
-\rho_\a g \right),\; \a \in \left\{w,n \right\},
\end{equation}
where $t$ and $z$ are denoting the time and space variables, respectively. To further simplify the model, after summing 
\eqref{2PF_eq:2phase1stForm} for the two phases and using \eqref{2PF_eq:satBal} we observe that the total velocity 
\begin{equation}
\label{eq:vtot}
v(z,t) = v_w(z,t) + v_n(z,t)
\end{equation}
is constant in space. In addition to that, we assume that $v$ is also constant in time, which occurs, e.g. if a constant 
influx (injection rate) is prescribed at the inlet $z=0$. This gives
\begin{equation}
\frac{\partial v}{\partial t} = \frac{\partial v}{\partial z}= 0, \text{ or } v\left( z,t \right) \equiv v \text{ for } 
z\in (0,H) \text{ and } t>0.\label{2PF_eq:TotalVel}
\end{equation}
From \eqref{2PF_eq:2phase1stForm}-\eqref{2PF_eq:TotalVel} one finds
\begin{equation}
v_w = \frac{k_{rw}}{k_{rw}+ \frac{\mu_w}{\mu_n}k_{rn}} v + \dfrac{K}{\mu_n} \frac{k_{rw} k_{rn}}{k_{rw}+ 
\frac{\mu_w}{\mu_n}k_{rn}} \left( \frac{ \partial p_c }{ \partial z} + \left(\rho_w-\rho_n \right)g \right).\label{2PF_eq:vwExp}
\end{equation}
At this stage we define the fractional flow function
\begin{equation}
f:=  \frac{k_{rw}}{k_{rw}+ \frac{\mu_w}{\mu_n}k_{rn}}, 
\label{2PF_eq:Deff}
\end{equation}
and the function
\begin{equation}
h:= \frac{k_{rw} k_{rn} }{k_{rw} + \frac{\mu_w}{\mu_n} k_{rn}} = k_{rn} f.
\label{2PF_eq:Defh}
\end{equation}
 Substituting these relations and definitions into \eqref{2PF_eq:2phase1stForm} for $\alpha = w$ yields the transport equation for the wetting phase
\begin{equation}
\frac{\partial S}{\partial t} + \frac{v}{\phi} \frac{\partial}{\partial z} \left[ f +
\frac{K \left(\rho_w-\rho_n \right) g}{v\mu_n} h + \frac{K}{v\mu_n} h
\frac{\partial p}{\partial z} \right] = 0,\label{2PF_eq:2phase2ndForm}
\end{equation}
where we used the notation
\begin{equation}
S:=S_w \text{ and } p:=p_n-p_w.
\end{equation}
Note that, $f$ and $h$ are functions of $S$ and possibly of $p$, as shown below.

\subsection{Modelling hysteresis and dynamic capillarity}
To incorporate hysteresis and dynamic capillarity in the model, one needs to extend capillary pressure and relative permeability 
given in the closure relationship \eqref{2PF_eq:standardModel}.  
\subsubsection{Capillary pressure}
The following expression is used to extend the capillary pressure:
 \begin{equation}
p\in \frac{1}{2}(\Pim(S)+\Pdr(S))-\frac{1}{2}(\Pdr(S)-\Pim(S))\cdot\sgn\left(\frac{\p S}{\p t}\right)-\t \frac{\p S}{\p t},\label{2PF_eq:Combined}
\end{equation}
where $\text{sign}(\cdot)$ denotes the multi-valued signum graph
\begin{equation}
\text{sign} \left( \xi \right) = 
\begin{cases}
 1, &\text{ for } \xi > 0,\\
 \left[-1,1 \right], &\text{ for } \xi = 0,\\
 -1, & \text{ for } \xi < 0,
\end{cases}\label{2PF_eq:DefSign}
\end{equation}
see \cite{beliaev2001theoretical,hassanizadeh1993thermodynamic,SCHWEIZER20125594}. 
The second and third term in the right hand side of \eqref{2PF_eq:Combined} describe, respectively, 
capillary hysteresis \cite{beliaev2001theoretical} and dynamic capillarity \cite{hassanizadeh1993thermodynamic}. Further, 
$\t\geq 0$ denotes the dynamic capillary coefficient. It models relaxation or damping in the capillary pressure. Although in practice
$\t$ may depend on $S$ \cite{camps2010experimental,bottero2011nonequilibrium}, here we assume it to be constant. The case of non-constant 
$\t$ is considered in \cite{VANDUIJN2018232,mitra2018wetting}.
The capillary pressure functions $p_c^{\left(j \right)}$, $j\in \{i,d\}$, fulfill 
\cite{Bear1979,helmig1997multiphase,morrow1965capillary}:
\begin{enumerate}[label=(P\theTPFproperties)]
  \item $p_c^{\left(j \right)}: \left(0,1 \right] \rightarrow \left[0,\infty \right),\; p_c^{\left(j \right)} \in
  C^1\left( \left( 0,1 \right] \right),\; p_c^{\left(j \right)} \left( 1 \right) = 0.$ Moreover, 
  ${ p_c^{\left(j \right)} }^\prime\left( S\right) < 0 \text{ and } p_c^{\left( i \right)}\left(S \right) < p_c^{\left( d \right)}\left(S \right) 
  \text{ for } S \in \left(0,1 \right).$\label{2PF_P:Pc}
\stepcounter{TPFproperties}
\end{enumerate}
Here, and later in this paper, a prime denotes differentiation with respect to the argument.
In the absence of dynamic effects, i.e. $\t=0$, expression \eqref{2PF_eq:Combined} implies 
$$
p=\begin{cases}
\Pim(S) &\text{ when } \p_t S>0,\\
\Pdr(S) &\text{ when } \p_t S<0.
\end{cases}
$$
This is precisely what is seen from water infiltration/drainage experiments \cite{morrow1965capillary}. 
When $\frac{\p S}{\p t}=0$, $p$ is between $\Pim(S)$ and $\Pdr(S)$. For this reason, the hysteresis described by \eqref{2PF_eq:Combined} 
is called play-type hysteresis: i.e. the scanning curves between $\Pim(S)$ and $\Pdr(S)$ are vertical.
 
Before discussing the case $\t>0$, we introduce for convenience the sets
\begin{align}
&\Him:=\{(S,p):S\in (0,1],\; p<\Pim(S)\},\\
&\Hdr:=\{(S,p):S\in (0,1],\; p>\Pdr(S)\},\\
&\H:=\{(S,p):S\in (0,1],\; \Pim(S)\leq p\leq \Pdr(S)\},\label{2PF_eq:SetHdef}
\end{align}
and the strip $\mathcal{W}=\Him\cup\H\cup\Hdr=\{0<S\leq 1\}$.
In \cite{mitra2018wetting} it is shown that pressure expression \eqref{2PF_eq:Combined} can be written as
\begin{equation}
\label{2PF_eq:Psi}
\frac{\partial S}{\partial t} = \frac{1}{\tau}  \mathcal{F}\left( S, p \right):= \frac{1}{\tau}
\begin{cases}
 p_c^{(d)}\left( S \right) - p & \text{ if } (S,p)\in \Hdr,\\[-.1em]
 0 & \text{ if } (S,p)\in \H, \\[-.1em]
 p_c^{(i)}\left( S \right) - p & \text{ if } (S,p)\in \Him.
\end{cases}
\end{equation}

\subsubsection{Relative permeability}
To make the effect of hysteresis explicit in the relative permeabilities  
we need to incorporate a dependence on both $S$ and $\frac{\p S}{\p t}$. This dependence should satisfy
\begin{equation}
\label{2PF_eq:KraHys}
k_{r\a}\left(S,\frac{\partial S}{\partial t}\right) =
\begin{cases}
k^{(i)}_{r\a}\left( S \right) &\text{ if } \frac{\partial S}{\partial t} > 0, \\
k^{(d)}_{r\a}\left( S \right) &\text{ if } \frac{\partial S}{\partial t} < 0,
\end{cases} \text{ for }  \a\in\{w,n\}.
\end{equation}
Here $k^{(i)}_{r\a},k^{(d)}_{r\a}:[0,1]\to \R$ are the infiltration and drainage relative permeabilities 
obtained from experiments \cite{gladfelter1980effect,braun1995relative,killough1976reservoir,topp1966hysteretic,poulovassilis1970hysteresis}.
In line with the experimental outcomes, we assume here for $j\in \{i,d\}$,
\begin{enumerate}[label=(P\theTPFproperties)]
 \item $k^{(j)}_{rw} \in C^2\left( \left[0,1 \right] \right),\;{k^{(j)}_{rw}}^\prime\left( S \right)>0$ 
 for $0< S \leq 1,\;k^{(j)}_{rw}\left( 0 \right) = 0$
 and $k^{(j)}_{rw}$ is strictly convex. Moreover, for $0<S<1$, $k^{(i)}_{rw}(S)<k^{(d)}_{rw}(S)$.\stepcounter{TPFproperties}\label{2PF_P:Krw}
 \item $k^{(j)}_{rn} \in C^2\left( \left[0,1 \right] \right),\;{k^{(j)}_{rn}}^\prime\left( S \right)<0$ 
 for $0 \leq S < 1,\;k^{(j)}_{rn}\left( 1 \right) = 0$
 and $k_{rn}$ is strictly convex. Moreover, for $0<S<1$, $k^{(d)}_{rn}(S)<k^{(i)}_{rn}(S)$.\stepcounter{TPFproperties}\label{2PF_P:Krn}
\end{enumerate} 
Note the reverse ordering in $k_{rw}$ and $k_{rn}$ when switching from infiltration to drainage. 
This is demonstrated experimentally in \cite{braun1995relative,gladfelter1980effect,topp1966hysteretic}, see also \Cref{2PF_fig:Scanning}.
  
In \cite{zhuang2017advanced}, a play-type approach has been proposed to model $k_{r\a}$ where
\begin{align}
  k_{r\a}\in \frac{1}{2}( k_{r\a}^{(d)}(S)+ k_{r\a}^{(i)}(S)) 
  - \frac{1}{2}(k_{r\a}^{(d)}(S)-k_{r\a}^{(i)}(S))\cdot\sgn\left(\frac{\p S}{\p t}\right).\label{2PF_eq:PlaytypeForK}
\end{align}
However, this model is ill-posed in the unregularised case as for $\frac{\p S}{\p t}=0$ the 
relative permeabilities are  undetermined, i.e. the relative permeabilities have no equation to determine them when $\frac{\p S}{\p t}=0$. 
This is different for the capillary pressure \eqref{2PF_eq:Combined} 
because $p$ satisfies equation \eqref{2PF_eq:2phase2ndForm} as well. With the permeabilities we take an approach inspired by 
\cite{plohr2001modeling,schaerer2006permeability,bedrikovetsky1996mathematical}.
Here, inherited from the capillary pressure, the hysteresis is of the play-type as well, but now depending on 
$S$ and $p$, rather than on $S$ and $\frac{\p S}{\p t}$. We propose the following model: for $\a\in\{w,n\}$
\begin{equation}
k_{r\a}=k_{r\a}(S,p)=
\begin{cases}
k_{r\a}^{(d)}(S) &\text{  if } (S,p)\in \Hdr,\\
\bar{k}_{r\a}(S,p) &\text{  if } (S,p)\in \H,\\
k_{r\a}^{(i)}(S) &\text{  if } (S,p)\in \Him.
\end{cases}\label{2PF_eq:Perhys}
\end{equation}
Here $\bar{k}_{r\a}:\H\to [0,\infty)$ is a given function that satisfies
\begin{enumerate}[label=(P\theTPFproperties)]
 \item $\bar{k}_{r\a}\in C^2(\H)$ such that $k_{r\a}\in C(\mathcal{W})$ for $\a\in\{w,n\}$ and 
 $\p_p \bar{k}_{rw}>0$, $\p_p \bar{k}_{rn}<0$ in $\H$.\label{2PF_P:Zeta}\stepcounter{TPFproperties}
\end{enumerate}
Observe that, this implies $\bar{k}_{r\a}(S,p^{(j)}_c(S))=k_{r\a}^{(j)}(S)$ for $j\in \{i,d\}$. 
For the moment we leave the choice of $\bar{k}_{r\a}$ unspecified, except for properties \ref{2PF_P:Zeta}, 
as it neither influences the entropy conditions nor the critical $\t$ values introduced afterwards.

\begin{remark}
In the computations one needs to specify an expression for $\bar{k}_{r\a}$. In \Cref{2PF_sec:NumRes} we use
\begin{equation}
\bar{k}_{r\a}(S,p)=k_{r\a}^{(i)}(S) + (k_{r\a}^{(d)}(S)-k_{r\a}^{(i)}(S))\left(\frac{p- \Pim(S)}{\Pdr(S)-\Pim(S)}\right).
\label{2PF_eq:ExprsnKbar}
\end{equation}
This expression is obtained by considering $\sgn$ in \eqref{2PF_eq:Combined} and \eqref{2PF_eq:PlaytypeForK} 
as a function that can be eliminated. With $\t=0$ in \eqref{2PF_eq:Combined}, this results in \eqref{2PF_eq:ExprsnKbar}. 
Since the fraction \eqref{2PF_eq:ExprsnKbar} is bounded by 0 and 1 for $(S,p)\in \H$, we have 
$\lim_{S\searrow 0}k_{r\a}(S,p)=k_{r\a}^{(i)}(0)=k_{r\a}^{(d)}(0)$ and similar for $S\nearrow 1$.
\label{2PF_rem:Kbar}
\end{remark}
Observe that \eqref{2PF_eq:Perhys} is consistent with \eqref{2PF_eq:KraHys} as from \eqref{2PF_eq:Psi}, 
$\frac{\partial S}{\partial t}>0$ iff $p< \Pim(S)$ and $\frac{\partial S}{\partial t} < 0$ iff $p> \Pdr(S)$.
Moreover, the scanning curves for $k_{r\a}$ have constant $S$. Although  not true in general, see for instance \Cref{2PF_fig:Scanning}, 
we restrict ourselves to play-type for both $p$ and $k_{r\a}$. An extension describing non-vertical scanning curves is 
discussed in  \cite{Kmitra2017}.

Using \eqref{2PF_eq:Perhys} and \eqref{2PF_eq:Deff},\eqref{2PF_eq:Defh}, 
the nonlinearities $f$ and $h$ are expressed in terms of $S$ and $p$ as well:
\begin{equation}
\zeta(S,p)=
\begin{cases}
\zeta^{(d)}(S) &\text{  if } (S,p)\in \Hdr,\\
\bar{\zeta}(S,p) &\text{  if } (S,p)\in \H,\\
\zeta^{(i)}(S) &\text{  if } (S,p)\in \Him,
\end{cases}\; \text{ for } \zeta\in \{f,h\}.
\label{2PF_eq:fhhys}
\end{equation}
From \ref{2PF_P:Krw}-\ref{2PF_P:Zeta}  we deduce for $f$ and $h$:
\begin{enumerate}[label=(P\theTPFproperties)]
 \item $f\in C(\mathcal{W})$, $\bar{f}\in C^2(\H)$ and $\p_p \bar{f}>0$ in $\H$. 
 For $j\in \{i,d\}$, $f^{(j)}\in C^2\left( \left[0,1 \right] \right)$, ${f^{(j)}}^\prime(S)>0$ for 
 $0<S < 1,\;f^{(j)}\left( 0 \right) = 0,\;f^{(j)}\left( 1 \right) = 1$. 
 Moreover, for $0<S<1$, $f^{(i)}(S)<f^{(d)}(S)$.\label{2PF_P:f}\stepcounter{TPFproperties}
 \item $h\in C(\mathcal{W})$, $\bar{h}\in C^2(\H)$,  $h^{(j)}\in C^2\left( \left[0,1 \right] \right)$ and 
 $h^{(j)}(0)=h^{(j)}(1)=0$ for $j\in \{i,d\}$.\label{2PF_P:h}\stepcounter{TPFproperties}
\end{enumerate}
Observe that, in general no ordering holds between $h^{(i)}$ and $h^{(d)}$. 
Typical curves for $f^{(j)}$ and $h^{(j)}$ are shown in  \Cref{2PF_fig:fhF}.
The equations \eqref{2PF_eq:2phase2ndForm}, \eqref{2PF_eq:Combined} and \eqref{2PF_eq:fhhys} are the complete set of equations for our model.
 
\subsection{Dimensionless formulation}
Let $H\; \left[\unit{m} \right]$ be the characteristic length, $p_r\; \left[\unit{Pa} \right]$ the characteristic pressure,
$t_r=\;\frac{\phi H}{v} \left[\unit{s} \right]$ the characteristic time and $\t_r\;\left[\unit{Pa\cdot s} \right]$ 
the characteristic dynamic capillary constant. Inspired by the J-Leverett model \cite{leverett1941capillary}, 
we take as characteristic pressure $p_r=\sigma\sqrt{\frac{\phi}{K}}$, $\sigma$ being the surface tension between the two phases. 
Alternatively, one could consider $p_r=(\rho_n-\rho_w)g H$ which is a more common choice for the Richards equation with gravity. Setting
$$
\tilde{z} := \frac{z}{H},\quad \tilde{t} := \frac{t}{t_r},\quad \tilde{\psi} := \frac{\psi}{p_r} \text{ and } \tilde{\t} = \frac{\tau}{\tau_r}
$$
where $\psi\in \{p,\Pim,\Pdr\}$, and defining the dimensionless numbers
$$
N_g := \frac{K\left(\rho_w - \rho_n \right) g}{v \mu_n}\;\text{(gravity number) and }
N_c : = \frac{K p_r}{v \mu_n H}\;\text{(capillary number)},
$$
we obtain  from \eqref{2PF_eq:2phase2ndForm} the dimensionless transport equation
\begin{equation}
\frac{\partial S}{\partial \tilde{t}} +  \frac{\partial}{\partial \tilde{z}}\left( f + N_g h 
+ N_c h \frac{\partial \tilde{p}}{\partial \tilde{z}} \right) = 0.\label{2PF_eq:dimlessTE}
\end{equation}
The closure relation \eqref{2PF_eq:Combined} becomes
\begin{equation}
\label{2PF_eq:dimlessCl}
\tilde{p}\in \frac{1}{2}(\tilde{p}_c^{(i)}(S)+\tilde{p}_c^{(d)}(S))-\frac{1}{2}(\tilde{p}_c^{(d)}(S)
-\tilde{p}_c^{(i)})\cdot\sgn\left(\frac{\p S}{\p \tilde{t}}\right)-\tilde{\t}\dfrac{ \t_r}{p_r t_r} \frac{\p S}{\p \tilde{t}}.
\end{equation}
Now choosing $\t_r=N_c p_r t_r=p_r^2\frac{\phi K}{v^2 \mu_n}$,  the Leverett scaling for $p_r$ gives
\begin{equation}
\t_r=\frac{\sigma^2\phi^2}{\mu_n v^2} \text{ implying } \tilde{\t}=\dfrac{\mu_n v^2}{\sigma^2 \phi^2}\t. \label{2PF_eq:ScalingOfTau}
\end{equation}
This choice leaves us with a characteristic dynamic coefficient that is independent of the length scale of the problem. 
This is precisely the scaling used in \cite{van2007new,van2013travelling,graf2018vanishing} that is consistent with the hyperbolic limit.
Realistic values of dimensional and scaled quantities are given in \cite{manthey2008dimensional}. 

Dropping the $\tilde{\phantom{a}}$ sign from the notation, we are left with the dimensionless system
\begin{subequations}\label{2PF_eq:FullRegularisedSystem}
\begin{numcases}{(\mathcal{P})}
&$\dfrac{\partial S}{\partial t} +  \dfrac{\partial}{\partial z}\left( F(S,p) + 
N_c h(S,p) \dfrac{\partial p}{\partial z} \right)=0$, \label{2PF_eq:TE}\\[.5em]
&$\dfrac{\partial S}{\partial t} =  \dfrac{1}{N_c\tau} \mathcal{F}\left( S, p \right)$,\label{2PF_eq:hyst_psi}
\end{numcases}
\begin{equation}
\text{where  } F= f+ N_g h.\label{2PF_eq:DefF}
\end{equation}
\end{subequations}
This system can be seen as a regularisation of the hyperbolic Buckley-Leverett equation with gravity. 
Here the regularisation involves hysteresis and dynamic capillarity. Compared to the usual second order parabolic regularisation, 
yielding shocks that satisfy the standard Oleinik conditions \cite{oleinik1957discontinuous}, different (non-parabolic) regularisations 
may yield shocks that violate these conditions, see e.g. \cite{van2007new,lefloch2002hyperbolic}. Such shocks are called non-classical.

One of the main issues of this paper is to show the existence of non-classical shocks originating from System $(\mathcal{P})$. 
To this end we proceed as in \cite{van2007new} and study the existence of travelling wave (TW) solutions of $(\mathcal{P})$ 
that connect a left state $S_B$ to a right state $S_T$ in the presence of both hysteresis and dynamic capillarity.
Travelling waves for the model with only dynamic capillarity are analysed in \cite{van2013travelling,spayd2011buckley}. 
For the case of unsaturated flow, i.e. Richards equation with a convex flux function, existence and qualitative properties 
of travelling waves are considered in detail in \cite{cuesta2000infiltration,VANDUIJN2018232,mitra2018wetting}.

For the purpose of travelling waves we consider System $(\mathcal{P})$ in the domain $-\infty<z<\infty$. 
Then the capillary number $N_c$ can be removed from the problem by the scaling
$$
z:=z\slash N_c \text{ and  } t:=t\slash N_c.
$$
This yields the $N_c$ independent formulation
\begin{subequations}\label{2PF_eq:SystemPtilde}
\begin{numcases}{(\tilde{\mathcal{P}})}
&$\dfrac{\partial S}{\partial t} +  \dfrac{\partial}{\partial z}\left( F(S,p) + h(S,p) \dfrac{\partial p}{\partial z} \right)=0,$ 
\label{2PF_eq:TE2}\\[.5em]
&$\dfrac{\partial S}{\partial t} =  \dfrac{1}{\tau} \mathcal{F}\left( S, p \right)$,\label{2PF_eq:hyst_psi2}
\end{numcases}
\end{subequations}
with $-\infty<z<\infty$ and $t>0$. This is the starting point for the TW analysis.

\begin{remark}\label{2PF_rem:fhF}
Using the Brooks-Corey type expression, e.g. see \cite{brooks1966properties}, 
\begin{equation}
k_{rw}(S)=S^q \text{ and } k_{rn}(S)=(1-S)^q,
\end{equation}
with $q=2$, the nonlinearities \eqref{2PF_eq:Deff}, \eqref{2PF_eq:Defh} and  \eqref{2PF_eq:DefF} become
$$
f(S)=\frac{S^2}{S^2+ M (1-S)^2},\;h(S)=(1-S)^2 f(S),\; F(S)=S^2\frac{(1+ N_g (1-S)^2)}{S^2+ M (1-S)^2},
$$ 
where $M=\frac{\mu_w}{\mu_n}$ denotes the viscosity ratio. 
A plot is shown in \Cref{2PF_fig:fhF}. Some elementary calculations give
\begin{itemize}
\item[$(a)$] \emph{Monotonicity:} If $N_g\leq M$ then $F'(S)>0$ for all $0<S<1$ and if $N_g>M$ 
then there exists a unique $S_F\in (0,1)$ such that $F'(S)>0$ for all $0<S<S_F$ and $F'(S)<0$ for $S_F<S<1$. Since $F(1)=1$, clearly $F(S_F)>1$.
\item[$(b)$] \emph{Inflection points:} $f(S)$ has only one inflection point in $(0,1)$ 
whereas, $F(S)$ has at most two. To see this for $f(S)$, note that $f''(S)=P(S)Q(S)$ with $Q(S)$ being a positive function 
and $P(S)=M -(3M+3)S^2 + (2M+2)S^3$. Since $P(0)=M$, $P(1)=-1$ and $P'(S)<0$ for $S\in (0,1)$, the result follows.
\end{itemize}
These properties of $f$ and $F$ will be used when discussing the different cases of travelling waves.
\end{remark}

\begin{figure}[H]
\begin{center}
\includegraphics[width=0.5\textwidth]{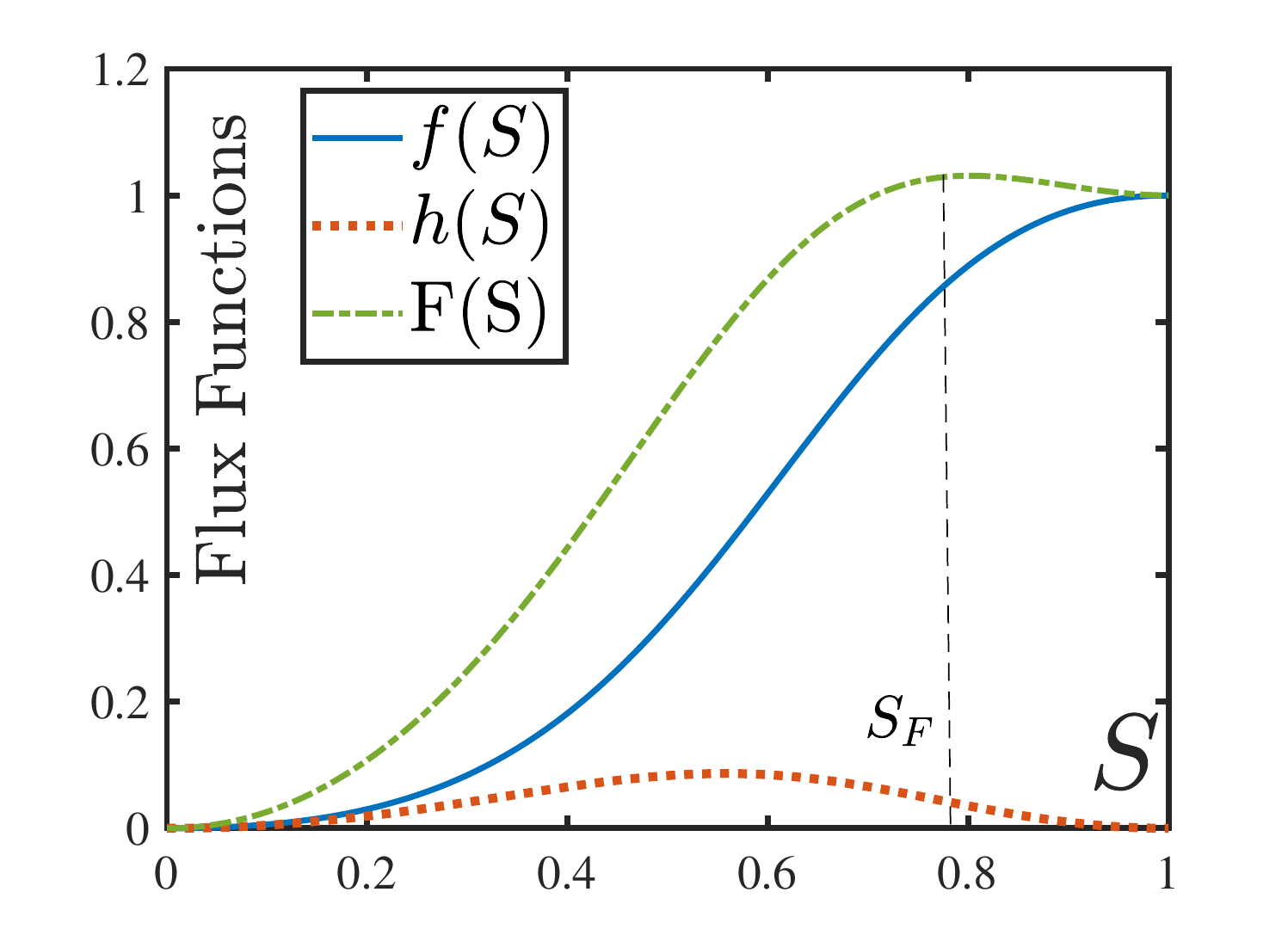}
\end{center}
\caption{\label{2PF_fig:fhF} The functions $f(S),\;h(S)$ and $F(S)$ as given in Remark \ref{2PF_rem:fhF}. Here $M=2$ and $N_g=4$.}
\end{figure}

\subsection{Travelling wave formulation}
\label{2PF_sec:TWderivation}

Having derived the non-dimensional hysteretic two-phase flow System $(\tilde{\mathcal{P}})$, we investigate under 
which conditions travelling wave solutions exist. These are solutions of the form
$$
S(z,t)=S(\xi),\; p(z,t)=p(\xi),\; \text{ with } \xi=ct-z,\label{2PF_eq:ansatz}
$$
where $S$ and $p$ are the wave profiles of saturation and pressure and $c\in \R$ the wave-speed. We seek travelling waves that satisfy
\begin{align}
\begin{cases}
\lim\limits_{\xi\to -\infty}S(\xi)=S_B,\; \lim\limits_{\xi\to \infty}S(\xi)=S_T,\\
\lim\limits_{\xi\to -\infty}p'(\xi)=\lim\limits_{\xi\to \infty}p'(\xi)=0,\label{2PF_eq:BC1}
\end{cases}
\end{align}
where $S_B$ corresponds to an `initial' saturation and $S_T$ to the injected saturation. The choice of $p'(\pm\infty)=0$ ensures that 
the diffusive flux vanishes at $\xi=\pm\infty$.
Substituting
\eqref{2PF_eq:BC1} into \eqref{2PF_eq:TE2} and \eqref{2PF_eq:hyst_psi2}, and integrating \eqref{2PF_eq:TE2} one obtains
\begin{subequations}\label{2PF_eq:ODEsys}
\begin{align}
\label{2PF_eq:ODES2}
&cS - \left( F(S,p) - h(S,p)p^\prime \right) = A, \\
\label{2PF_eq:ODES1}
&cS^\prime =  \frac{1}{\tau} \mathcal{F}\left( S, p \right), 
\end{align}
\end{subequations}
where $\xi \in \mathbb{R}$ and $A$ is a constant of integration. 

As was shown in \cite{VANDUIJN2018232} for the Richards equation, \eqref{2PF_eq:BC1} and \eqref{2PF_eq:ODEsys} do not automatically 
guarantee the existence of $\lim_{\xi\to \pm\infty} p(\xi)$. But if $p(\pm\infty)$ is well-defined then \eqref{2PF_eq:ODES1} and the 
existence of $S(\pm\infty)$ forces $\lim_{\xi\to \pm\infty}\F(S(\xi),p(\xi))=0$. Recalling that $\F(S,p)=0$ iff $(S,p)\in\H$ we then have 
\begin{equation*}
\lim\limits_{\xi\to -\infty}p(\xi)=p_B\in [\Pim(S_B),\Pdr(S_B)],\; \lim\limits_{\xi\to \infty}p(\xi)=p_T\in [\Pim(S_T),\Pdr(S_T)].
\end{equation*}
We show later that $p_B$, interpreted as the initial pressure,  can sometimes be chosen independently, whereas, $p_T$, when existing, 
is always fixed by the choice of $S_B,S_T$ and $p_B$.
Following the steps in \cite{van2013travelling,VANDUIJN2018232,mitra2018wetting}  we obtain the Rankine-Hugoniot condition for wave-speed $c$, i.e.
\begin{equation}
c = \frac{F(S_T,p_T) - F(S_B,p_B)}{S_T-S_B}. \label{2PF_eq:WavespeedRH}
\end{equation}
With this, system \eqref{2PF_eq:ODEsys} can be rewritten as a dynamical system,
\begin{subequations}\label{2PF_eq:ODE}
\begin{numcases}{\mathrm{(TW)}}
&$S^\prime = \dfrac{1}{c\tau} \mathcal{F} \left( S, p \right)$, \label{2PF_eq:ODES} \\[.5em]
&$p^\prime = \mathcal{G}\left(S,p \right)$.\label{2PF_eq:ODEp}
\end{numcases}
\end{subequations}
where
\begin{equation}
\mathcal{G}\left(S,p\right) := \frac{  F(S,p)- \ell(S)  }{h\left( S,p \right)} \text{ with } \ell(S):=F(S_B,p_B) +c(S-S_B).\label{2PF_eq:DefG}
\end{equation}
Note that when $F$ is non-monotone (e.g. $N_g>M$ in \Cref{2PF_rem:fhF}), the wave-speed $c$ can be positive or 
negative depending on the values of $S_B$ and $S_T$.

We study all possible solutions of system \hyperref[2PF_eq:ODE]{(TW)} for $\t>0$. 
They serve as viscous profiles of admissible shocks of the limiting Buckley-Leverett equation.
Existence conditions for solutions of \hyperref[2PF_eq:ODE]{(TW)} act as admissibility/entropy conditions for the corresponding shocks.

\noindent
The solutions of \hyperref[2PF_eq:ODE]{(TW)} are investigated under three different scenarios.
\begin{itemize}
\item[A:] No hysteresis in relative permeabilities, i.e. $\zeta^{(i)}=\zeta^{(d)}$ for $\zeta\in \{f,h\}$. 
Furthermore, $N_g$ is sufficiently small so that $F$ satisfies properties stated for $f^{(j)}$ in \ref{2PF_P:f}. 
For $F$ as in Remark \ref{2PF_rem:fhF} this is satisfied if $N_g\leq M$. 
 \item[B:]  $N_g$ and $\t$ sufficiently small; relative permeabilities are hysteretic. 
\end{itemize}
A third scenario where $N_g$ is large so that $F$ is non-monotone is discussed briefly at the end of \Cref{2PF_sec:CaseA}.

\section{No  relative permeability hysteresis and small $N_g$ (Scenario A)}
\label{2PF_sec:CaseA}
In the absence of relative permeability hysteresis, the functions $f,\;h,\;F$ and $\Gf$ depend on $S$ only.  
We explicitly state the properties of $F$ as a result of \ref{2PF_P:f}, \ref{2PF_P:h} and \Cref{2PF_rem:fhF}.

\begin{enumerate}[label=(A\theTPFassumption)]
 \item $F\in C^2\left([0,1]\right),\;{F}^\prime(S)>0$ for $0<S < 1,\;F(0) = 0,\;F\left( 1 \right) = 1$. 
 Moreover, a unique $S_o\in (0,1)$ exists such that \label{2PF_ass:F1}
$$
F''(S_o)=0,\; F''(S)>0 \text{ for } 0<S<S_o \text{ and } F''(S)<0 \text{ for } S_o<S<1.
$$ \stepcounter{TPFassumption}
\end{enumerate}

\begin{figure}[H]
\centering
\includegraphics[width=0.95\textwidth]{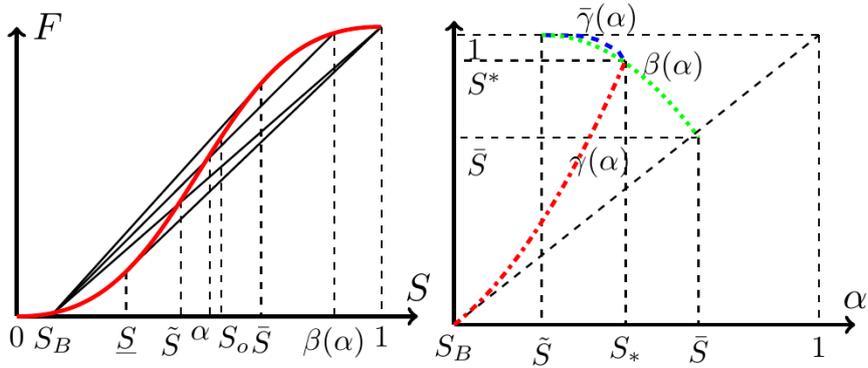}
\caption{(left) The saturations $S_B,\;\underline{S},\;\bar{S},\;\tilde{S},\;\a$ and $\b(\a)$ for Scenario A. (right) 
The functions $\b$, $\g$ and $\bar{\g}$ (assuming \eqref{2PF_eq:GfnotIntegrableAt1}) and the definitions of $S_*$ and $S^*$ 
for $S_B<\underline{S}$.}\label{2PF_fig:Sdefs}
\end{figure}

\subsection{Preliminaries}
\label{2PF_sec:prelim}
Throughout this paper we restrict ourselves to relatively small values of $S_B$. Specifically, we assume
\begin{equation}
0<S_B<S_o.\label{2PF_eq:SbSt}
\end{equation}

First let us take $S_B\leq \underline{S}$, where $\underline{S}$ is the saturation at which 
$F'(\underline{S})=\frac{1-F(\underline{S})}{1-\underline{S}}$. The convex-concave behaviour of $F$ implies $\underline{S}<S_o$.
For later purpose, and with reference to \Cref{2PF_fig:Sdefs} (left), we introduce the additional saturations 
$S_B<\tilde{S}<\bar{S}<1$, where $\tilde{S}$ is the saturation at which $F(S)$ intersects the line connecting 
$(S_B,F(S_B))$ and $(1,1)$, and where $\bar{S}$ is the saturation for which $F'(\bar{S})=\frac{F(\bar{S})-F(S_B)}{\bar{S}-S_B}$.  
Then to each $\a\in [\tilde{S},\bar{S}]$ corresponds a unique $\b\in [\bar{S},1]$ such that $(\b,F(\b))$ is the third 
intersection point between the graph of $F$ and the chord through $(S_B,F(S_B))$ and $(\a,F(\a))$, see \Cref{2PF_fig:Sdefs} (left). 
This defines the function 
\begin{align}
\begin{cases}
\b: [\tilde{S},\bar{S}]\to [\bar{S},1],\; \b(\bar{S})=\bar{S},\; \b(\tilde{S})=1,\\
\b(\a) \text{ is strictly decreasing}. 
\end{cases}
\end{align}
Later in this section a second function $\g=\g(\a)$ is introduced as one of the roots of the equation
\begin{equation}
\int_{S_B}^{\g(\a)}\Gf(S;S_B,\a)dS=0 \text{ for } S_B\leq \a\leq \bar{S}.\label{2PF_eq:DefGamma}
\end{equation}
Here $\Gf(S;S_B,\a)$ is the expanded notation of $\Gf$ from \eqref{2PF_eq:DefG} for the $p$ independent case:
$$ \Gf(S;S_B,\a)=\tfrac{F(S)-\ell(S;S_B,\a)}{h(S)} \text{ with } \ell(S;S_B,\a)=F(S_B)+ \tfrac{F(\a)-F(S_B)}{\a-S_B}(S-S_B).$$ 
A typical sketch of $\Gf(S;S_B,\a)$ for different values of $\a$ is shown in \Cref{2PF_fig:Gf}. Note that
\begin{align}
\Gf(S;S_B,\a) \text{ decreases with respect to } \a\in [S_B,\bar{S}] \text{ and }\label{2PF_eq:GfisAlphaDecreasing}\\
\Gf(S;S_B,\a)
\begin{cases}
\begin{rcases}
<0 &\text{ for } S_B<S<\a\\
>0 &\text{ for } S>\a
\end{rcases} &\text{ when } S_B<\a<\tilde{S},\\
\begin{rcases}
<0 &\text{ for } S_B<S<\a\\
>0 &\text{ for } \a<S<\b(\a)\\
<0 &\text{ for } \b(\a)<S<1
\end{rcases} &\text{ when } \tilde{S}<\a<\bar{S}.\\
\end{cases}\label{2PF_eq:PropGfsign}
\end{align}
Since,
\begin{align}
\Gf(S;S_B,\a)=\begin{cases}
\mathcal{O}(\tfrac{1}{k_{rn}(S)}) &\text{ when } \a\not= \tilde{S},\\[.2em]
\mathcal{O}(\tfrac{1-S}{k_{rn}(S)}) &\text{ when } \a= \tilde{S},
\end{cases}\label{2PF_eq:GfBehaveNear1}
\end{align}
as $S\nearrow 1$, we have for most practical applications 
\begin{align}
\Gf(S;S_B,\a) \text{ is non-integrable near $S=1$ for each } S_B\leq \a\leq \bar{S}.\label{2PF_eq:GfnotIntegrableAt1}
\end{align}
This is the case for Brooks-Corey permeabilities with $q\geq 2$, see \Cref{2PF_rem:fhF}.
\begin{figure}[H]
\begin{center}
\includegraphics[width=0.95\textwidth]{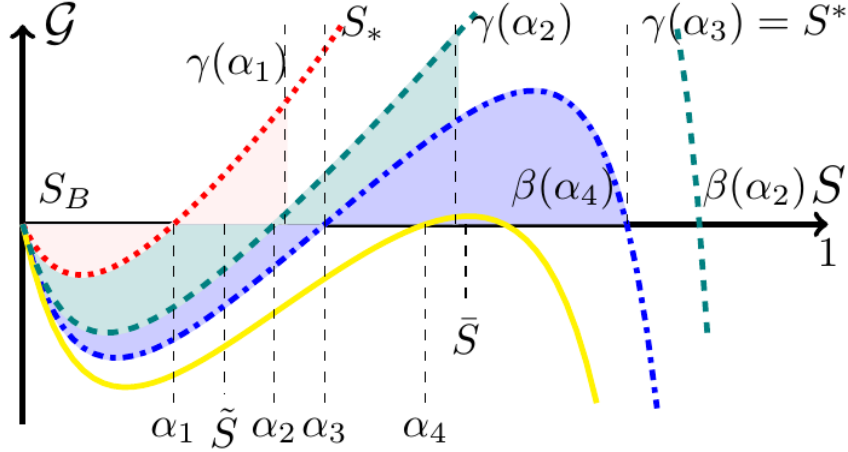}
\caption{The plot of $\Gf(S;S_B,\a)$ for different values of $\a$. Here $\a_1<\tilde{S}<\a_2<\a_3=S_*<\alpha_4<\bar{S}<S^*$. 
Values of $\g$ and $\b$ are also shown. Note that, $\b(\a_1)$ and $\g(\a_4)$ do not exist.}\label{2PF_fig:Gf}
\end{center}
\end{figure}

Returning to equation \eqref{2PF_eq:DefGamma}, we note that $\g=S_B$ is the trivial solution. Properties 
\eqref{2PF_eq:PropGfsign} and \eqref{2PF_eq:GfBehaveNear1} imply the existence of a second (non-trivial) 
solution $\g=\g(\a)$ for $\a\geq S_B$. It satisfies $\g(S_B)=S_B$, $\g(\a)$ increases, $\g(\a)>\a$ for $\a>S_B$. 
Moreover, if \eqref{2PF_eq:GfnotIntegrableAt1} is satisfied then $\g(\tilde{S})<1$. This shows the existence of 
$\g(\a)$ in a right neighbourhood of $S=\tilde{S}$.  The solution in this case exists up to $\a=S_*\in (\tilde{S},\bar{S})$ 
where $\g(\a)$ and $\b(\a)$ intersect: $\g(S_*)=\b(S_*)=:S^*$. Further, if  
\eqref{2PF_eq:GfnotIntegrableAt1} holds, then a third solution $\g=\bar{\g}$ exists for $\tilde{S}<\a<S_*$. 
It decreases in $\a$ with $\bar{\gamma}(\tilde{S})=1$ and $\bar{\gamma}(S_*)=S^*$. When  
\eqref{2PF_eq:GfnotIntegrableAt1} is not satisfied, the existence of $S_*$ and a third solution depends on the specific form of 
$k_{rn}(S)$. The solutions of \eqref{2PF_eq:DefGamma} and the function $\b(\a)$ are sketched in \Cref{2PF_fig:Sdefs} (right). 

For $S_B\in (\underline{S},S_o)$, $\b(\a)$ and $\g(\a)$ can similarly be defined, although the domain where $\b(\a)$ is defined 
is different. In this case the intersection of $\b(\a)$ and the second solution $\g(\a)$ is guaranteed irrespective 
of \eqref{2PF_eq:GfnotIntegrableAt1} since $\int^{\bar{S}}_{S_B} \Gf(S;S_B,\bar{S})dS<0$ and  $\int^{1}_{S_B} \Gf(S;S_B,1)dS>0$. 
This is because $\Gf(S;S_B,\bar{S})<0$ for $S_B<S<\bar{S}$ and $\Gf(S;S_B,1)>0$ for $S_B<S<1$.
Since we use the second solution $\g=\g(\a)$ only, we summarize its properties in the following proposition.

\begin{proposition}
Assume either \eqref{2PF_eq:GfnotIntegrableAt1} or $S_B\in(\underline{S},S_o)$. 
Let $\g$ be the increasing (unique) solution  of \eqref{2PF_eq:DefGamma}. Then it is defined in the interval $[S_B,S_*]$ 
where $S_*\in (\tilde{S},\bar{S})$ is such that $\g(S_*)=\b(S_*)=: S^*$. Further, $\g(S_B)=S_B$, $\g(\a)>\a$ for $\a>S_B$ 
and $\g(\a)<\b(\a)$ for $\a<S_*$ in the common domain of definition of $\b$ and $\g$.\label{2PF_prop:PropertiesGammaFunc}
\end{proposition}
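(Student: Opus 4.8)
\emph{Proof idea.} For $\alpha$ in the relevant interval introduce
$$\Phi(\alpha,\eta):=\int_{S_B}^{\eta}\Gf(S;S_B,\alpha)\,dS ,$$
so that by definition $\gamma(\alpha)$ is the non-trivial zero of $\eta\mapsto\Phi(\alpha,\eta)$. The plan is to locate this zero precisely using the sign, monotonicity and growth properties \eqref{2PF_eq:GfisAlphaDecreasing}--\eqref{2PF_eq:GfnotIntegrableAt1}, and then to read off each assertion. Fix $\alpha>S_B$. By \eqref{2PF_eq:PropGfsign}, $\Gf(\cdot;S_B,\alpha)<0$ on $(S_B,\alpha)$, so $\Phi(\alpha,\cdot)$ strictly decreases there and $\Phi(\alpha,\alpha)<0$; on $(\alpha,\beta(\alpha))$ (respectively on $(\alpha,1)$ when $\alpha<\tilde S$, where \eqref{2PF_eq:PropGfsign} gives no further sign change) we have $\Gf>0$, so $\Phi(\alpha,\cdot)$ strictly increases, and for $\alpha<\tilde S$ it tends to $+\infty$ as $\eta\nearrow1$ by \eqref{2PF_eq:GfBehaveNear1}--\eqref{2PF_eq:GfnotIntegrableAt1}. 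Hence the smallest zero of $\Phi(\alpha,\cdot)$ above $S_B$ lies in $(\alpha,\beta(\alpha))$ (in $(\alpha,1)$ if $\alpha<\tilde S$), is unique there by strict monotonicity, and is what we call $\gamma(\alpha)$; this gives at once $\gamma(\alpha)>\alpha$ and $\gamma(\alpha)\le\beta(\alpha)$, with equality iff $\Phi(\alpha,\beta(\alpha))=0$. It also identifies $\gamma$ as the unique \emph{increasing} branch: the other branches are the trivial one $\equiv S_B$ and, when the decreasing tail of $\Phi(\alpha,\cdot)$ on $(\beta(\alpha),1)$ runs off to $-\infty$, the branch $\bar\gamma(\alpha)\in(\beta(\alpha),1)$, which is decreasing.

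For the monotonicity of $\gamma$, note that $\Gf$ is $C^1$ in $(S,\alpha)$ on the relevant region and strictly decreasing in $\alpha$ by \eqref{2PF_eq:GfisAlphaDecreasing}, so $\partial_\alpha\Phi<0$, while $\partial_\eta\Phi(\alpha,\gamma(\alpha))=\Gf(\gamma(\alpha);S_B,\alpha)>0$ because $\gamma(\alpha)$ lies in the open interval where $\Gf>0$. The implicit function theorem then yields $\gamma\in C^1$ with $\gamma'(\alpha)=-\,\partial_\alpha\Phi/\partial_\eta\Phi>0$.

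Next the interval of definition and the point $S_*$. Put $\Psi(\alpha):=\Phi(\alpha,\beta(\alpha))$ on the domain of $\beta$. By the previous paragraph, $\gamma(\alpha)$ exists iff $\Psi(\alpha)\ge0$, and $\gamma(\alpha)<\beta(\alpha)$ iff $\Psi(\alpha)>0$. Differentiating and using that $\beta(\alpha)$ is a zero of $\Gf(\cdot;S_B,\alpha)$ — so the term containing $\beta'(\alpha)$ drops — one obtains $\Psi'(\alpha)=\partial_\alpha\Phi(\alpha,\beta(\alpha))<0$, i.e. $\Psi$ is strictly decreasing. At the endpoint where $\beta=1$ (namely $\alpha=\tilde S$ under hypothesis \eqref{2PF_eq:GfnotIntegrableAt1}, or the corresponding endpoint when $S_B\in(\underline S,S_o)$) one has $\Psi>0$: in the first case $\Phi(\tilde S,\eta)\to+\infty$ while $\Phi(\tilde S,\tilde S)<0$; in the second case $\Gf(\cdot;S_B,1)>0$ on $(S_B,1)$, as noted before the statement, so $\int_{S_B}^{1}\Gf(S;S_B,1)\,dS>0$. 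At $\alpha=\bar S$ we have $\beta(\bar S)=\bar S$ and $\Gf(\cdot;S_B,\bar S)<0$ on $(S_B,\bar S)$, hence $\Psi(\bar S)<0$. Strict monotonicity of $\Psi$ now produces a unique $S_*\in(\tilde S,\bar S)$ with $\Psi(S_*)=0$, i.e. $\gamma(S_*)=\beta(S_*)=:S^*$, and $\Psi>0$ — so $\gamma(\alpha)<\beta(\alpha)$ — on the part of the common domain with $\alpha<S_*$; together with the first paragraph (which covers $\alpha\in[S_B,\tilde S]$ via the $(\alpha,1)$ alternative) this shows that $\gamma$ is defined exactly on $[S_B,S_*]$.

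Finally $\gamma(S_B)=S_B$: as $\alpha\downarrow S_B$ the chord $\ell(\cdot;S_B,\alpha)$ converges to the tangent line $T$ of $F$ at $S_B$ and $\beta(\alpha)$ converges to $\beta(S_B)$, the re-intersection of $T$ with the graph of $F$ (or to $1$ if $T$ does not meet $F$ again). Since $F$ is convex on $(0,S_o)$ and $S_B<S_o$ by \eqref{2PF_eq:SbSt}, one checks $F(S)-T(S)>0$ on $(S_B,\beta(S_B))$, so $\Phi(S_B,\eta)>0$ there and the only zero of $\Phi(S_B,\cdot)$ on $[S_B,\beta(S_B)]$ is $\eta=S_B$; since $\gamma$ is monotone with $\gamma(\alpha)\in(\alpha,\beta(\alpha))$, passing to the limit gives $\gamma(S_B)=S_B$. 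I expect the main obstacle to be the careful handling of the improper integral near $S=1$: one must verify in \emph{both} admissible cases that the ``$\beta=1$'' endpoint value of $\Psi$ is positive and that the relevant tail of $\Phi(\alpha,\cdot)$ diverges with the correct sign; this, combined with the cancellation $\Gf(\beta(\alpha);S_B,\alpha)=0$ that forces $\Psi'<0$, is precisely what pins down $S_*$. A secondary technical point is identifying $\beta(S_B)$ and the convexity estimate $F-T>0$ needed for $\gamma(S_B)=S_B$.
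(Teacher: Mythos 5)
Your argument is correct and follows essentially the same route as the paper, which establishes \Cref{2PF_prop:PropertiesGammaFunc} informally in the paragraphs preceding it using exactly the ingredients you invoke: the sign structure \eqref{2PF_eq:PropGfsign}, the divergence \eqref{2PF_eq:GfBehaveNear1}--\eqref{2PF_eq:GfnotIntegrableAt1}, the monotonicity \eqref{2PF_eq:GfisAlphaDecreasing}, and the two integral inequalities $\int_{S_B}^{\bar S}\Gf(\cdot;S_B,\bar S)<0$, $\int_{S_B}^{1}\Gf(\cdot;S_B,1)>0$ for the case $S_B\in(\underline S,S_o)$. Your write-up is in fact more detailed (the implicit-function-theorem step and the auxiliary function $\Psi(\a)=\Phi(\a,\b(\a))$ make precise what the paper only asserts); just beware that your $\Phi$ has the opposite sign to the $\Phi$ of \eqref{2PF_eq:WlowerBoundDefPhi}, so the notation should be renamed if merged into the text.
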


\begin{remark}
For simplicity, we assume \eqref{2PF_eq:GfnotIntegrableAt1} for the rest of the discussion. 
This guarantees the existence of a $(S_*,S^*)$ pair. The methods presented in this paper can also be applied to analyse 
the case when $\b(\a)$ and $\g(\a)$ are not intersecting. The results are briefly discussed  in \Cref{2PF_sec:Ap1}.\label{2PF_rem:BetaGamma}
\end{remark}

Next we turn to system \hyperref[2PF_eq:ODE]{(TW)} where, for the time being, we take
\begin{equation}
S_B<S_T\leq \bar{S}.\label{2PF_eq:SbSt2}
\end{equation}
Since \hyperref[2PF_eq:ODE]{(TW)} is autonomous, it is convenient to represent solutions as orbits in the $(S,p)$-plane, 
or rather, in the strip $\{(S,p): 0\leq S\leq 1,\; p\in\R\}$. Moreover, orbits are same for any shift in the independent variable $\xi$. 
Therefore we may set without loss of generality, see  \cite{VANDUIJN2018232,mitra2018wetting},
\begin{equation}
S(0)=\frac{1}{2}(S_B+S_T).\label{2PF_eq:DefOfzeta0}
\end{equation}
\begin{figure}[H]
\begin{center}
\includegraphics[width=0.75\textwidth]{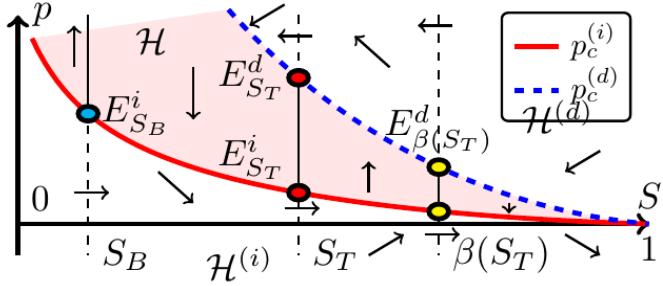}
\caption{The $S$-$p$ phase plane and the direction of orbits for  Scenario A with $\tilde{S}\leq S_T\leq \bar{S}$. 
The regions $\H,\;\Him,\;\Hdr$ and the equilibrium lines are marked. }\label{2PF_fig:OrbitDirection}
\end{center}
\end{figure}
Equilibrium points of \hyperref[2PF_eq:ODE]{(TW)} are
\begin{align*}
&E_{K}^{j}\equiv (K,p^{(j)}_c(K)), \text{ where } K\in \{S_B,S_T\} \text{ and } j\in \{i,d\}.
\end{align*}
If $\tilde{S}\leq S_T<\bar{S}$, a third pair exists for $K=\b(S_T)$. The points $E^j_K$ and the direction of the orbits
are indicated in \Cref{2PF_fig:OrbitDirection}. By the special nature of the function $\mathcal{F}$, we have in fact that all 
points of the segments $\overline{E^i_K E^d_K}$ are equilibrium points. Boundary conditions \eqref{2PF_eq:BC1} are satisfied 
if an orbit connects the segments $\overline{E^i_{S_B} E^d_{S_B}}$ and $\overline{E^i_{S_T} E^d_{S_T}}$. 
As shown in \cite{mitra2018wetting}, an orbit can leave $\overline{E^i_{S_B} E^i_{S_B}}$ only from the lowest point $E^i_{S_B}$. 
Then it enters region $\Him$ where it moves monotonically with respect to $S$ as a consequence of the sign in the right hand 
side of equation \eqref{2PF_eq:ODES}: if $p<\Pim(S)$ we have $S'>0$.

Due to this monotonicity one can alternatively describe an orbit leaving $E^i_{S_B}$ as a function of the 
saturation as long as it belongs to $\Him$. For given $\t>0$ and $S_T$ satisfying \eqref{2PF_eq:SbSt2}, let $w(S)=w(S;\t,S_T)$ 
denote this function. Then
\begin{subequations}
\begin{align}
&w(S_B;\t,S_T)=\Pim(S_B)\label{2PF_eq:WinitialValue}\\
&\text{ and } w(S;\t,S_T)<\Pim(S) \text{ in a right neighbourhood of } S_B \label{2PF_eq:WbelowPim}.
\end{align}
\end{subequations}
As in \cite{VANDUIJN2018232,mitra2018wetting}, we deduce from \hyperref[2PF_eq:ODE]{(TW)} that $w$ should satisfy
\begin{equation}
 w'(S;\t, S_T)=\frac{c\t\Gf(S;S_B,S_T)}{\Pim(S)-w(S;\t, S_T)} \text{ for } S>S_B.\label{2PF_eq:fWtauGDE}
 \end{equation}
Using techniques from \cite{VANDUIJN2018232,mitra2018wetting}, one can show that initial value problem 
\eqref{2PF_eq:fWtauGDE}, \eqref{2PF_eq:WinitialValue} has a unique local solution $w(S;\t, S_T)$  that satisfies \eqref{2PF_eq:WbelowPim}.
\begin{remark}
Conversely one recovers the orbit $(S(\xi),p(\xi))$ by substituting $w$ into \eqref{2PF_eq:ODES}. Using \eqref{2PF_eq:DefOfzeta0} this gives
$$
\xi=c\t \int^{S(\xi)}_{\tfrac{1}{2}(S_B+S_T)} \frac{d\vr}{\Pim(\vr)-w(\vr;\t,S_T)} d\vr \text{ and } p(\xi)=w(S(\xi);\t,S_T).
$$
\end{remark}
Rewriting \eqref{2PF_eq:fWtauGDE} as
$$
(\Pim-w)(w-\Pim)'+ (\Pim-w)\Pim'=c\t \Gf(S;S_B,S_T),
$$
we find recalling \ref{2PF_P:Pc} that
\begin{equation}
((\Pim-w)^2)'=2(\Pim-w) \Pim'- 2c\t \Gf(S;S_B,S_T)\leq -2c\t \Gf(S;S_B,S_T) \text{ in } \{w<\Pim\}.\label{2PF_eq:VboundPointWise}
\end{equation}
Integrating this inequality from $S_B$ to $S$ gives the lower bound
\begin{align}
\begin{rcases}
&w(S;\t,S_T)>\Pim(S)-\sqrt{2c\t \Phi(S)} \text{ in } \{w<\Pim\},\\
&\text{ where } \Phi(S)=\Phi(S;S_B,S_T):=-\int_{S_B}^S \Gf(\vr;S_B,S_T)d\vr.
\end{rcases}\label{2PF_eq:WlowerBoundDefPhi}
\end{align}
With $S_T$ satisfying \eqref{2PF_eq:SbSt2}, properties \eqref{2PF_eq:PropGfsign}-\eqref{2PF_eq:GfnotIntegrableAt1} 
and \Cref{2PF_prop:PropertiesGammaFunc} imply
\begin{subequations}\label{2PF_eq:PropOfPhi}
\begin{align}
\begin{rcases}
&\Phi(S)>0 \text{ for } S_B<S<\g(S_T)\\
&\Phi(S_B)=\Phi(\g(S_T))=0
\end{rcases} &\text{ when } S_B<S_T\leq S_*,\label{2PF_eq:PropOfPhi1}\\
\text{ and }
\begin{rcases}
&\Phi(S)>0 \text{ for } S_B<S<1\\
&\lim\limits_{S\nearrow 1}\Phi(S)=+\infty
\end{rcases} &\text{ when } S_*<S_T\leq \bar{S}.\label{2PF_eq:PropOfPhi2}
\end{align}
\end{subequations}

Observe that, depending on $S_B$, $S_T$ and $\t$, the interval where $w(S)<\Pim(S)$ is either $(S_B,1]$ if $w(S)$ and $\Pim(S)$ do not intersect, or $(S_B,S_i)$ with $S_i\leq 1$ in case there is an 
intersection at $S=S_i$. In the latter case, it follows immediately from
\eqref{2PF_eq:VboundPointWise} that we must have,
\begin{proposition}
Suppose there exists $S_i\in (S_B,1)$ such that $w(S)< \Pim(S)$ for $S_B<S<S_i$ and $w(S_i)=\Pim(S_i)$. 
Then $\Gf(S_i;S_B,S_T)\geq 0$.\label{2PF_prop:WequalsPimWhenGfpos}
\end{proposition}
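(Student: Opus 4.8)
The plan is to reduce the statement to a one-sided derivative sign comparison at the contact point $S_i$ for the function $V(S):=(\Pim(S)-w(S))^2$. On $(S_B,S_i)$ the orbit lies strictly in $\Him$, so there $w$ is the $C^1$ solution of \eqref{2PF_eq:fWtauGDE}, $V$ is $C^1$ and strictly positive, and $V(S_B)=0$ by \eqref{2PF_eq:WinitialValue}. Differentiating $V$ and inserting \eqref{2PF_eq:fWtauGDE} --- the computation already carried out for \eqref{2PF_eq:VboundPointWise} --- gives, for $S\in(S_B,S_i)$,
\[
V'(S)=2\big(\Pim(S)-w(S)\big){\Pim}'(S)-2c\t\,\Gf(S;S_B,S_T).
\]
Everything then comes down to passing to the limit $S\nearrow S_i$ in this identity and reading off the sign of $V'(S_i^-)$.

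First I would note that $w$ extends continuously to $S_i$ with $w(S_i)=\Pim(S_i)$, i.e. $V(S_i)=0$; this is the one point that needs care. It holds because $w$ is, by construction, the representation as a function of $S$ of the continuous orbit $(S(\xi),p(\xi))$ of \hyperref[2PF_eq:ODE]{(TW)} leaving $E^i_{S_B}$: while the orbit is in $\Him$ one has $S'>0$ by \eqref{2PF_eq:ODES}, so $\xi\mapsto S$ is invertible and $w(S)=p$, and since the orbit reaches $(S_i,\Pim(S_i))$ continuity forces $w(S)\to\Pim(S_i)$ as $S\nearrow S_i$. (Alternatively, \eqref{2PF_eq:WlowerBoundDefPhi} shows $V$ is bounded near $S_i$, hence Lipschitz there by the displayed identity, so $V$ has a limit, which must be $0$ since $w<\Pim$ on $(S_B,S_i)$.) With this in hand I would let $S\nearrow S_i$ in the identity: $\Pim(S)-w(S)\to0$, ${\Pim}'$ is bounded near $S_i$ by \ref{2PF_P:Pc}, and $\Gf(\,\cdot\,;S_B,S_T)$ is continuous at $S_i\in(0,1)$ because $h(S_i)>0$; hence the one-sided derivative $V'(S_i^-)$ exists and equals $-2c\t\,\Gf(S_i;S_B,S_T)$.

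It then remains to observe that $V'(S_i^-)\le0$: indeed $V(S_i)=0$ while $V(S)>0$ for $S_B<S<S_i$, so every difference quotient $\big(V(S_i)-V(S)\big)/(S_i-S)$ is nonpositive. Therefore $-2c\t\,\Gf(S_i;S_B,S_T)\le0$. Finally, in Scenario A the function $F$ is strictly increasing (by \ref{2PF_ass:F1}) and $S_B<S_T$, so the Rankine--Hugoniot speed $c=(F(S_T)-F(S_B))/(S_T-S_B)>0$, and $\t>0$; dividing by $2c\t>0$ gives $\Gf(S_i;S_B,S_T)\ge0$, as claimed. The only genuine obstacle in this plan is justifying $V(S_i)=0$, i.e. the continuous reachability of the curve $\{p=\Pim(S)\}$ along the orbit; granting that, the result is the elementary sign bookkeeping already visible in \eqref{2PF_eq:VboundPointWise}.
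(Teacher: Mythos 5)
Your proposal is correct and is exactly the argument the paper intends: the paper states that the proposition ``follows immediately from'' the inequality \eqref{2PF_eq:VboundPointWise}, and your sign comparison of the one-sided derivative of $V=(\Pim-w)^2$ at the contact point $S_i$, together with $c>0$ from \ref{2PF_ass:F1} and $\t>0$, is precisely that immediate deduction spelled out. The extra care you take with the continuity of $w$ up to $S_i$ is a reasonable elaboration of a step the paper leaves implicit.
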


Hence, if the orbit exits through the capillary pressure curve $\Pim$, it can only do so at points where $\Gf\geq 0$. 
From the discussion above, one defines
\begin{equation}
S_m(\t,S_T)=\sup\{S\in (S_B,1): w(\vr;\t,S_T)<\Pim(\vr) \text{ for all } S_B<\vr<S\},
\end{equation}
which is the upper limit of the interval on which $w$ exists. Then we have 
 \begin{proposition}
\begin{itemize}
\item[(a)]  If $S_B<S_T\leq S_*$, then $S_T\leq S_m(\t,S_T)<\g(S_T)$ for all $\t>0$;
\item[(b)] If $S_*<S_T\leq \bar{S}$ and $w(\b(S_T);\t,S_T)< \Pim(\b(S_T))$, then $S_m(\t,S_T)=1$ and $\lim\limits_{S\nearrow 1}w(S)=-\infty$.
\end{itemize}\label{2PF_prop:PropertiesOfSm}
 \end{proposition}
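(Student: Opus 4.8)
The plan is to track the orbit leaving $E^i_{S_B}$ through its graph function $w(\cdot)=w(\cdot\,;\t,S_T)$, the local solution of the initial value problem \eqref{2PF_eq:fWtauGDE}, \eqref{2PF_eq:WinitialValue}, exactly as in the Richards-equation analysis of \cite{VANDUIJN2018232,mitra2018wetting}. The three tools are: the sign pattern of $\Gf$ from \eqref{2PF_eq:PropGfsign}; the exit criterion of \Cref{2PF_prop:WequalsPimWhenGfpos} (the orbit can meet $\Pim$ only where $\Gf\ge0$); and the integrated form of the pointwise identity underlying \eqref{2PF_eq:VboundPointWise}, together with the lower bound \eqref{2PF_eq:WlowerBoundDefPhi}. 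Throughout one uses $c>0$, which holds here because $F$ is strictly increasing by \ref{2PF_ass:F1} and $S_B<S_T$.

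For part (a): the lower bound $S_T\le S_m$ follows because $\Gf(S;S_B,S_T)<0$ on $(S_B,S_T)$ by \eqref{2PF_eq:PropGfsign}, so by \Cref{2PF_prop:WequalsPimWhenGfpos} the orbit cannot reach $\Pim$ there, and by \eqref{2PF_eq:WlowerBoundDefPhi}, with $\Phi$ continuous hence bounded on the compact set $[S_B,S_T]$, $w$ cannot escape to $-\infty$ either; hence $w<\Pim$ on all of $(S_B,S_T)$. For the upper bound I argue by contradiction: if $S_m\ge\g(S_T)$ then $w<\Pim$ on $(S_B,\g(S_T))$ and $w$ is bounded there (again by \eqref{2PF_eq:WlowerBoundDefPhi} and boundedness of $\Phi$ on $[S_B,\g(S_T)]\subset(0,1)$, using $\g(S_T)\le S^*<1$), so $w\in C^1$ on this interval. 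Integrating the identity $\bigl((\Pim-w)^2\bigr)'=2(\Pim-w)\Pim'-2c\t\Gf$ over $[S_B,\g(S_T)]$, and using $(\Pim-w)(S_B)=0$ together with $\Phi(\g(S_T))=-\int_{S_B}^{\g(S_T)}\Gf=0$ — the defining property \eqref{2PF_eq:DefGamma} of $\g$, available since $S_T\le S_*$ — gives
$$(\Pim-w)^2(\g(S_T))=2\int_{S_B}^{\g(S_T)}(\Pim-w)\,\Pim'\,d\vr<0,$$
because the integrand is strictly negative on the open interval. This contradicts $(\Pim-w)^2\ge0$, so $S_m<\g(S_T)$.

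For part (b): the standing hypothesis $w(\b(S_T);\t,S_T)<\Pim(\b(S_T))$ places the orbit strictly interior to $\Him$ at $S=\b(S_T)$, hence $S_m>\b(S_T)$. On $(\b(S_T),1)$ one has $\Gf(S;S_B,S_T)<0$ by \eqref{2PF_eq:PropGfsign}, so \Cref{2PF_prop:WequalsPimWhenGfpos} forbids the orbit from ever reaching $\Pim$ on this interval, and $w$ cannot blow up at any interior $S_\infty\in(\b(S_T),1)$ either: there $\Gf$ stays bounded while $\Pim-w\to+\infty$, forcing $w'=c\t\Gf/(\Pim-w)\to0$, which precludes a finite-saturation singularity. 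Thus $w$ exists with $w<\Pim$ on all of $(S_B,1)$, i.e.\ $S_m=1$. Finally, $w'=c\t\Gf/(\Pim-w)<0$ on $(\b(S_T),1)$, so $w$ is decreasing; if it remained bounded it would converge to a finite limit and hence have finite total variation near $S=1$, i.e.\ $w'\in L^1$ there, whereas $|w'|=c\t|\Gf|/(\Pim-w)$ with $\Pim-w$ bounded above near $S=1$ and $|\Gf|$ non-integrable at $S=1$ by \eqref{2PF_eq:GfBehaveNear1}, \eqref{2PF_eq:GfnotIntegrableAt1}. This contradiction yields $w(S)\to-\infty$ as $S\nearrow1$.

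I expect the delicate step to be the near-$S=1$ analysis in part (b): one must simultaneously exclude a blow-up of $w$ at a finite saturation, exclude the orbit crossing $\Pim$, and show that the pressure actually diverges, all in the presence of the non-integrable singularity of $\Gf$ and while keeping the sign and boundedness of $\Pim-w$ under control. By comparison the upper bound in part (a) is essentially the single observation that the integrated identity, evaluated at the nontrivial zero $\g(S_T)$ of $\Phi$, forces $(\Pim-w)^2$ to turn negative unless the orbit has already left $\Him$.
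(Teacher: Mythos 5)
Your proof is correct and follows essentially the same route as the paper: the lower bound and the claim $S_m=1$ in (b) come from \Cref{2PF_prop:WequalsPimWhenGfpos} together with the sign pattern \eqref{2PF_eq:PropGfsign}, the upper bound in (a) comes from the integrated identity behind \eqref{2PF_eq:WlowerBoundDefPhi} evaluated at the nontrivial zero $\g(S_T)$ of $\Phi$, and the divergence $w\to-\infty$ in (b) comes from the non-integrability \eqref{2PF_eq:GfnotIntegrableAt1} of $\Gf$ near $S=1$ contradicting $w'\in L^1$. The extra details you supply (ruling out finite-saturation blow-up via the boundedness of $\Phi$ on compact subintervals) are left implicit in the paper but are consistent with its argument.
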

\begin{proof}
\emph{(a)} The lower bound follows from \Cref{2PF_prop:WequalsPimWhenGfpos}. 
To show the upper bound, observe that if $S_m(\t,S_T)\geq \g(S_T)$, then $w(\g(S_T))\leq  \Pim(\g(S_T))$. 
This directly contradicts the strict inequality in \eqref{2PF_eq:WlowerBoundDefPhi} since $\Phi(\g(S_T))=0$.  

\emph{(b)} Since $\Gf(\cdot;S_B,S_T)<0$ in $(\b(S_T),1)$, \Cref{2PF_prop:WequalsPimWhenGfpos} 
and \eqref{2PF_eq:WlowerBoundDefPhi}, \eqref{2PF_eq:PropOfPhi1} imply $S_m(\t,S_T)=1$. Since \eqref{2PF_eq:WlowerBoundDefPhi} 
holds for all $S<1$ and since $w'<0$ in a left neighbourhood of $S=1$, let us suppose that $\lim_{S\nearrow 1} w(S;\t,S_T)=-L$ ($L>0$). 
Then equation \eqref{2PF_eq:fWtauGDE} and property \eqref{2PF_eq:GfnotIntegrableAt1} give $w'\not\in L^1$ near $S=1$, 
contradicting the boundedness of $w$.\qed
\end{proof}
In \Cref{2PF_fig:Prop2Cases} we sketch the behaviour of $w(S;\t,S_T)$ in $\Him$. 
The existence of orbits as in \Cref{2PF_fig:Prop2Cases} (left) is a direct consequence of the behaviour of the 
lower bound \eqref{2PF_eq:WlowerBoundDefPhi}. Orbits as in \Cref{2PF_fig:Prop2Cases} (right)
 need more attention since the case $S_m(\t,S_T)<\b(S_T)$, represented by $\t_3$, remains to be discussed.  
 We make the behaviour as sketched in \Cref{2PF_fig:Prop2Cases} (right) precise in a number of steps. 
\begin{figure}[H]
\centering
\includegraphics[width=0.85\textwidth]{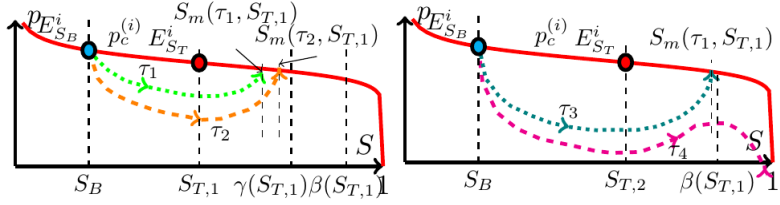}
\caption{Sketch of orbits represented by $w(S;\t,S_T)$. (left) $S_{T,1}\in (S_B,S_*]$, $\t_1<\t_2$; (right) 
$S_{T,2}\in (S_*,\bar{S}]$, $\t_3<\t_4$.}\label{2PF_fig:Prop2Cases}
\end{figure}
We start with the following
\begin{remark}
In the context of this section the wave-speed \eqref{2PF_eq:WavespeedRH} reduces to
$$
c=\frac{F(S_T)-F(S_B)}{S_T-S_B}.
$$
From assumption \ref{2PF_ass:F1} it follows that there is a one-to-one correspondence between $c$ and 
$S_T\in [S_B,\bar{S}]$. Writing $c=c(S_T)$, we have $c(S_B)=F'(S_B)$, $c(\bar{S})=F'(\bar{S})$ and 
$\frac{d c}{d S_T}>0$ in $[S_B,\bar{S}]$.\label{2PF_rem:CwithST}
\end{remark}

Next, we give a general monotonicity result.
\begin{proposition}[Monotonicity]
Let $S_B$ satisfy \eqref{2PF_eq:SbSt}.  
\begin{itemize}
\item[(a)]   For a fixed $S_T\in (S_B,\bar{S}]$ and any pair $0<\t_1<\t_2$,
$$ w(\cdot;\t_2,S_T)< w(\cdot;\t_1,S_T) \text{ in } \{w(\cdot;\t_1,S_T)<\Pim(\cdot)\}$$
and
$$ S_m(\t_1,S_T)<S_m(\t_2,S_T) \text{ if } S_T<S_m(\t_2,S_T)\leq \b(S_T);$$
\item[(b)] For fixed $\t>0$ and any pair $S_B<S_{T,1}<S_{T,2}\leq \bar{S}$,
$$ w(\cdot;\t,S_{T,2})< w(\cdot;\t,S_{T,1}) \text{ in } \{w(\cdot;\t,S_{T,1})<\Pim(\cdot)\}$$
and
$$ S_m(\t,S_{T,1})<S_m(\t,S_{T,2}) \text{ if } S_{T,2}\leq  S_m(\t,S_{T,2})\leq \b(S_{T,2}).$$
\end{itemize} 
\label{2PF_prop:Wtau}
\end{proposition}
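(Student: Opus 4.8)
I would work throughout with the deviation $V:=\Pim-w>0$ from the infiltration curve. Writing \eqref{2PF_eq:fWtauGDE} in this variable gives the equivalent singular initial value problem
\begin{equation*}
V'=\Pim'(S)-\frac{c\,\t\,\Gf(S;S_B,S_T)}{V},\qquad V(S_B)=0,\qquad V>0 \text{ on }(S_B,S_m(\t,S_T)).
\end{equation*}
The obstruction to a straightforward comparison is that the coefficient $c\,\t\,\Gf$ changes sign: by \eqref{2PF_eq:PropGfsign} it is negative on $(S_B,S_T)$ and positive on $(S_T,\b(S_T))$, so the right‑hand side of the equation for $w$ is \emph{not} monotone in $\t$ or in $S_T$, and one cannot invoke the usual ODE comparison principle directly. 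The plan is therefore to introduce a parameter–dependent rescaling of $V$ that restores monotonicity of the vector field in the parameter.

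For part (a) I would set $U:=V/\sqrt{\t}$; then $U(S_B)=0$, $U>0$ on $(S_B,S_m(\t,S_T))$, and
\begin{equation*}
U'=\frac{\Pim'(S)}{\sqrt{\t}}-\frac{c\,\Gf(S;S_B,S_T)}{U}.
\end{equation*}
Here $c$ and $\Gf$ do not depend on $\t$, and since $\Pim'<0$ by \ref{2PF_P:Pc}, the right‑hand side is \emph{strictly increasing} in $\t$ for fixed $(S,U)$ with $U>0$. First I would pin down the behaviour near $S_B$ as in \cite{VANDUIJN2018232,mitra2018wetting}: from $\Gf(S;S_B,S_T)=\tfrac{(F'(S_B)-c)(S-S_B)}{h(S_B)}+O((S-S_B)^2)$ one gets $V\sim a(\t)(S-S_B)$ with $a(\t)^2-\Pim'(S_B)\,a(\t)-\tfrac{c\,\t\,(c-F'(S_B))}{h(S_B)}=0$, where $c-F'(S_B)>0$ by \Cref{2PF_rem:CwithST}; a one‑line computation then shows $\t\mapsto a(\t)/\sqrt{\t}$ is strictly increasing, so $U(\cdot;\t_2,S_T)>U(\cdot;\t_1,S_T)$ on a right neighbourhood of $S_B$. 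Next, on any compact subinterval of $(S_B,S_m(\t_1,S_T))$ the right‑hand side is $C^1$ in $U$, so by the scalar differential‑inequality comparison $U(\cdot;\t_2,S_T)$ is a strict supersolution of the $\t_1$–equation with ordered data, whence $U(\cdot;\t_2,S_T)>U(\cdot;\t_1,S_T)>0$ throughout that interval. Unscaling, $V(\cdot;\t_2,S_T)=\sqrt{\t_2}\,U(\cdot;\t_2,S_T)>\sqrt{\t_1}\,U(\cdot;\t_1,S_T)=V(\cdot;\t_1,S_T)$, i.e.\ $w(\cdot;\t_2,S_T)<w(\cdot;\t_1,S_T)$ on $\{w(\cdot;\t_1,S_T)<\Pim\}=(S_B,S_m(\t_1,S_T))$; in particular $w(\cdot;\t_2,S_T)<\Pim$ there, so $S_m(\t_1,S_T)\le S_m(\t_2,S_T)$.

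Part (b) is handled identically with the rescaling $U:=V/\sqrt{c(S_T)}$, which turns \eqref{2PF_eq:fWtauGDE} into $U'=\tfrac{\Pim'(S)}{\sqrt{c(S_T)}}-\tfrac{\t\,\Gf(S;S_B,S_T)}{U}$. By \Cref{2PF_rem:CwithST} the wave speed $c(S_T)$ is strictly increasing in $S_T$, and by \eqref{2PF_eq:GfisAlphaDecreasing} $\Gf(S;S_B,S_T)$ is strictly decreasing in $S_T$; together with $\Pim'<0$ this makes the right‑hand side strictly increasing in the parameter $S_T$. The near‑$S_B$ seeding now uses that $U\sim\sqrt{2\t\,\Phi(\cdot;S_B,S_T)}$ and that $\Phi(\cdot;S_B,S_T)$ is strictly increasing in $S_T$ (again by \eqref{2PF_eq:GfisAlphaDecreasing}); the comparison step is verbatim, and unscaling with $\sqrt{c(S_{T,2})}>\sqrt{c(S_{T,1})}$ gives $w(\cdot;\t,S_{T,2})<w(\cdot;\t,S_{T,1})$ on $\{w(\cdot;\t,S_{T,1})<\Pim\}$ and $S_m(\t,S_{T,1})\le S_m(\t,S_{T,2})$.

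It remains to upgrade $\le$ to $<$ for the $S_m$'s under the stated hypotheses, and this is the only genuinely delicate point. Write $\bar S$ for the smaller exit point, $\bar S=S_m(\t_1,S_T)$ (resp.\ $S_m(\t,S_{T,1})$). Since $\Gf(\cdot;S_B,S_T)<0$ on $(S_B,S_T)$ the orbit is strictly decreasing there, so $\bar S>S_T$; combined with $\bar S\le S_m(\t_2,S_T)\le\b(S_T)$ (resp.\ $\le\b(S_{T,2})<\b(S_{T,1})$) this puts $\bar S\in(S_T,\b(S_T))$, hence $\Gf(\bar S;S_B,S_T)>0$ by \Cref{2PF_prop:WequalsPimWhenGfpos}. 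If the two exit points coincided, both rescaled solutions would vanish at $\bar S$; balancing $(U^2)'=\tfrac{2U\Pim'}{\sqrt{\t}}-2c\,\Gf$ near $\bar S$ shows the leading behaviour $U^2\sim 2c\,\Gf(\bar S)\,(\bar S-S)$ is the same for both, so the difference $U_2^2-U_1^2$ stays positive on $(S_B,\bar S)$ yet, by the sign of the $O((\bar S-S)^{3/2})$ correction term (which carries the opposite $\t$‑dependence, since $\Pim'<0$ and $1/\sqrt{\t_2}<1/\sqrt{\t_1}$), its derivative is positive just left of $\bar S$ — contradicting $U_2^2-U_1^2\to0$. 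Hence the larger orbit has not yet reached $\Pim$ at $\bar S$, i.e.\ $S_m(\t_1,S_T)<S_m(\t_2,S_T)$, and likewise in (b). The main obstacle, as indicated, is the non‑monotone dependence of the vector field on the parameters; once the rescalings $V/\sqrt{\t}$ and $V/\sqrt{c(S_T)}$ are in hand the argument is routine, the remaining work being the standard leading‑order analysis of $w$ at the degenerate endpoints $S_B$ and $\bar S$ carried out exactly as in \cite{VANDUIJN2018232,mitra2018wetting}.
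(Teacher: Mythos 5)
Your proposal follows essentially the same route as the paper's proof: the rescalings $V/\sqrt{\t}$ in (a) and $V/\sqrt{c(S_T)}$ in (b) coincide, up to a constant factor common to the two orbits being compared, with the paper's single substitution $u=(\Pim-w)/\sqrt{c(S_T)\t}$ from \eqref{2PF_eq:DefUtau}; the seeding of the ordering near $S_B$ via the quadratic for the slope is the same computation as the paper's evaluation of $u'(S_B)$ from \eqref{2PF_eq:Utau}; and the non-crossing step is the identical first-intersection argument. Two sub-steps deserve a closer look. First, in part (b) you justify the ordering near $S_B$ by ``$U\sim\sqrt{2\t\,\Phi}$'', but $\sqrt{2c\t\Phi}$ is only an \emph{upper} bound for $V$ (cf.\ \eqref{2PF_eq:WlowerBoundDefPhi}), not its leading-order behaviour --- the true slope at $S_B$ retains the $\Pim'$ contribution --- so ordering two upper bounds proves nothing; the fix is to run the same quadratic computation you did in (a), now tracking the $S_T$-dependence through both $c(S_T)$ and $\Gf'(S_B;S_B,S_T)$, which is exactly what the paper's inequality \eqref{2PF_eq:UtauAtSb2} does. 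Second, for the strictness $S_m(\t_1,S_T)<S_m(\t_2,S_T)$ you genuinely deviate: the paper integrates the $u$-equation globally over $(S_T,S_m)$ and uses $\Gf>0$ there together with $u_2>u_1$ to make both terms on the right of \eqref{2PF_eq:WhySmIsUnique} negative, whereas you perform a local expansion at the common exit point $\bar S$ with leading term $U^2\sim 2c\,\Gf(\bar S)(\bar S-S)$. Your version requires $\Gf(\bar S)>0$, but the hypothesis permits $S_m(\t_2,S_T)=\b(S_T)$, where $\Gf$ vanishes and the leading balance changes to $U\sim\k(\bar S-S)$ with $\k$ solving a different quadratic; that case is easily patched (the modified $\k$ is again decreasing in $\t$, contradicting $U_2>U_1$), but as written your expansion does not cover it, while the paper's integral argument handles it automatically.
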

 
\begin{proof}
We argue as in \cite{van2007new,van2013travelling}. The key idea is to introduce the function 
\begin{equation}
u=\frac{(\Pim-w)}{\sqrt{d}} \text{ with } d=c(S_T)\t.\label{2PF_eq:DefUtau}
\end{equation}
Using \eqref{2PF_eq:fWtauGDE} one obtains for $u$ the equation
\begin{equation}
{u}'(S;\t,S_T)=\frac{1}{\sqrt{d}}\Pim'(S)-\frac{\Gf(S;S_B,S_T)}{u(S;\t,S_T)}.\label{2PF_eq:Utau}
\end{equation}
Clearly, $u|_{S_B}=0$ and $u>0$ in a right neighbourhood of $S_B$. Since $\Gf|_{S_B}=0$ as well, 
one finds from \eqref{2PF_eq:Utau} and the sign of $u$
\begin{equation*}
u'(S_B;\t,S_T)=-\frac{\Pim'(S_B)}{2}\left[\sqrt{\dfrac{1}{d}-4\tfrac{\Gf'(S_B;S_B,S_T)}{(\Pim'(S_B))^2}}-\dfrac{1}{\sqrt{d}}\right]>0
\end{equation*}
since $\Gf'(S_B;S_B,S_T)<0$. Using \eqref{2PF_eq:GfisAlphaDecreasing}, \Cref{2PF_rem:CwithST} and some elementary algebra
\begin{subequations}
\begin{align}
&u'(S_B;\t_1,S_T)<u'(S_B;\t_2,S_T) &\text{ in case }  (a),\label{2PF_eq:UtauAtSb1}\\
&u'(S_B;\t,S_{T,1})<u'(S_B;\t,S_{T,2}) &\text{ in case }  (b)\label{2PF_eq:UtauAtSb2}.
\end{align}
\end{subequations}

\emph{(a)} From \eqref{2PF_eq:UtauAtSb1}, $u_1(\cdot):=u(\cdot;\t_1,S_T)<u(\cdot;\t_2,S_T)=: u_2(\cdot)$ in a right n
eighbourhood of $S_B$. We claim that $u_1$ and $u_2$ do not intersect in $\{u_1>0\}$. Suppose, to the contrary, 
there exists $S_i>S_B$ such that $u_1(S)<u_2(S)$ for $S_B<S<S_i$ and $u_1(S_i)=u_2(S_i)$. Thus ${u_1}'(S_i)\geq {u_2}'(S_i)$. 
Evaluating \eqref{2PF_eq:Utau} at $S_i$ gives 
$$
{u_1}'(S_i)=\frac{\Pim'(S_i)}{\sqrt{d_1}}-\frac{\Gf(S_i;S_B,S_{T,1})}{u_1(S_i)}<\frac{\Pim'(S_i)}{\sqrt{d_2}}
-\frac{\Gf(S_i;S_B,S_{T,2})}{u_2(S_i)}={u_2}'(S_i),
$$
a contradiction.

 If $S_T<S_m(\t_2,S_T)\leq \b(S_T)$, the $u$-monotonicity gives $S_m(\t_1,S_T)\leq S_m(\t_2,S_T)$. We rule out the equality 
 by contradiction. Suppose $S_m(\t_1,S_T)=S_m(\t_2,S_T)=:S_m$. Then
 $$
u_1<u_2 \text{ in } (S_B,S_m).
 $$
Integrating equation \eqref{2PF_eq:Utau} from $S_T$ to $S_m$ gives
\begin{equation}
u_2(S_T)-u_1(S_T)=(\Pim(S_T)-\Pim(S_m))\left(\tfrac{1}{\sqrt{d_2}}- \tfrac{1}{\sqrt{d_1}}\right)+ \int^{S_m}_{S_T} \Gf \left(\tfrac{1}{u_2} - \tfrac{1}{u_1}\right).\label{2PF_eq:WhySmIsUnique}
\end{equation}
Since $\Gf>0$ in $(S_T,S_m)$ for $S_T<S_m\leq \b(S_T)$, the term in the right of \eqref{2PF_eq:WhySmIsUnique} is negative, yielding a contradiction.

\emph{(b)} Using \eqref{2PF_eq:GfisAlphaDecreasing} this part is demonstrated along the same lines. Details are omitted. \qed
\end{proof} 
\begin{remark}
To complement \Cref{2PF_prop:Wtau}, we further state that
$$
 S_m(\t_1,S_T)=S_T \text{ if } S_m(\t_2,S_T)=S_T \text{ and }  S_m(\t_2,S_T)=1 \text{ if } S_m(\t_1,S_T)=1.
$$
The statements follow directly from the ordering of the orbits.
\end{remark}

So far we have shown the monotonicity of the orbits. However, the question of continuous variation is still open. 
This is addressed  in the following results.
\begin{proposition}[Continuous dependence of $w$]
Let $v=\Pim -w$. In the context of \Cref{2PF_prop:Wtau} and with $\Phi$ defined in \eqref{2PF_eq:WlowerBoundDefPhi} we have
\begin{itemize}
\item[(a)] $0<v^2(S;\t_2,S_T) -v^2(S;\t_1,S_T)<2c(\t_2-\t_1)\Phi(S)$ for $S_B<S\leq S_m(\t_1,S_T)$;
\item[(b)] $0<v^2(S;\t,S_{T,2}) -v^2(S;\t,S_{T,1})<2(c(S_{T,2})\,\Phi(S;S_B,S_{T,2})-c(S_{T,1})\,\Phi(S;S_B,S_{T,1}))$ 
for $S_B<S\leq S_m(\t,S_{T,1})$;
\end{itemize}\label{2PF_prop:ContinuousDependence}
\end{proposition}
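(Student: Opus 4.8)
The plan is to obtain both inequalities from a single first-order differential inequality for $g:=v_2^2-v_1^2$, where $v_i=\Pim-w_i$ and the index $i$ tags the two orbits being compared, with $i=1$ for the smaller parameter ($\t_1$ in case (a), $S_{T,1}$ in case (b)). The only ingredients are the ODE for $v^2$ already recorded in \eqref{2PF_eq:VboundPointWise} and the ordering of orbits from \Cref{2PF_prop:Wtau}.

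\emph{Lower bounds.} These are immediate. In case (a), \Cref{2PF_prop:Wtau}(a) gives $w_2<w_1$ on $(S_B,S_m(\t_1,S_T))$, where also $w_1<\Pim$; hence $v_2>v_1>0$ there, so $v_2^2>v_1^2$. The same ordering shows $w_2$ stays below $\Pim$ wherever $w_1$ does, so the orbit $v_2$ is defined on all of $(S_B,S_m(\t_1,S_T))$, and strictness at the right endpoint when $S_m(\t_1,S_T)<1$ follows from the strict monotonicity of $S_m$ in \Cref{2PF_prop:Wtau}(a). Case (b) is identical, using \Cref{2PF_prop:Wtau}(b).

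\emph{Upper bounds.} By \eqref{2PF_eq:VboundPointWise}, with the wave speed $c_i$, relaxation time $\t_i$ and function $\Gf_i$ belonging to the $i$-th orbit, each $v_i$ satisfies $(v_i^2)'=2v_i\Pim'-2c_i\t_i\Gf_i$. Since $v_i(S_B)=\Pim(S_B)-w_i(S_B)=0$ by \eqref{2PF_eq:WinitialValue}, we get $g(S_B)=0$, and subtracting the two identities,
\begin{equation*}
g'=2(v_2-v_1)\Pim'-2\bigl(c_2\t_2\Gf_2-c_1\t_1\Gf_1\bigr).
\end{equation*}
By \ref{2PF_P:Pc} we have $\Pim'<0$, and by the lower-bound step $v_2-v_1>0$, so the first term is strictly negative; hence $g'<-2\bigl(c_2\t_2\Gf_2-c_1\t_1\Gf_1\bigr)$ on the interval where $w_1<\Pim$. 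In case (a), $c_1=c_2=c$ and $\Gf_1=\Gf_2=\Gf(\cdot;S_B,S_T)$, so integrating from $S_B$ to $S$ and using $\Phi(S)=-\int_{S_B}^S\Gf$ gives $g(S)<2c(\t_2-\t_1)\Phi(S)$. In case (b), $\t_1=\t_2=\t$, $c_i=c(S_{T,i})$ and $\Gf_i=\Gf(\cdot;S_B,S_{T,i})$, so the same integration gives $g(S)<2\t\bigl(c(S_{T,2})\,\Phi(S;S_B,S_{T,2})-c(S_{T,1})\,\Phi(S;S_B,S_{T,1})\bigr)$, i.e. the stated bound up to the overall factor $\t$ (which should appear on the right-hand side of part (b)). The closed right endpoint $S=S_m$ is then recovered by passing to the limit, using continuity of $g$ and of the right-hand sides.

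\emph{Where the care is needed.} The algebra is mechanical; the one point I expect to need attention is the integrability of $g'$ at the endpoints. Near $S_B$ both $\Gf_i$ and $v_i$ vanish, but the quotient $\Gf_i/v_i$ stays bounded — exactly as in the evaluation of $u'(S_B;\cdot)$ in the proof of \Cref{2PF_prop:Wtau} — so $g'$ extends continuously to $S_B$. Near a right endpoint $S_i<1$ at which $v_1\to 0$ one has $v_1'\to-\infty$, yet $(v_1^2)'=2v_1\Pim'-2c_1\t_1\Gf_1$ stays finite; this is precisely why the estimate is carried out for $v^2$ rather than for $v$, and it keeps $g'$ integrable up to $S_i$. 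Beyond these checks, nothing is needed that is not already used for \eqref{2PF_eq:VboundPointWise} and \Cref{2PF_prop:Wtau}.
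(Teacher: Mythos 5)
Your proof is correct and follows essentially the same route as the paper's: the paper likewise integrates the identity $(v^2)'=2v\,\Pim'-2c\t \Gf$ and uses the orbit ordering of \Cref{2PF_prop:Wtau} to discard the $2(v_2-v_1)\Pim'$ term, exactly as you do. Your remark that the integration in case (b) yields $2\t\bigl(c(S_{T,2})\,\Phi(S;S_B,S_{T,2})-c(S_{T,1})\,\Phi(S;S_B,S_{T,1})\bigr)$, so that a factor $\t$ is missing from the right-hand side of part (b) as stated, is a correct identification of a typo in the proposition.
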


\begin{proof}
As shown earlier in this section, $v$ satisfies the equation
$$
(v^2)'=2v \Pim' -2c\t \Gf \text{ in } \{v>0\}. 
$$
Integrating this equation and using \Cref{2PF_prop:Wtau} and $\Phi$ from \eqref{2PF_eq:WlowerBoundDefPhi} gives the desired inequalities. 
\qed
\end{proof}

\begin{corollary}[Continuous dependence of $S_m$]
Let $\t_0>0$ and $S_{T_0}$ be fixed such that $S_m(\t_0,S_{T_0})\leq \b(S_{T_0})$. Then for any small $\e>0$, 
there exists $\d=\d(\e;\t_0,S_{T_0})$ so that $|S_m(\t,S_T)-S_m(\t_0,S_{T_0})|<\e$ if $\max\{|\t-\t_0|, |S_T-S_{T_0}|\}<\d$ 
and $S_m(\t,S_T)<\b(S_T)$.\label{2PF_cor:ContinuousDependence}
\end{corollary}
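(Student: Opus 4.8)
The goal is to bound $|S_m(\t,S_T)-S_{m_0}|<\e$ with $S_{m_0}:=S_m(\t_0,S_{T_0})$, by proving the two one-sided estimates separately and taking $\d(\e;\t_0,S_{T_0})$ to be the smaller of the resulting thresholds. Recall from \Cref{2PF_prop:PropertiesOfSm} and the hypothesis that $S_{m_0}\in(S_B,1)$ with $S_{m_0}\in[S_{T_0},\b(S_{T_0})]$, that at the exit $v(S_{m_0};\t_0,S_{T_0})=0$ where $v:=\Pim-w$, and from \Cref{2PF_prop:WequalsPimWhenGfpos} that $\Gf(S_{m_0};S_B,S_{T_0})\geq 0$. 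The starting observation is the identity $((\Pim-w)^2)'=2(\Pim-w)\Pim'-2c\t\,\Gf$ of \eqref{2PF_eq:VboundPointWise}: since $\Pim'<0$, $v\geq0$ and (by \Cref{2PF_rem:CwithST}) $c=c(S_T)>0$ stays bounded below near $S_{T_0}$, this shows that $v^2$ is non-increasing on any $S$-interval where $\Gf\geq0$, and satisfies $(v^2)'\leq-\kappa<0$ on any interval where $\Gf$ is bounded below by a positive constant.

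For the lower bound $S_m(\t,S_T)>S_{m_0}-\e$ I would distinguish $S_{m_0}=S_{T_0}$ from $S_{m_0}>S_{T_0}$. If $S_{m_0}=S_{T_0}$ the estimate is immediate from $S_m(\t,S_T)\geq S_T>S_{T_0}-\d\geq S_{m_0}-\e$ by \Cref{2PF_prop:PropertiesOfSm}(a). If $S_{m_0}>S_{T_0}$, I would invoke continuous dependence of the profile $w$: away from $\{w=\Pim\}$ the right-hand side of \eqref{2PF_eq:fWtauGDE} is continuous in $(S,\t,S_T)$ and locally Lipschitz in $w$, while near $S_B$ the already-established local theory through the substitution \eqref{2PF_eq:DefUtau} (whose value and derivative at $S_B$ depend continuously on the parameters) pins down $w$ on a parameter-independent interval $(S_B,S_B+\eta_0]$. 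Since $w(\cdot;\t_0,S_{T_0})$ stays strictly below $\Pim$ on the compact set $[S_B+\eta_0,S_{m_0}-\e]$, a standard continuation argument shows that for $\d$ small $w(\cdot;\t,S_T)$ is also defined and stays below $\Pim$ there, so it has not yet exited and $S_m(\t,S_T)>S_{m_0}-\e$.

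For the upper bound $S_m(\t,S_T)<S_{m_0}+\e$ the plan is a transversality argument at the exit; assume $S_m(\t,S_T)\geq S_{m_0}$ (otherwise nothing to prove). In the transversal interior case $S_{T_0}<S_{m_0}<\b(S_{T_0})$ one has $\Gf(S_{m_0};S_B,S_{T_0})>0$ by \eqref{2PF_eq:PropGfsign}. Applying \Cref{2PF_prop:ContinuousDependence} (a short case distinction on the signs of $\t-\t_0$ and $S_T-S_{T_0}$ makes the estimate valid up to $S=S_{m_0}$; in the remaining configurations \Cref{2PF_prop:Wtau} gives $S_m(\t,S_T)<S_{m_0}$ outright) yields $v^2(S_{m_0};\t,S_T)\leq C\d$. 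On $[S_{m_0},S_{m_0}+\e]$, $\Gf(\cdot;S_B,S_T)$ is bounded below by a positive constant (by continuity and \eqref{2PF_eq:PropGfsign}, for $\e,\d$ small), hence $(v^2)'\leq-\kappa<0$ there; as $v^2$ starts below $C\d<\kappa\e$ it must vanish before $S_{m_0}+\e$, i.e.\ $S_m(\t,S_T)<S_{m_0}+\e$. The boundary case $S_{m_0}=\b(S_{T_0})$ is covered directly by the hypothesis $S_m(\t,S_T)<\b(S_T)$ together with continuity of $\b$.

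The genuinely delicate case — and the main obstacle — is the degenerate exit $S_{m_0}=S_{T_0}$, where $\Gf(S_{m_0};S_B,S_{T_0})=0$ and the slope argument above collapses. Here I would anchor at $S=S_{T_0}$ rather than at $S_{m_0}$: combining \Cref{2PF_prop:Wtau} and \Cref{2PF_prop:ContinuousDependence} — legitimately, since $S_{T_0}=S_{m_0}\leq\min\{S_m(\t,S_T),S_m(\t,S_{T_0})\}$ once $S_m(\t,S_T)\geq S_{T_0}$, the complementary case again being trivial — gives $v^2(S_{T_0};\t,S_T)\leq v^2(S_{T_0};\t_0,S_{T_0})+C\d=C\d$. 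One then traverses the short interval between $S_{T_0}$ and $S_T$, on which $\Gf(\cdot;S_B,S_T)$ may be negative but is $o(1)$ in size (it vanishes at $S_T$ and its values tend to $\Gf(S_{T_0};S_B,S_{T_0})=0$ as $S_T\to S_{T_0}$), so $v^2$ grows only by $o(1)$ and is still small at $\max\{S_T,S_{T_0}\}$; beyond that point $\Gf\geq0$, $v^2$ is non-increasing, and the uniform-slope estimate on a subinterval of $(S_T,\b(S_T))$ sitting near $S_{T_0}+\e/2$ forces $v^2$ to zero before $S_{T_0}+\e$, so $S_m(\t,S_T)<S_{m_0}+\e$ for $\d$ small relative to $\e$. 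Combining the two one-sided estimates and taking $\d$ to be the minimum of the finitely many thresholds produced completes the argument.
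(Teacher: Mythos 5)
Your argument is correct, and it rests on the same key tool as the paper's own proof: the quantitative estimate of \Cref{2PF_prop:ContinuousDependence} evaluated at the exit point, combined with the monotonicity of \Cref{2PF_prop:Wtau}. The organization, however, is genuinely different and more thorough. The paper proves only $\t$-continuity at fixed $S_T$ (asserting that $S_T$-continuity ``follows the same lines''): for $\t>\t_0$ it deduces $w(S_m(\t_0);\t)>\Pim(S_m(\t_0)+\e)$ from the $v^2$ bound and concludes via $w'\geq 0$ past $S_m(\t_0)$ (where $\Gf\geq 0$) together with the strict decrease of $\Pim$; for $\t<\t_0$ it uses the strict positivity of $v(\cdot;\t_0)$ away from $S_m(\t_0)$. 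You instead split into two one-sided estimates, vary both parameters simultaneously, and---most valuably---isolate and patch the degenerate exit $S_{m_0}=S_{T_0}$, where $\Gf$ vanishes at the exit and where, once $S_T$ is perturbed, $\Gf(\cdot;S_B,S_T)$ is actually negative on the gap between $S_{T_0}$ and $S_T$; this is exactly the configuration that the paper's ``same lines'' remark glosses over, and your traverse-then-uniform-slope argument handles it cleanly. Your bound $(v^2)'\leq -\kappa$ on an interval where $\Gf$ is bounded below is an equivalent substitute for the paper's monotone-intersection step. The one soft spot is your lower bound in the case $S_{m_0}>S_{T_0}$: you appeal to a generic continuation/continuous-dependence argument for the initial value problem \eqref{2PF_eq:fWtauGDE}, which is singular at $S_B$, and continuous dependence on parameters for that singular problem is plausible but not free. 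The detour is unnecessary: \Cref{2PF_prop:WequalsPimWhenGfpos} already rules out an exit on $(S_B,S_T)$, and on the compact set $[S_{T_0}-\d,\,S_{m_0}-\e]$ the reference profile satisfies $v(\cdot;\t_0,S_{T_0})\geq m>0$, so the two-sided estimate of \Cref{2PF_prop:ContinuousDependence} (applied through an intermediate orbit to decouple $\t$ from $S_T$, as you do for the upper bound) directly yields $v(\cdot;\t,S_T)>0$ there for $\d$ small---which is essentially how the paper argues the case $\t<\t_0$.
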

\begin{proof}
We only demonstrate continuity with respect to $\t$. Proving the continuity with respect to $S_T$ follows the same lines. 
We therefore take $S_T=S_{T_0}$ and drop its dependence from the notation for simplicity. 
Consider first $\t>\t_0$ and $S_{T_0}<\b(S_{T_0})$. Recalling $v(S_m(\t_0);\t_0)=0$, \Cref{2PF_prop:ContinuousDependence} gives
$$
0<v(S_m(\t_0);\t)<\sqrt{2c(\t-\t_0)\Phi(S_m(\t_0))},
$$
where $\Phi(S_m(\t_0))>0$ by \eqref{2PF_eq:PropOfPhi} and \Cref{2PF_prop:PropertiesOfSm}. For any given (small) 
$\e>0$ and with reference to \Cref{2PF_fig:ContinuousDependence} choosing $\d<\frac{\Pim(S_m(\t_0))-\Pim(S_m(\t_0)+\e)}{2c\Phi(S_m(\t_0))}$ we have
$$
w(S_m(\t_0),\t)>\Pim(S_m(\t)+\e) \text{ for all } \t-\t_0 <\d.
$$
Since $w'>0$, this implies the continuity of $\t>\t_0$.

Next let $\t<\t_0$ and $S_{T_0}\leq \b(S_{T_0})$. Now we have from \Cref{2PF_prop:ContinuousDependence}
\begin{equation}
0<v(S_m(\t);\t_0)<\sqrt{2c(\t_0-\t)\Phi(S_m(\t))}.\label{2PF_eq:vSmBound}
\end{equation}
Since $v(\cdot,\t_0)\in C([S_B,S_m(\t_0)])$, $v(S_m(\t_0),\t_0)=0$ and $v(\cdot, \t_0)>0$ in $(S_B,S_m(\t_0))$, 
the continuity of $S_m$ follows directly from \eqref{2PF_eq:vSmBound}.
\qed
\end{proof}

\begin{figure}[H]
\begin{center}
\includegraphics[width=0.85\textwidth]{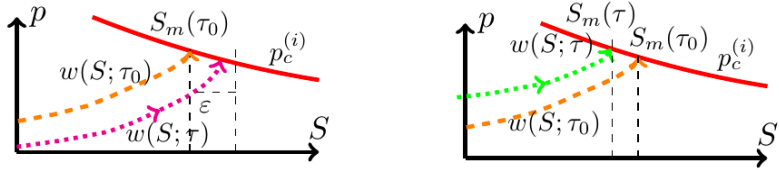}
\caption{Behaviour of $w$ close to $S_m$: (left) $\t>\t_0$ and (right) $\t<\t_0$.}
\label{2PF_fig:ContinuousDependence}
\end{center}
\end{figure}
Now we are in a position to describe how $S_m$ behaves for different combinations of $S_T$ and $\t$.
\begin{proposition}
Let $S_B$ satisfy \eqref{2PF_eq:SbSt} and  fix $S_T\in (S_B,\bar{S}]$. Then there exists a $\t_m(S_T)>0$ such that 
$$
S_m(\t,S_T)=S_T \text{ for all } 0<\t\leq \t_m(S_T) \text{ and } S_m(\t,S_T)>S_T\text{ for all } \t> \t_m(S_T).
$$\label{2PF_prop:Tau_m}
\end{proposition}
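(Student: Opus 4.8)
The plan is to recast the equality $S_m(\t,S_T)=S_T$ as a statement about whether the orbit described by $w(\cdot;\t,S_T)$ is captured by the equilibrium $E^i_{S_T}=(S_T,\Pim(S_T))$. In $\Him$ one has $S'>0$, so $S$ increases monotonically along the orbit; moreover, since $\Gf(\cdot;S_B,S_T)<0$ on $(S_B,S_T)$ by \eqref{2PF_eq:PropGfsign}, \Cref{2PF_prop:WequalsPimWhenGfpos} forbids the orbit from meeting $\Pim$ there. Hence the orbit reaches $S=S_T$ while staying strictly below $\Pim$, and exactly one of two alternatives occurs: either $v(S_T):=\Pim(S_T)-w(S_T)=0$, so that the orbit converges to $E^i_{S_T}$ and $S_m=S_T$; or $v(S_T)>0$, in which case (because $\Gf>0$ immediately to the right of $S_T$) the orbit rises and first meets $\Pim$ at some $S_m>S_T$. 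Thus the proposition is equivalent to showing that $v(S_T;\t)=0$ for small $\t$ and $v(S_T;\t)>0$ for large $\t$, with a threshold in between.

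For the upper bound $\t_m<\infty$ I would integrate the identity $(v^2)'=2v\Pim'-2c\t\Gf$ (established in the proof of \Cref{2PF_prop:ContinuousDependence}) over $(S_B,S_T)$. Using $\int_{S_B}^{S_T}\Gf\,dS=-\Phi(S_T)$ and the a priori bound $v<\sqrt{2c\t\,\Phi(S)}$ from \eqref{2PF_eq:WlowerBoundDefPhi} to estimate the negative term $2\int_{S_B}^{S_T}v\Pim'\,dS$ from below, I obtain $v^2(S_T)\ge 2c\t\,\Phi(S_T)-C\sqrt{\t}$ with $C=C(S_B,S_T)>0$. Since $\Phi(S_T)>0$ by \eqref{2PF_eq:PropOfPhi}, the right-hand side is positive once $\t$ is large, forcing $v(S_T)>0$ and hence $S_m>S_T$.

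For the lower bound $\t_m>0$ I would linearise \hyperref[2PF_eq:ODE]{(TW)} in $\Him$ at $E^i_{S_T}$. Recalling that $c=c(S_T)>0$ in Scenario A (since $F$ is increasing) and that $\Pim'(S_T)<0$ while $\Gf'(S_T)>0$ for $S_T<\bar{S}$, the linearisation has determinant $\Gf'(S_T)/(c\t)>0$ and trace $\Pim'(S_T)/(c\t)<0$, so $E^i_{S_T}$ is a stable node when $\t\le\t^*:=(\Pim'(S_T))^2/(4c\,\Gf'(S_T))$ and a stable focus when $\t>\t^*$. In the focus regime the signed distance to $\Pim$ along any trajectory converging to $E^i_{S_T}$ would oscillate; since the orbit cannot cross $\Pim$ before $S_T$, convergence through $\Him$ is impossible and again $S_m>S_T$. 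In the node regime, and in particular for small $\t$, the equilibrium carries a slow invariant manifold that is $\mathcal{O}(\t)$-close to $\Pim$ (matching the quasi-static balance $v\approx c\t\,\Gf/\Pim'$, which is positive on $(S_B,S_T)$ and vanishes at $S_T$); after a thin layer near $S_B$ the orbit settles onto this manifold and is carried monotonically into $E^i_{S_T}$ without meeting $\Pim$, so $v(S_T)=0$ and $S_m=S_T$. This makes $\{\t:S_m(\t,S_T)=S_T\}$ nonempty.

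Finally I would set $\t_m(S_T):=\sup\{\t>0:S_m(\t,S_T)=S_T\}$, which lies in $(0,\infty)$ by the two steps above. The remark following \Cref{2PF_prop:Wtau} (downward closedness) yields $S_m=S_T$ for every $\t<\t_m$; since $S_m\ge S_T$ always by \Cref{2PF_prop:PropertiesOfSm}(a), the definition of the supremum gives $S_m>S_T$ for every $\t>\t_m$; and the continuity of $S_m$ in $\t$ from \Cref{2PF_cor:ContinuousDependence} (valid near $\t_m$ because there $S_m\le\b(S_T)$ when $S_T<\bar{S}$) upgrades the left endpoint to $S_m(\t_m)=S_T$, producing the closed interval $(0,\t_m]$. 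I expect the main obstacle to be the capture argument of the third step: turning the node picture into a rigorous statement that the \emph{specific} orbit leaving the saddle $E^i_{S_B}$ lands on the convergent slow approach to $E^i_{S_T}$ and remains in $\Him$. This is delicate at the turning point $S=S_T$ (where $\Gf=0$) and in the degenerate case $S_T=\bar{S}$ (where $\Gf'(S_T)=0$ and the node computation breaks down, although $\t_m<\infty$ still follows from the second step).
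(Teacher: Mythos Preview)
Your overall architecture is right, and your argument for $\t_m(S_T)<\infty$ is actually cleaner than the paper's: integrating $(v^2)'=2v\,\Pim'-2c\t\Gf$ over $(S_B,S_T)$ and bounding $v$ by $\sqrt{2c\t\Phi}$ gives a self-contained estimate, whereas the paper invokes an external result (that $w(S;\t,S_T)\to-\infty$ as $\t\to\infty$) together with the continuity corollary. Your dichotomy in Step~1 and your assembly in Step~4 (supremum, downward closedness from the remark after \Cref{2PF_prop:Wtau}, continuity for the endpoint) match the paper's logic.

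The genuine gap is Step~3, and you have already located it. Saying that for small $\t$ the orbit ``settles onto the slow manifold'' and is ``carried monotonically into $E^i_{S_T}$'' is a heuristic, not a proof: you have not shown that the \emph{particular} orbit leaving $E^i_{S_B}$ lands on the attracting branch rather than crossing $\Pim$ at some $S_m>S_T$ after a transient. The node/focus computation only tells you what happens to orbits already near $E^i_{S_T}$; it does not control the global approach. The paper closes this gap with an elementary barrier argument that bypasses linearisation entirely: set $\ell_r(S)=\Pim(S)+r(S-S_T)$ and show that for suitable $r$ the orbit cannot cross $\ell_r$ on $(S_B,S_T)$. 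At a hypothetical first crossing $S_i$ one combines $w'(S_i)\le\ell_r'(S_i)$ with the ODE \eqref{2PF_eq:fWtauGDE} and the bound $\Gf(S_i)/(S_i-S_T)\le m_0:=\sup_{[S_B,S_T]}\Gf'$ to obtain the quadratic inequality $r^2-r\underline{P}+c\t m_0\ge 0$, where $\underline{P}=\min(-\Pim')>0$. For $\t<\underline{P}^2/(4cm_0)$ one can choose $r$ violating this, a contradiction; hence $w$ is trapped between $\ell_r$ and $\Pim$, forcing $v(S_T)=0$. This barrier argument is both simpler than a slow-manifold construction and robust at $S_T=\bar S$: it uses only that $m_0<\infty$, not that $\Gf'(S_T)\neq 0$, so the degenerate case you flagged causes no trouble.
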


\begin{proof} 
The proof is based on Proposition 2.1 of \cite{mitra2018wetting} and Lemma 4.5 of \cite{van2007new}. 
Let us define the function $\ell_r(S)=\Pim(S)+ r(S-S_T)$ for $r>0$ and the constant $\underline{P}=\min_{S\in(S_B,\bar{S})}\{-\Pim'(S)\}>0$.
We show that for $\t$ small enough there exists an $r>0$ for which the curves $w(S)$ and $\ell_r(S)$ do not intersect. Specifically
\begin{equation}
\ell_{r}(S)<w(S;\t,S_T)<\Pim(S) \text{ for all } r>r^-\in (0,\underline{P}) \text{ and } S\in (S_B,S_T).\label{2PF_eq:BetweenEllrAndPim}
\end{equation}
This directly shows that $w(S_T)=\Pim(S_T)=\ell_r(S_T)$ meaning $S_m(\t,S_T)=S_T$.

Assuming the contrary, let $S_i\in (S_B,S_T)$ be the coordinate at which  $w(S;\t,S_T)$ and $\ell_r(S)$ 
intersect for the first time. Since $w(S_B)=\Pim(S_B)>\ell_r(S_B)$, one gets
\begin{align}
&(\Pim-w)(S_i)=r(S_T-S_i) \text{ and } \tfrac{c\t\Gf(S_i)}{r(S_T-S_i)}=w'(S_i)\leq {\ell_r}'(S_i)= \Pim'(S_i)+ r.\label{2PF_eq:EllmIntersection}
\end{align}
Recalling that $\Gf(S_T)=0$ and taking $m_0(S_T)=\sup_{S\in[S_B,S_T]} \Gf'(S;S_B,S_T)<\infty$ we get
 $$
 \frac{\Gf(S_i)}{S_i-S_T}\leq m_0(S_T).
 $$
Combining this with \eqref{2PF_eq:EllmIntersection} results in the inequality $r^2 -r \underline{P}  + c\t m_0\geq 0$. 
The roots of the quadratic expression on the left hand side of this inequality motivates us to define
\begin{equation}
\bar{\t}_m=\frac{\underline{P}^2}{4c m_0(S_T)} \text{ and } 
r^\pm=\frac{\underline{P}}{2}\left[1\pm \sqrt{1-\tfrac{\t}{\bar{\t}_m}}\right].
\label{2PF_eq:DefTauBarM}
\end{equation}
It directly follows that the inequality in \eqref{2PF_eq:EllmIntersection} 
is not satisfied if $0<\t<\bar{\t}_m$ and $r\in (r^-,r^+)\subset (0, \underline{P})$. 
In this case one has $w(S)> \ell_{r}(S)$ for $S\in (S_B,S_T)$ and consequently, \eqref{2PF_eq:BetweenEllrAndPim} holds. 
Note that $\t>\bar{\t}_m$ does not necessarily imply that $S_m(S_T,\t)>S_T$. For this purpose, we define
\begin{align}
\t_m(S_T):=\sup\{\t: S_m(S_T,\t)=S_T\}\geq \bar{\t}_m>0.\label{2PF_eq:DefTauM}
\end{align}
Using \cite[Proposition 4.2(b)]{VANDUIJN2018232}, which states that
\begin{equation}
w(S;\t,S_T)\to -\infty \text{ as } \t\to \infty \text{ for all } S\in (S_B,S_T], \label{2PF_eq:WtauToInfty}
\end{equation}
 we get from \Cref{2PF_cor:ContinuousDependence}, $\t_m(S_T)<\infty$.\qed
\end{proof}
We consider now the case $\t>\t_m(S_T)$. \Cref{2PF_prop:PropertiesOfSm} guarantees that 
$S_m(S_T,\t)<\g(S_T)\leq \b(S_T)$ if $S_B<S_T\leq S_*$. However, for $S_T> S_*$ it is unclear whether $S_m(S_T,\t)$ 
is bounded by $\b(S_T)$ or not. We show below that a $\t_c=\t_c(S_T)$ exists in this case such that 
$S_m(S_T, \t) \in (S_T,\b(S_T)]$ if $\t \in (\t_m(S_T), \t_c(S_T)]$ implying from \Cref{2PF_prop:PropertiesOfSm} 
that $S_m(S_T,\t)=1$ for all $\t>\t_c(S_T)$. 
\begin{proposition}
Let $S_B$ satisfy \eqref{2PF_eq:SbSt}. Then the following holds:
\begin{itemize}
\item[(a)] For each $S_T\in (S_*,\bar{S})$, there exists a unique $\t_c=\t_c(S_T)$ such that
$$
S_m(\t_c,S_T)=\b(S_T).
$$
\item[(b)] The function $\t_c(\cdot)$ is strictly decreasing and continuous on $[S_*,\bar{S}]$. 
One has $\t_c(S_T)\to \infty$ as $S_T\searrow S_*$ and $\t_c(\bar{S})=\bar{\t}=\t_m(\bar{S})>0$.
\end{itemize}\label{2PF_pros:PropOFTauC}
\end{proposition}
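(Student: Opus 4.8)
The plan is to set
$$\tau_c(S_T):=\sup\{\tau>0:\ S_m(\tau,S_T)<\beta(S_T)\},$$
and to prove it is finite, that $S_m(\tau_c(S_T),S_T)=\beta(S_T)$, and that it is the unique wave parameter with this property. Fix $S_T\in(S_*,\bar S)$; then $c=c(S_T)>0$ (\Cref{2PF_rem:CwithST}), $\bar S<\beta(S_T)<1$, and by \eqref{2PF_eq:PropGfsign}, \eqref{2PF_eq:PropOfPhi2} and \Cref{2PF_prop:PropertiesGammaFunc} we have $\Gf(\cdot;S_B,S_T)>0$ on $(S_T,\beta(S_T))$ and $\Phi(\cdot;S_B,S_T)>0$ on $(S_B,1)$. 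The first step is an a priori lower bound forcing $\tau_c(S_T)<\infty$: with $u=(\Pim-w)/\sqrt{c\tau}$ as in \Cref{2PF_prop:Wtau}, equation \eqref{2PF_eq:Utau} with $u(S_B)=0$ and $\Pim'<0$ gives $u^{2}(S)=2\Phi(S)+\tfrac{2}{\sqrt{c\tau}}\int_{S_B}^{S}u\,\Pim'\le 2\Phi(S)$ on $(S_B,S_m(\tau,S_T)]$, whence $u$ is bounded on $(S_B,\beta(S_T)]$ by a constant depending only on $\max_{[S_B,\beta(S_T)]}\Phi$; reinserting this bound into the integral yields $u^{2}(S)\ge 2\Phi(S)-C/\sqrt{c\tau}$ with $C=C(S_B,S_T)$. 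If the orbit exited at some $S_m(\tau,S_T)\le\beta(S_T)$, then $u$ vanishing there would force $\Phi(S_m(\tau,S_T))\le C/\sqrt{c\tau}$; but for $\tau>\tau_m(S_T)+1$ one has $S_m(\tau,S_T)\ge S_m(\tau_m(S_T)+1,S_T)>S_T$ by \Cref{2PF_prop:Tau_m} and \Cref{2PF_prop:Wtau}, so $\Phi(S_m(\tau,S_T))$ stays above a fixed positive constant, which is impossible for $\tau$ large. Hence $S_m(\tau,S_T)>\beta(S_T)$ — and therefore $S_m(\tau,S_T)=1$ by \Cref{2PF_prop:PropertiesOfSm}(b) — for all large $\tau$; thus $\tau_c(S_T)<\infty$, and $\tau_c(S_T)>\tau_m(S_T)$ because $S_m(\tau_m(S_T),S_T)=S_T<\beta(S_T)$ and $S_m(\cdot,S_T)$ is continuous at $\tau_m(S_T)$ by \Cref{2PF_cor:ContinuousDependence}.

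The second step identifies $S_m$ at $\tau_c(S_T)$. The workhorse is an elementary \emph{exit estimate}: if $w(S_0;\tau,S_T)<\Pim(S_0)$ at some $S_0\in(S_T,\beta(S_T))$ and the orbit is defined past $S_0$, then integrating $\big((\Pim-w)^2\big)'\le-2c\tau\,\Gf$ (valid since $(\Pim-w)\Pim'\le 0$) from $S_0$ shows that $w$ must meet $\Pim$ before $\beta(S_T)$ as soon as $(\Pim-w)^{2}(S_0;\tau,S_T)<2c\tau\int_{S_0}^{\beta(S_T)}\Gf$, the right side being a positive constant bounded below when $\tau$ is bounded below. Let $S_\flat:=\lim_{\tau\uparrow\tau_c(S_T)}S_m(\tau,S_T)$, which exists and is $\le\beta(S_T)$ by monotonicity. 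Applying the exit estimate at $S_0=S_\flat$ for $\tau$ slightly above $\tau_c(S_T)$ — where \Cref{2PF_prop:ContinuousDependence}(a) makes $(\Pim-w)^{2}(S_\flat;\tau,S_T)$ small while \Cref{2PF_prop:PropertiesOfSm}(b) forces $S_m(\tau,S_T)=1$ — rules out $S_\flat<\beta(S_T)$, so $S_m(\tau,S_T)\nearrow\beta(S_T)$. Then \Cref{2PF_prop:ContinuousDependence}(a), applied to the pair $(\tau,\tau_c(S_T))$ and evaluated at $S=S_m(\tau,S_T)$, gives $(\Pim-w)(S_m(\tau,S_T);\tau_c(S_T),S_T)\to 0$; combined with continuity of $w(\cdot;\tau_c(S_T),S_T)$ and $S_m(\tau,S_T)\to\beta(S_T)$ this yields $w(\beta(S_T);\tau_c(S_T),S_T)=\Pim(\beta(S_T))$, i.e. $S_m(\tau_c(S_T),S_T)=\beta(S_T)$. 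Uniqueness is immediate from the strict $\tau$-monotonicity of $S_m(\cdot,S_T)$ on the range $S_T<S_m\le\beta(S_T)$ (\Cref{2PF_prop:Wtau}(a)).

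For part (b), strict monotonicity of $\tau_c$ follows from \Cref{2PF_prop:Wtau}(b): for $S_{T,1}<S_{T,2}$ in $(S_*,\bar S)$ and $\tau=\tau_c(S_{T,2})$ we get $S_m(\tau,S_{T,1})<S_m(\tau,S_{T,2})=\beta(S_{T,2})<\beta(S_{T,1})$, hence $\tau_c(S_{T,1})>\tau_c(S_{T,2})$. A monotone function has only jump discontinuities, and these are excluded by the same two tools: at a hypothetical jump at $S_T^{0}$ choose $\hat\tau$ inside the gap; then $S_m(\hat\tau,S_T)<\beta(S_T)$ for $S_T$ on one side and $S_m(\hat\tau,S_T)=1$ on the other, contradicting the $S_T$-continuity of the orbit (\Cref{2PF_prop:ContinuousDependence}(b)) together with the exit estimate applied at $S_0=S_m(\hat\tau,S_T^{0})$. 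For the endpoints, $\tau_c(S_T)\to\infty$ as $S_T\searrow S_*$ because \Cref{2PF_prop:PropertiesOfSm}(a) gives $S_m(\tau,S_*)<\gamma(S_*)=\beta(S_*)$ for \emph{all} $\tau$, so a finite limit $L$ would, via \Cref{2PF_prop:ContinuousDependence}(b) and the exit estimate at $S_0=S_m(L+1,S_*)<\beta(S_*)$, force $S_m(L+1,S_T)<\beta(S_T)$ for $S_T$ near $S_*$, contradicting $\tau_c(S_T)\le L$; and $\tau_c(\bar S)=\bar\tau:=\tau_m(\bar S)>0$ because $\beta(\bar S)=\bar S$, so (as $S_m\ge S_T$ always) $S_m(\tau,\bar S)\le\beta(\bar S)$ is equivalent to $S_m(\tau,\bar S)=\bar S$, which by \Cref{2PF_prop:Tau_m} holds exactly for $\tau\le\tau_m(\bar S)$. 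Continuity of $\tau_c$ up to $\bar S$ follows from one further application of the jump-exclusion argument.

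I expect the main obstacle to be this \emph{threshold continuity}: the continuous-dependence statements (\Cref{2PF_prop:ContinuousDependence}, \Cref{2PF_cor:ContinuousDependence}) compare two orbits only up to the smaller of their maximal saturations, so one cannot evaluate the nearby early-exiting orbits directly at $\beta(S_T)$; the remedy, used uniformly, is the one-sided exit estimate, which lets one deduce $S_m<\beta(S_T)$ from the mere closeness of $w$ to $\Pim$ at a single point of $(S_T,\beta(S_T))$. By contrast, the a priori bound $u^{2}\ge 2\Phi-C/\sqrt{c\tau}$ that gives $\tau_c(S_T)<\infty$ is routine once the estimates behind \Cref{2PF_prop:Wtau} and \Cref{2PF_prop:PropertiesOfSm} are in hand.
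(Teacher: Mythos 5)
Your proposal is correct and follows the same overall skeleton as the paper's proof (contradiction for large $\t$ to get existence, the monotonicity of \Cref{2PF_prop:Wtau} for uniqueness and for the decrease of $\t_c$, the continuous-dependence results for continuity, and \Cref{2PF_prop:PropertiesOfSm}(a) at $S_*$ for the blow-up), but two of your technical devices genuinely differ from the paper's. First, for finiteness of $\t_c(S_T)$ the paper invokes the external fact \eqref{2PF_eq:WtauToInfty} ($w\to-\infty$ pointwise as $\t\to\infty$, quoted from the Richards-equation analysis) to produce a point $S_1$ with $w(S_1)=0$, and then derives a contradiction from the integral identity $-\tfrac12\Pim^2(S_B)=\int_{S_B}^{S_1}(c\t\,\Gf+\Pim' w)$; your two-sided bound $2\Phi(S)-C/\sqrt{c\t}\le u^2(S)\le 2\Phi(S)$ extracts the same dominance of the $c\t\Phi$ term directly from \eqref{2PF_eq:Utau} and is self-contained, which is arguably cleaner (note that $S_m(\t,S_T)\ge S_T$ already follows from \Cref{2PF_prop:WequalsPimWhenGfpos}, so the detour through $\t_m(S_T)+1$ is not needed to keep $\Phi(S_m)$ bounded below). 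Second, your explicit one-sided ``exit estimate'' is a genuine addition: the paper leaves the identification $S_m(\t_c,S_T)=\b(S_T)$ and the exclusion of jumps of $\t_c$ essentially implicit, whereas you correctly isolate the obstacle (the bounds of \Cref{2PF_prop:ContinuousDependence} only compare orbits up to the smaller exit saturation) and supply the missing tool; for the limit $\t_c(S_T)\to\infty$ the paper instead uses a direct ordering argument ($w(S_m(\t,S_*);\t,S_*)-w(S_m(\t,S_*);\t,S_T)\ge\Pim(S_m(\t,S_*))-\Pim(\b(S_T))$, which cannot tend to $0$), which avoids the exit estimate there. Two small points to tidy up: your $\sup$-definition degenerates at $S_T=\bar{S}$, where $\{\t:S_m(\t,\bar{S})<\b(\bar{S})\}$ is empty since $S_m\ge S_T=\b(\bar{S})$ always, so the endpoint value must be defined via $S_m(\t,\bar{S})=\bar{S}$ as you in effect do (the uniqueness claim of part (a) also fails at this endpoint, which is why the paper states (a) on the open interval); and in the limit argument at $S_*$ the exit point $S_0=S_m(L+1,S_*)$ need not lie above $S_T$, so the sign of $\Gf(\cdot;S_B,S_T)$ on $(S_0,S_T)$ requires the extra (routine) step of first propagating the smallness of $\Pim-w$ from $S_0$ to $S_T$ before the exit estimate applies.
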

\begin{figure}[H]
\begin{center}
\includegraphics[width=0.85\textwidth]{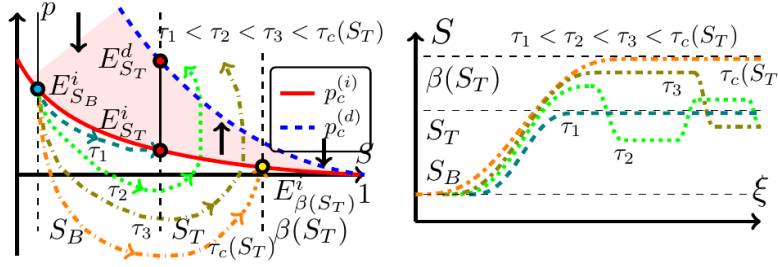}
\end{center}
\caption{(left) Ordering of the orbits in the $S$-$p$ phase plane for $S_*<S_T<\bar{S}$ and $\t\leq \t_c(S_T)$. (right) 
The behaviour of the orbits in the $\xi$-$S$ plane for $\t\leq \t_c(S_T)$.}\label{2PF_fig:CaseSgeqSstar}
\end{figure}

\begin{proof}
\emph{(a)}  
Suppose no $\t_c(S_T)$ exists such that $S_m(\t_c,S_T)=\b(S_T)$, meaning $S_m(\t,S_T)<\b(S_T)$ for all $\t>0$.
Combined with \eqref{2PF_eq:WtauToInfty}, this implies that for large enough $\t$, a $S_1\in [S_T,\b(S_T)]$ 
exists for which $w(S_1)=0$. From \eqref{2PF_eq:fWtauGDE} it is evident that $w(S_T)\leq w(S)$, in particular $w(S_T)<w(S_1)=0$. 
Moreover, \eqref{2PF_eq:WlowerBoundDefPhi} gives the lower bound 
$w(S)\geq w(S_T)\geq \Pim(S_T)-\sqrt{2c\t\Phi(S_T)}\geq -\sqrt{2c\t\Phi(S_T)}$ for all $S\in [S_B,S_m(\t,S_T)]$.
Multiplying both sides of \eqref{2PF_eq:Utau}  by $u$, integrating from $S_B$ to $S_1$ and using the above inequality we get
\begin{align*}
-\dfrac{1}{2}\Pim^2(S_B)&=\int_{S_B}^{S_1} (c\t \Gf(S;S_B,S_T) + \Pim'(S)w(S;c,\t) )dS\\
&\leq  -c\t \Phi(S_1) +(\Pim(S_B)-\Pim(S_1))\sqrt{2c\t\Phi(S_T)}.
\end{align*}
Since $\Phi(S_1)>0$ (as stated in \eqref{2PF_eq:PropOfPhi2}), this leads to a contradiction for $\t\to \infty$. 
Hence, $S_m(\t,S_T)=\b(S_T)$ for some $\t>0$. The uniqueness follows from \Cref{2PF_prop:Wtau}. 

\emph{(b)} The monotonicity and continuity follows from \Cref{2PF_prop:Wtau,2PF_prop:ContinuousDependence} 
and \Cref{2PF_cor:ContinuousDependence}. To show the limit for $S_T\searrow S_*$, assume that $\lim_{S_T\searrow S_*} \t_c(S_T) 
= \t_\infty <\infty$.  Let then $\t>\t_\infty$. \Cref{2PF_prop:PropertiesOfSm} implies that $S_m(\t,S_*)<\b(S_*)$. 
Choose an $S_T>S_*$ such that $\b(S_T)\geq S_m(\t,S_*)$. Since $\t>\t_c(S_T)$, we get that 
$w(S_m(\t,S_*);\t,S_T)\leq w(\b(S_T);\t,S_T)\leq \Pim(\b(S_T))$, implying 
$w(S_m(\t,S_*);\t,S_*)-w(S_m(\t,S_*);\t,S_T)\geq \Pim(S_m(\t,S_*))-\Pim(\b(S_T))$. 
This gives a contradiction when $S_T\searrow S_*$ since the right hand side goes to $\Pim(S_m(\t,S_*))-\Pim(\b(S_*))>0$, 
whereas the left hand side converges to 0 from \Cref{2PF_cor:ContinuousDependence}.

The existence of a $\bar{\t}>0$ is a consequence of the continuity of 
$\t_c$ with $\bar{\t}=\t_m(\bar{S})\geq \bar{\t}_m(\bar{S})$ following from \Cref{2PF_prop:Tau_m}. \qed
\end{proof}

After the preliminary statements we are in a position to consider the solvability of \hyperref[2PF_eq:ODE]{(TW)} 
for different ranges of $S_T$. 

\subsection{Problem \hyperref[2PF_eq:ODE]{(TW)} with $S_B<S_T\leq \bar{S}$}\label{2PF_sec:Ap1}
\begin{figure}[H]
\begin{center}
\includegraphics[width=0.85\textwidth]{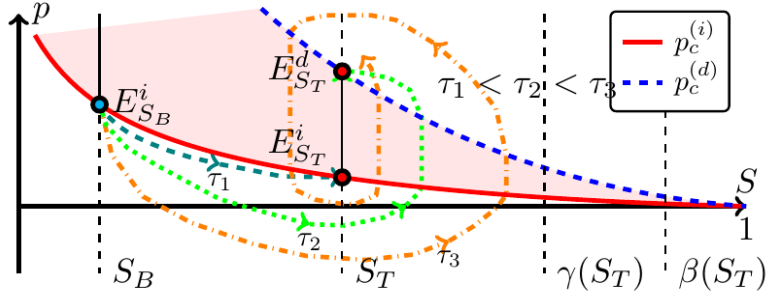}
\caption{The different cases of $S_B<S_T< S_*$. The orbits are plotted for $0<\t_1<\t_i<\t_2<\t_d<\t_3$.}
\label{2PF_fig:CaseStleqSstar}
\end{center}
\end{figure}

We investigate the existence of an orbit connecting $(S_B,\Pim(S_B))$ and the segment $\overline{E^i_{S_T}E^d_{S_T}}$. Defining
\begin{equation}
\t_{j}=\frac{({p^{(j)}_c}'(S_T))^2}{4c \Gf'(S_T;S_B,S_T)}>0, \quad j\in\{i,d\},\label{2PF_eq:ExpressionEigenValues}
\end{equation}
the eigenvalues of the \hyperref[2PF_eq:ODE]{(TW)} system associated with the equilibrium points $E^j_{S_T}$, $j\in\{i,d\}$ are 
$$
\lambda^j_\pm=\frac{{p^{(j)}}'(S_T)}{2c\t}\left[1\pm\sqrt{1-\frac{\t}{\t_j}}\right] \text{ implying } \begin{cases}
E^j_{S_T} \text{ is stable sink for } \t\leq \t_j,\\
E^j_{S_T} \text{ is stable spiral sink for } \t>\t_j.
\end{cases}
$$
This immediately gives $\t_i\geq \t_m$ as no monotone orbit can connect with $E^i_{S_T}$  for $\t>\t_i$. 
The general behaviour of the orbits for $S_T\in (S_B,S_*]$ are stated in

\begin{theorem}
Under the assumptions of Scenario A, consider $S_B$ satisfying \eqref{2PF_eq:SbSt}, $S_T\in (S_B,S_*]$
and $\t_i<\t_d$. Let $(S,p)$ be the orbit  originating from $(S_B,\Pim(S_B))$ satisfying \hyperref[2PF_eq:ODE]{(TW)}. 
Then, with reference to  \Cref{2PF_fig:CaseStleqSstar}, as $\xi\to \infty$ one gets
\begin{itemize}[label=$\bullet$]
\item[(a)] If $0<\t\leq \t_i$, then either $S\to S_T$ and $p\to \Pim(S_T)$ monotonically with respect to 
$\xi$ through $\Him$ (when $\t\leq \t_m$) or the orbit $(S,p)$ goes around $\overline{E^i_{S_T}E^d_{S_T}}$ 
finitely many times and ends up in either $E^i_{S_T}$ or $E^d_{S_T}$ (when $\t_m<\t\leq \t_i$).
\item[(b)] If $\t_i<\t\leq \t_d$, $(S,p)\to E^d_{S_T}$ after finitely many turns around $\overline{E^i_{S_T}E^d_{S_T}}$.
\item[(c)] If $\t_d<\t$, then $(S,p)$ revolves infinitely many times around $\overline{E^i_{S_T}E^d_{S_T}}$
while approaching it and \eqref{2PF_eq:BC1} is satisfied.
\end{itemize}
\label{2PF_theo:SleqSstar}
\end{theorem}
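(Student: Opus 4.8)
The plan is to trace the orbit of \hyperref[2PF_eq:ODE]{(TW)} through the $(S,p)$ phase plane, using throughout the preliminary results of \Cref{2PF_sec:prelim}, and to split according to the size of $\t$. Recall that in Scenario~A the function $\Gf=\Gf(S;S_B,S_T)$ depends on $S$ only, vanishing at $S_B$ and $S_T$, negative on $(S_B,S_T)$ and positive on $(S_T,\b(S_T))$; hence $p'=\Gf(S)$ has a fixed sign on each side of $S_T$, while $S'>0$ in $\Him$, $S'\equiv 0$ in $\H$, $S'<0$ in $\Hdr$, so the vertical segments $\overline{E^i_{S_B}E^d_{S_B}}$ and $\overline{E^i_{S_T}E^d_{S_T}}$ consist entirely of equilibria, an orbit may touch $\Pim$ from below only where $\Gf\geq 0$ (\Cref{2PF_prop:WequalsPimWhenGfpos}), and --- since $p'(S)$ blows up with the wrong sign otherwise --- it may touch $\Pdr$ from above only where $\Gf\leq 0$. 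The orbit leaves $E^i_{S_B}$ into $\Him$ as the graph $p=w(S;\t,S_T)$ on $(S_B,S_m(\t,S_T))$, with $S_T\leq S_m<\g(S_T)\leq\b(S_T)$ by \Cref{2PF_prop:PropertiesOfSm}. If $\t\leq\t_m$, then $S_m=S_T$ by \Cref{2PF_prop:Tau_m}, so the orbit stays in $\Him$ and ends at $E^i_{S_T}$: $S$ increases (as $S'>0$), $p=w(S)$ decreases (as $w'<0$ on $(S_B,S_T)$ since $\Gf<0$ there), and $\Pim-w$ vanishes at most linearly at $S_T$ --- differentiate \eqref{2PF_eq:fWtauGDE}, whose local characteristic equation has real roots precisely because $\t\leq\t_m\leq\t_i$ --- so $\xi=c\t\int dS/(\Pim-w)$ diverges. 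This is the first alternative of~(a).

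For $\t>\t_m$ one has $S_m>S_T$, and I would follow one loop around $\overline{E^i_{S_T}E^d_{S_T}}$: the orbit leaves $\Him$ at $(S_m,\Pim(S_m))$ with $\Gf(S_m)>0$, rises vertically through $\H$ to $(S_m,\Pdr(S_m))$, enters $\Hdr$ as the graph $p=\tilde w(S)$ of the companion equation $\tilde w'=c\t\Gf/(\Pdr-\tilde w)$, and --- by the energy identity $\tfrac12((\Pdr-\tilde w)^2)'=-c\t\Gf-(\Pdr-\tilde w)\Pdr'$ together with $\int_S^{S_m}\Gf>0$ and $S_m<\g(S_T)$ --- must re-touch $\Pdr$ at some $\bar S_m\in(S_B,S_T]$; if $\bar S_m=S_T$ the orbit stops at $E^d_{S_T}$, otherwise it drops through $\H$ to $(\bar S_m,\Pim(\bar S_m))$ and re-enters $\Him$. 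Iterating gives turning saturations $\sigma_0=S_m>S_T$ (on $\Pim$), $\sigma_1=\bar S_m\leq S_T$ (on $\Pdr$), $\sigma_2\geq S_T$ (on $\Pim$), and so on (see \Cref{2PF_fig:CaseStleqSstar}). By uniqueness for the common ODE governing all $\Him$-arcs (resp.\ all $\Hdr$-arcs) --- each such arc starting strictly between $S_B$ and the previous turning saturation, hence staying strictly above the first arc --- one gets $\sigma_0>\sigma_2>\cdots$ and $\sigma_1<\sigma_3<\cdots$; and the continuous Lyapunov function $\mathcal{E}:=\tfrac12\,\mathrm{dist}(p,[\Pim(S),\Pdr(S)])^2-c\t\Phi(S)$, with $\Phi$ as in \eqref{2PF_eq:WlowerBoundDefPhi}, which is nonincreasing along the orbit, constant on the vertical $\H$-segments and strictly decreasing elsewhere, forces (via a LaSalle argument, the orbit's $S$-values eventually staying in a compact subset of $(S_B,\g(S_T))$) the $\omega$-limit set into $\overline{E^i_{S_T}E^d_{S_T}}$ and excludes a residual loop of positive size, so $\sigma_k\to S_T$.

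The trichotomy then follows from the eigenvalue computation preceding the statement. If $\t>\t_d$, both $E^i_{S_T}$ and $E^d_{S_T}$ are spiral sinks, so near the segment $S$ oscillates about $S_T$; the orbit cannot terminate at either endpoint and performs infinitely many loops with $\sigma_k\to S_T$, yielding~(c). If $\t_i<\t\leq\t_d$, then $E^i_{S_T}$ is still a spiral (no termination there) while $E^d_{S_T}$ is a node; once a loop is small enough the orbit enters the one-sided basin of $E^d_{S_T}$ from the $\Hdr$-side and is captured along its stable eigendirection without further oscillation, so only finitely many loops occur --- this is~(b). If $\t_m<\t\leq\t_i$, both endpoints are nodes and the same capture argument applies to whichever of $\Pim,\Pdr$ the shrinking orbit last meets, so the orbit ends at $E^i_{S_T}$ or $E^d_{S_T}$ after finitely many loops --- the second alternative of~(a).

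The main obstacle is this last convergence analysis: producing a Lyapunov functional that is genuinely monotone across a whole loop, given that the vector field is only Lipschitz across $\partial\Him$ and $\partial\Hdr$ and that the linearisation at the target segment is one-sided (since $S'\equiv 0$ in $\H$); and then upgrading ``the orbit enters a small neighbourhood of $\overline{E^i_{S_T}E^d_{S_T}}$'' to the precise trichotomy --- in particular, proving the number of loops is truly finite in the node cases and identifying which endpoint captures the orbit in~(a). A subsidiary technical point is the $\Hdr$-arc of the loop: showing the orbit really returns to $\Pdr$ at some $\bar S_m\in(S_B,S_T]$ rather than escaping to large $p$ or drifting towards $S=S_B$, which needs the energy bound mirroring the $\Him$-analysis of \Cref{2PF_sec:prelim}.
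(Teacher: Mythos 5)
Your argument is correct and follows essentially the same route as the paper, which in fact prints no proof at all for this theorem but delegates it entirely to \cite[Theorem 2.1, Lemmas 2.1 \& 2.2]{mitra2018wetting}: the phase-plane tracing with turning saturations on $\Pim$ and $\Pdr$, the monotone shrinking of the loops, and the node/spiral eigenvalue trichotomy at $E^i_{S_T}$, $E^d_{S_T}$ are exactly the cited mechanism, and your explicit Lyapunov function is a clean repackaging of the $(\Pim-w)^2$ integral identities already used in \Cref{2PF_sec:prelim}. The two obstacles you flag at the end --- the return of the $\Hdr$-arc to $\Pdr$ at some $\bar S_m\in(S_B,S_T]$ and the capture/finiteness argument at the node endpoints --- are precisely the content of the cited lemmas, so nothing in your outline is off-track.
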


\begin{figure}[h!]
\begin{center}
\includegraphics[width=0.85\textwidth]{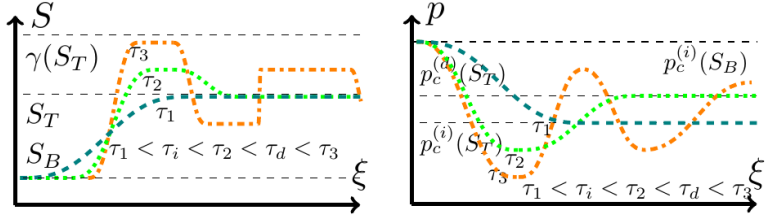}
\end{center}
\caption{Typical behaviour of $S(\xi)$  (left)  and  $p(\xi)$ (right) for different values of $\t$. 
Here profiles for three different $\t$ values are plotted satisfying $0<\t_1<\t_i<\t_2<\t_d<\t_3$. 
The $\xi=0$ coordinate is fixed by \eqref{2PF_eq:DefOfzeta0}.}\label{2PF_fig:Theo1PSxDiagram}
\end{figure}
These statements are demonstrated by arguments from  \cite[Theorem 2.1 and Lemma 2.1 \& 2.2]{mitra2018wetting}. 
We omit the details here. In \Cref{2PF_theo:SleqSstar} we have taken $\t_i<\t_d$ without loss of generality. 
In the $\t_i>\t_d$ case, the roles of the equilibrium points $E^i_{S_T}$ and $E^d_{S_T}$ are reversed. 
The typical behaviour of the $S$ and the $p$ profiles with respect to $\xi$ is given in \Cref{2PF_fig:Theo1PSxDiagram}. 
Both $S$ and $p$ are monotone for $\t<\t_i$, whereas for $\t_i<\t<\t_d$ they have finite number of local extrema and 
$p(+\infty)=\Pdr(S_T)$. For $\t>\t_d$, $S$ has infinitely many decaying local extrema, whereas $p$ has no limit. 
In particular, each $S$ maximum corresponds to a saturation overshoot.  On the other hand, the oscillations in $p$ 
become wider, in line with the assumption $\lim_{\xi\to \infty}p'(\xi)=0$. In this case, the segment $\overline{E^i_{S_T}E^d_{S_T}}$ 
becomes an $\omega$-limit set of the orbit. 

Turning to the case, $S_T\in (S_*,\bar{S})$, we define the two functions which will be used extensively below
\begin{definition}
The functions $\hat{S}_B,\;\check{S}_B:[0,\infty)\to (0,1]$ are such that
\begin{equation*}
\check{S}_B(\t)=\begin{cases}
(\t_c)^{-1}(\t) &\text{ for } \t>\bar{\t},\\
\bar{S} &\text{ for } 0\leq \t\leq \bar{\t},
\end{cases}
\text{ and } \hat{S}_B(\t)=\b(\check{S}_B(\t)).\label{2PF_def:Supdown_i}
\end{equation*}
\end{definition}

Observe that, ${\check{S}_B}(\t)$ is a strictly decreasing function whereas ${\hat{S}_B}(\t)$ is a strictly 
increasing function for $\t>\bar{\t}$ and $\hat{S}_B(\t)=\check{S}_B(\t)=\bar{S}$ for $\t\leq \bar{\t}$. 
This is sketched in \Cref{2PF_fig:Bifurcation1}. Numerically computed $\check{S}_B(\t)$ and  $\hat{S}_B(\t)$
functions are shown in \Cref{2PF_fig:NumResBifurcationDiagram}. 

\begin{remark}
The case when $\b(\a)$ does not intersect $\g(\a)$ is treated in a similar way. However, since orbits may intersect the line segment $\{S=1,p\leq 0\}$ in this case, a multivalued extension of $\Pim$ at $S=1$ needs to be introduced, see \cite{mitra2018wetting,VANDUIJN2018232} 
for further details. With this, one shows that the function $\t_c(S_T)$ is well-defined in $[\tilde{S},\bar{S}]$. 
Then a $\t_{\!_B}>0$ exists such that ${\check{S}_B}(\t_{\!_B})=\tilde{S}$ and ${\hat{S}_B}(\t_{\!_B})=1$. 
The subsequent results remain valid if $\hat{S}_B$ and $\check{S}_B$ are extended by
$$
\check{S}_B(\t_{\!_B})=\tilde{S} \text{ for  } \t>\t_{\!_B}, \text{ and }  \hat{S}_B(\t)=1 \text{ for  } \t>\t_{\!_B}.
$$
\end{remark}
With this in  mind, we define the following sets:
\begin{align}
&\mathcal{A}=\{(S_T,\t):S_B<S_T<\bar{S},\; \t<\t_c(S_T)\},\nonumber\\
&\mathcal{B}=\{(S_T,\t):\t>\bar{\t},\; \check{S}_B(\t)<S_T<\hat{S}_B(\t)\},\nonumber\\
&\mathcal{C}=\{(S_T,\t):\bar{S}<S<S^*,\; \t<\t_c(\b^{-1}(S_T))\}. \label{2PF_eq:solution_classes}
\end{align}
\begin{figure}
\begin{center}
\includegraphics[scale=0.65]{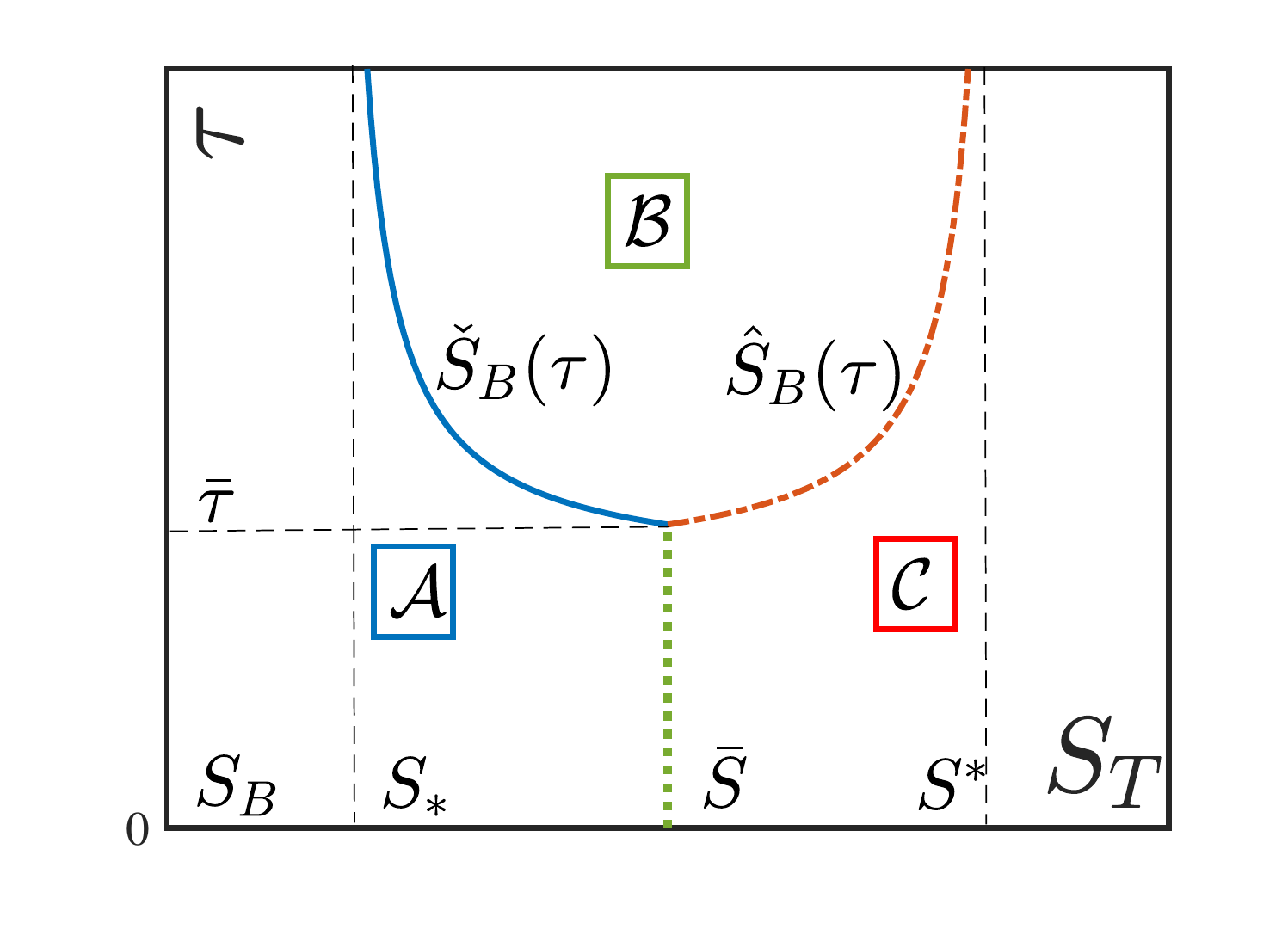}
\end{center}\caption{The sets $\mathcal{A}$, $\mathcal{B}$ and $\mathcal{C}$ and the functions $\check{S}_B(\t)$, 
$\hat{S}_B(\t)$.}\label{2PF_fig:Bifurcation1}
\end{figure}
Observe that, if $S_T<\bar{S}$ then only regions $\mathcal{A}$ and $\mathcal{B}$ are relevant. 
With $S_o$ defined in \ref{2PF_ass:F1}, for  $(S_T,\t)\in \mathcal{A}$ one has
\begin{proposition}
For a fixed $S_B\in (0,S_o)$ and $(S_T,\t)\in \mathcal{A}$, the orbit $(S,p)$ entering $\Him$ from $E^i_{S_B}$ 
behaves according to statements (a),(b) and (c) of \Cref{2PF_theo:SleqSstar}. 
\end{proposition}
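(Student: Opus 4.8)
\emph{Proposed approach.} The plan is to bootstrap from \Cref{2PF_theo:SleqSstar}. The only new phenomenon that can occur once $S_T$ is allowed past $S_*$ is the escape of the orbit to $S=1$ described in \Cref{2PF_prop:PropertiesOfSm}(b), and the defining constraint $\t<\t_c(S_T)$ of $\mathcal{A}$ is exactly what excludes it; everything else is then a verbatim repetition of the analysis underlying \Cref{2PF_theo:SleqSstar}.

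\textbf{Step 1: the range $S_B<S_T\le S_*$.} Here there is nothing new. By \Cref{2PF_prop:PropertiesOfSm}(a) one has $S_m(\t,S_T)<\g(S_T)\le\b(S_T)$ for every $\t>0$, so no finite $\t_c(S_T)$ exists and the condition $\t<\t_c(S_T)$ in the definition of $\mathcal{A}$ is vacuous; the conclusion is literally \Cref{2PF_theo:SleqSstar}.

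\textbf{Step 2: uniform trapping for $S_T\in(S_*,\bar{S})$.} Fix such an $S_T$. First I would establish
$$
S_T\le S_m(\t,S_T)<\b(S_T)<1 \qquad\text{for all }0<\t<\t_c(S_T),
$$
by combining \Cref{2PF_pros:PropOFTauC}(a) (which gives $S_m(\t_c(S_T),S_T)=\b(S_T)$) with the strict $\t$-monotonicity of $S_m$ from \Cref{2PF_prop:Wtau}(a), valid as long as $S_m\le\b(S_T)$, and the lower bound $S_m(\t,S_T)\ge S_T$ from \Cref{2PF_prop:Tau_m}. Consequently the graph $w(\cdot;\t,S_T)$ describing the orbit in $\Him$ lives on an interval $(S_B,S_m(\t,S_T))$ bounded away from $1$, on which $\Gf(\cdot;S_B,S_T)$ has exactly the sign pattern recorded in \eqref{2PF_eq:PropGfsign} for $\tilde{S}<\a<\bar{S}$ (negative on $(S_B,S_T)$, positive on $(S_T,\b(S_T))$; recall $S_*>\tilde{S}$). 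Thus the local phase portrait around the equilibrium segment $\overline{E^i_{S_T}E^d_{S_T}}$ is identical to the one already treated in \Cref{2PF_theo:SleqSstar}, and the orbit never reaches the region $(\b(S_T),1)$ responsible for the escape in \Cref{2PF_prop:PropertiesOfSm}(b).

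\textbf{Step 3: transfer of the trichotomy.} With Step 2 in hand, the arguments of \cite[Theorem 2.1, Lemmas 2.1 \& 2.2]{mitra2018wetting} apply without change: the orbit leaves $E^i_{S_B}$, follows $w(\cdot;\t,S_T)$ monotonically in $S$ up to $S_m(\t,S_T)$, and from then on the eigenvalue classification preceding \Cref{2PF_theo:SleqSstar} (with $\t_j$ from \eqref{2PF_eq:ExpressionEigenValues} and $\t_i\le\t_d$ without loss of generality) dictates the outcome --- a monotone approach to $E^i_{S_T}$ when $\t\le\t_m(S_T)$, finitely many turns around $\overline{E^i_{S_T}E^d_{S_T}}$ ending at $E^i_{S_T}$ or $E^d_{S_T}$ when $\t_m<\t\le\t_i$, finitely many turns ending at $E^d_{S_T}$ when $\t_i<\t\le\t_d$, and infinitely many decaying turns with \eqref{2PF_eq:BC1} satisfied when $\t>\t_d$; the strict decay of successive excursions is the comparison principle behind \Cref{2PF_prop:Wtau} and \Cref{2PF_prop:ContinuousDependence}. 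I expect the only real obstacle to be Step 2: the trapping $S_m(\t,S_T)<\b(S_T)$ must be maintained for \emph{all} $\t<\t_c(S_T)$, including the oscillatory regime $\t>\t_d$ in which the orbit repeatedly re-enters $\Him$, and making this uniform --- so that the escaping scenario of \Cref{2PF_prop:PropertiesOfSm}(b) is genuinely ruled out on $\mathcal{A}$ --- is precisely where the full force of \Cref{2PF_pros:PropOFTauC} together with \Cref{2PF_prop:Wtau,2PF_cor:ContinuousDependence} is needed.
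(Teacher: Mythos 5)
Your proposal is correct and follows essentially the same route as the paper, which in fact states this proposition without a separate proof, treating it exactly as you do: the condition $\t<\t_c(S_T)$ together with \Cref{2PF_pros:PropOFTauC}(a) and the monotonicity in \Cref{2PF_prop:Wtau}(a) traps $S_m(\t,S_T)$ strictly below $\b(S_T)$, after which the trichotomy of \Cref{2PF_theo:SleqSstar} carries over via the arguments of \cite{mitra2018wetting}. (One minor point: the lower bound $S_m(\t,S_T)\ge S_T$ follows from \Cref{2PF_prop:WequalsPimWhenGfpos} and the sign pattern \eqref{2PF_eq:PropGfsign} rather than from \Cref{2PF_prop:Tau_m}, but this does not affect the argument.)
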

We discuss the remaining situations, $(S_T,\t)\in\mathcal{B}$ and $(S_T,\t)\in\mathcal{C}$ in the next section.

\subsection{$(S_T,\t)\not\in \mathcal{A}$}\label{2PF_sec:notInA}
Since $S_T>\check{S}_B(\t)$, a TW cannot connect $S_B$ and $S_T$. However, a different class of waves is possible when $(S_T,\t)\in \mathcal{B}$. 

\begin{proposition}
For a fixed $S_B\in (0,S_o)$ and $(S_T,\t)\in \mathcal{B}$, consider the system 
\begin{align}\begin{cases}
S'=\dfrac{1}{c_d\t}\F(S,p),\\[.2em]
p'=\Gf(S;\hat{S}_B(\t),S_T)
,\end{cases} \text{ with } c_d=\frac{F(\hat{S}_B(\t))-F(S_T)}{\hat{S}_B(\t) - S_T}. \label{2PF_eq:ModifiedPeq}
\end{align}
For this system an orbit $(S_d, p_d)$ exists that connects $E^d_{\hat{S}_B(\t)}$ for $\xi\to -\infty$ to 
$\overline{E^i_{S_T}E^d_{S_T}}$ for $\xi\to \infty$.\label{2PF_prop:DraniageWaveExists}
\end{proposition}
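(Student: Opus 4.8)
The plan is to follow, in mirror image, the construction of the infiltration orbit $w(S;\t,S_T)$ of \Cref{2PF_sec:prelim}. I would first track the \emph{unique} orbit of \eqref{2PF_eq:ModifiedPeq} emanating from $E^d_{\hat{S}_B(\t)}$, and then show that it is absorbed by the equilibrium segment $\overline{E^i_{S_T}E^d_{S_T}}$. Note that $c_d>0$, since $F$ is strictly increasing by \ref{2PF_ass:F1} and $\hat{S}_B(\t)>S_T$ on $\mathcal{B}$. Arguing as in \cite{mitra2018wetting}, an orbit can leave the line of equilibria $\overline{E^i_{\hat{S}_B(\t)}E^d_{\hat{S}_B(\t)}}$ only through its uppermost point $E^d_{\hat{S}_B(\t)}$, into $\Hdr$. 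There $\F(S,p)=\Pdr(S)-p<0$, so $S'<0$ and the orbit is monotone in $S$; hence it may be written as a graph $p=w_d(S)$ on an interval ending at $\hat{S}_B(\t)$, with $w_d(\hat{S}_B(\t))=\Pdr(\hat{S}_B(\t))$ and
\[
 w_d'(S)=\frac{c_d\t\,\Gf(S;\hat{S}_B(\t),S_T)}{\Pdr(S)-w_d(S)},
\]
the drainage analogue of \eqref{2PF_eq:fWtauGDE}. Local existence, uniqueness, and $w_d>\Pdr$ just left of $\hat{S}_B(\t)$ follow verbatim from the cited arguments.

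The decisive quantitative input is the sign of $\Gf(\cdot;\hat{S}_B(\t),S_T)$ on $(S_T,\hat{S}_B(\t))$. Since $(S_T,\t)\in\mathcal{B}$ forces $S_T>\check{S}_B(\t)\geq S_*>S_o$, the interval $[S_T,\hat{S}_B(\t)]$ lies in the concave branch of $F$ (as for the Brooks--Corey nonlinearities of \Cref{2PF_rem:fhF}). As $\ell(\cdot;\hat{S}_B(\t),S_T)$ is exactly the chord of $F$ joining $(S_T,F(S_T))$ and $(\hat{S}_B(\t),F(\hat{S}_B(\t)))$, concavity yields $F>\ell$ there, i.e. $\Gf(S;\hat{S}_B(\t),S_T)>0$ for $S_T<S<\hat{S}_B(\t)$; this is the drainage mirror of \eqref{2PF_eq:PropGfsign}. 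Integrating the identity $\big((w_d-\Pdr)^2\big)'=-2c_d\t\,\Gf-2(w_d-\Pdr)\,\Pdr'$ from $S$ up to $\hat{S}_B(\t)$ then gives the a priori bound $(w_d-\Pdr)^2(S)\leq 2c_d\t\,\Phi_d(S)$ with $\Phi_d(S):=\int_S^{\hat{S}_B(\t)}\Gf$, the counterpart of \eqref{2PF_eq:WlowerBoundDefPhi}; since $\Gf>0$ and integrable on $(S_T,\hat{S}_B(\t))$ (which stays away from $S=1$ because $\hat{S}_B(\t)<1$), $\Phi_d$ is finite and positive there, so $w_d$ cannot blow up. Combined with the drainage version of \Cref{2PF_prop:WequalsPimWhenGfpos} (the orbit can re-touch $\Pdr$ only where $\Gf\leq0$), this shows the orbit remains in $\Hdr$ all the way down to $S=S_T$.

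It remains to identify the limit as $\xi\to\infty$. At $S=S_T$ one has $\Gf(S_T;\hat{S}_B(\t),S_T)=0$, so $S_T$ is an equilibrium abscissa and the whole segment $\overline{E^i_{S_T}E^d_{S_T}}$ consists of rest points. Exactly as in \Cref{2PF_theo:SleqSstar}, the orbit either descends monotonically onto $E^d_{S_T}$ or, once near the segment, winds around it and converges; the transition is governed by the drainage analogues of the critical values \eqref{2PF_eq:ExpressionEigenValues}, now formed with $c_d$ and $\Gf'(S_T;\hat{S}_B(\t),S_T)=\big(F'(S_T)-c_d\big)/h(S_T)>0$, which make $E^d_{S_T}$ a stable node for small $\t$ and a stable spiral for large $\t$. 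In either case the orbit $(S_d,p_d)$ connects $E^d_{\hat{S}_B(\t)}$ at $\xi\to-\infty$ to $\overline{E^i_{S_T}E^d_{S_T}}$ at $\xi\to\infty$, which is the assertion.

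The main obstacle is this last step: showing that after descending to $S=S_T$ the orbit is genuinely captured by the $S_T$-segment rather than crossing below it and drifting to smaller saturations. Because $\Gf$ changes sign at $S_T$ (positive above, negative just below, by the concavity argument) while $\F$ changes sign across the segment, the vector field rotates around this degenerate line of equilibria, and one must transfer the rotational/absorption analysis of \cite{mitra2018wetting} underlying \Cref{2PF_theo:SleqSstar} to the modified system \eqref{2PF_eq:ModifiedPeq}. A secondary but essential point is the sign determination of $\Gf$, i.e. verifying that $[S_T,\hat{S}_B(\t)]$ lies in the concave branch of $F$; this is precisely where the placement $S_T>\check{S}_B(\t)\geq S_*$ and the convex--concave structure \ref{2PF_ass:F1} enter in an essential way.
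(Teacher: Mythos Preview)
Your approach is genuinely different from the paper's, and it has a concrete error in a load-bearing step. You justify $\Gf(\,\cdot\,;\hat S_B(\t),S_T)>0$ on $(S_T,\hat S_B(\t))$ by claiming that $(S_T,\t)\in\mathcal B$ forces $S_T>\check S_B(\t)\geq S_*>S_o$, hence $[S_T,\hat S_B(\t)]$ lies in the concave branch of $F$. The inequality $S_*>S_o$ is false: the paper's own numerical example has $S_*\approx 0.4111<S_o\approx 0.4393$, so for large $\t$ one can have $S_T\in(S_*,S_o)$ and the interval straddles the inflection point. Your concavity argument then collapses. The sign conclusion happens to be correct, but for a different reason: since $(\check S_B,F(\check S_B))$ and $(\hat S_B,F(\hat S_B))$ lie on the same line $\ell_0$ through $(S_B,F(S_B))$ with slope $c_i>c_d$, one has $F(\check S_B)-\ell(\check S_B)=\ell_0(\check S_B)-\ell(\check S_B)<0$; the convex--concave root pattern for $F-\ell$ then forces the third root (if any) to lie to the left of $S_T$, yielding $F>\ell$ on $(S_T,\hat S_B)$. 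Without this repair your a~priori bound and the ``re-touch only where $\Gf\leq 0$'' step are unjustified.

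For comparison, the paper does not argue via an a~priori bound on $w_d$ at all. It first considers the orbit of the \emph{original} \hyperref[2PF_eq:ODE]{(TW)} system (wave-speed $c_i$) leaving $E^d_{\hat S_B(\t)}$ into $\Hdr$, assumes it reaches the line $S=S_B$, and builds a closed region $\Omega$ bounded by this orbit, the infiltration orbit from $E^i_{S_B}$ to $E^i_{\hat S_B(\t)}$, and two vertical equilibrium segments. Applying the divergence theorem to the vector field $(\tfrac{1}{c\t}\F,\Gf)$ on $\Omega$ gives a sign contradiction (the boundary flux is nonnegative while $\mathrm{div}$ is strictly negative off $\H$), so this orbit must hit $\Pdr$ at some $S>S_B$. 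The desired drainage orbit with speed $c_d<c_i$ is then controlled by the monotonicity in $c$ established in \Cref{2PF_prop:Wtau}, and only the final convergence to $\overline{E^i_{S_T}E^d_{S_T}}$ is deferred to the machinery behind \Cref{2PF_theo:SleqSstar}. This Bendixson-type argument directly rules out escape toward small saturations---something your a~priori bound does not address once the orbit overshoots $S_T$---whereas your route, once the sign of $\Gf$ is properly established, is closer in spirit to the infiltration analysis of \Cref{2PF_sec:prelim} and needs less auxiliary construction.
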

\begin{proof}
Upon inspection of the eigen-directions for the system \eqref{2PF_eq:ModifiedPeq} around the equilibrium point $E^d_{\hat{S}_B(\t)}$  
one concludes that there is indeed  
an orbit $(S_d,p_d)$ that connects to $E^d_{\hat{S}_B(\t)}$ as $\xi\to -\infty$ from the set $\Hdr$ defined in \eqref{2PF_eq:SetHdef}. 
Moreover, from the direction of the orbits in this case, as shown in \Cref{2PF_fig:CaseBorbits} (left), 
it is apparent that after leaving $E^d_{\hat{S}_B(\t)}$, $S_d$ decreases monotonically  till the orbit either 
hits the curve $p=\Pdr(S)$ for some $S\leq S_T$ or exits $\{S>S_B\}$ through the line $S=S_B$. 
We prove that it is not possible for the orbit to escape through $S=S_B$.
\begin{figure}[h!]
\begin{center}
\includegraphics[width=0.975\textwidth]{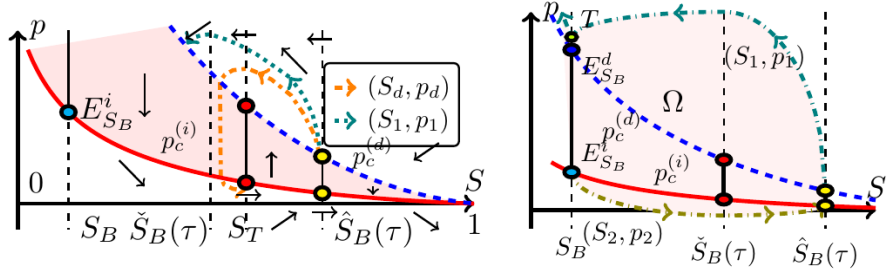}
\end{center}
\caption{(left) The direction of orbits for the system \eqref{2PF_eq:ODES}, \eqref{2PF_eq:ModifiedPeq} and the orbits
$(S_d,p_d)$ and $(S_1,p_1)$. Here the orbit $(S_d,p_d)$ connects $E^d_{\hat{S}_B(\t)}$ 
and $E^i_{S_T}$. (right) The domain $\Om$ used in the divergence argument for the hypothetical case where $(S_1,p_1)$ 
crosses the line $S=S_B$.}\label{2PF_fig:CaseBorbits}
\end{figure}
To show this, consider the orbit $(S_1,p_1)$ that satisfies the original \hyperref[2PF_eq:ODE]{(TW)} equations and enters 
$\Hdr$ from $E^d_{\hat{S}_B(\t)}$. We show that this orbit cannot cross the line $S=S_B$. The 
divergence argument presented in \cite{cuesta2000infiltration,VANDUIJN2018232,mitra2018wetting} is used for this purpose. 
To elaborate, assume that $(S_1,p_1)$ intersects the line $S=S_B$ at $T$. Consider the region $\Om$, enclosed by the segments 
$\overline{E^i_{S_B}T}$, $\overline{E^i_{\hat{S}_B(\t)}E^d_{\hat{S}_B(\t)}}$, the orbit $(S_1,p_1)$  and the orbit $(S_2,p_2)$ that 
satisfies \hyperref[2PF_eq:ODE]{(TW)} and connects $E^i_{S_B}$ to $E^i_{\hat{S}_B(\t)}$, see \Cref{2PF_fig:CaseBorbits} 
(right). Introducing the vector-valued function $\vv{R}(S,p)=(\frac{1}{c\t}\F(S,p),\Gf(S;S_B,S_T))$ and deduces from \eqref{2PF_eq:Psi},
$$
\mathrm{div} \vv{R}=\frac{1}{c\t}\frac{\p \F}{\p S}(S,p)=\frac{1}{c\t}
\begin{cases}
\Pim'(S) &\text{ in } \Him,\\[-.2em]
0  &\text{ in } \H,\\[-.2em]
\Pdr'(S) &\text{ in } \Hdr.
\end{cases}
$$
This gives a contradiction when the divergence theorem is applied  to $\vv{R}$ in the domain $\Om$: the integral 
of $\vv{R}$ over $\p\Om$ is non-negative whereas $\int_\Om \mathrm{div} \vv{R} <0$ from \ref{2PF_P:Pc} and  
\Cref{2PF_fig:CaseBorbits} (right). Hence, the orbit $(S_1,p_1)$ intersects $\Pdr(S)$ at some $S \in (S_B,\check{S}_B(\t)]$. 

The wave-speed corresponding to the orbit $(S_d,p_d)$ satisfies 
\begin{equation}
c_d<\tfrac{F(\hat{S}_B(\t))-F(S_B)}{\hat{S}_B(\t)-S_B} =c_i,
\end{equation}
$c_i$ being the speed of both $(S_1,p_1)$ and $(S_2,p_2)$ waves. Hence, by the continuity of the orbits with respect to $c$, 
as shown in \Cref{2PF_prop:Wtau}, it is evident that $(S_d,p_d)$ intersects $\Pdr(S)$ for some $S_d>S_B$. From here, 
the rest of the proof is identical to the proof of \Cref{2PF_theo:SleqSstar}, and follows the arguments in 
\cite[Theorem 2.1 and Lemma 2.1 \& 2.2]{mitra2018wetting}.
\end{proof}
From the results of \Cref{2PF_theo:SleqSstar} we further state 
\begin{corollary}
The orbit $(S_d,p_d)$ can monotonically go to $E^d_{S_T}$ only when $\t\leq \frac{(\Pdr'(S_T))^2}{4c_d \Gf'(S_T;\hat{S}_B(\t),S_T)}$. 
For $\t$ large enough, the orbit $(S_d,p_d)$ goes around  $\overline{E^i_{S_T}E^d_{S_T}}$ infinitely many times while approaching it, 
and $\lim\limits_{\xi\to \pm\infty}p_d'(\xi)=0$.\label{2PF_cor:DrainageOscillation}
\end{corollary}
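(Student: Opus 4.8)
The plan is to observe that the orbit $(S_d,p_d)$ solves an autonomous planar system of exactly the same structure as \hyperref[2PF_eq:ODE]{(TW)}, namely \eqref{2PF_eq:ModifiedPeq}, obtained from \eqref{2PF_eq:ODES}--\eqref{2PF_eq:ODEp} by the substitutions $c\mapsto c_d$ and $\Gf(\cdot\,;S_B,S_T)\mapsto\Gf(\cdot\,;\hat{S}_B(\t),S_T)$. Since $\hat{S}_B(\t)>S_T$ and $F$ is increasing by \ref{2PF_ass:F1}, we have $c_d>0$, so the whole local analysis performed for \Cref{2PF_theo:SleqSstar} at the equilibria on $\overline{E^i_{S_T}E^d_{S_T}}$ transfers verbatim; the only task is to track how the threshold $\t_d$ of \eqref{2PF_eq:ExpressionEigenValues} transforms under these substitutions.

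First I would linearise \eqref{2PF_eq:ModifiedPeq} at $E^d_{S_T}=(S_T,\Pdr(S_T))$. In $\Hdr$ one has $\F(S,p)=\Pdr(S)-p$, and the chord defining $c_d$ passes through $(S_T,F(S_T))$, so $\Gf(S_T;\hat{S}_B(\t),S_T)=0$; the Jacobian thus has trace $\Pdr'(S_T)/(c_d\t)$ and determinant $\Gf'(S_T;\hat{S}_B(\t),S_T)/(c_d\t)$. Its eigenvalues are
\begin{equation*}
\l_\pm=\frac{\Pdr'(S_T)}{2c_d\t}\left[1\pm\sqrt{1-\frac{\t}{\t^{(d)}}}\right],\qquad \t^{(d)}:=\frac{(\Pdr'(S_T))^2}{4c_d\,\Gf'(S_T;\hat{S}_B(\t),S_T)}>0,
\end{equation*}
which is precisely the threshold appearing in the statement. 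Because $\Pdr'(S_T)<0$ and $c_d>0$, both eigenvalues have negative real part, so $E^d_{S_T}$ is always attracting; it is a stable node—permitting a monotone arrival—exactly when the discriminant is nonnegative, i.e. $\t\leq\t^{(d)}$, and a stable spiral when $\t>\t^{(d)}$. This yields the first assertion as a necessary condition: a monotone approach of $(S_d,p_d)$ to $E^d_{S_T}$ forces $\t\leq\t^{(d)}$.

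For the second assertion I would take $\t$ larger than both critical values, the one above and its analogue $\t^{(i)}=(\Pim'(S_T))^2/(4c_d\,\Gf'(S_T;\hat{S}_B(\t),S_T))$ at $E^i_{S_T}$, so that both endpoints of the segment are spiral sinks. Since every point of $\overline{E^i_{S_T}E^d_{S_T}}$ is an equilibrium of \eqref{2PF_eq:ModifiedPeq} by the structure of $\F$, neither endpoint can capture the orbit after finitely many turns; invoking the trapping-and-winding arguments of \cite[Theorem 2.1, Lemmas 2.1 \& 2.2]{mitra2018wetting} and \cite[Lemma 4.5]{van2007new}, already used in the proof of \Cref{2PF_theo:SleqSstar}(c), the orbit must revolve around the segment infinitely often with $\overline{E^i_{S_T}E^d_{S_T}}$ as $\om$-limit set. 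Finally, $p_d'=\Gf(S_d;\hat{S}_B(\t),S_T)\to0$ as $\xi\to+\infty$ because $S_d\to S_T$ and $\Gf(S_T;\hat{S}_B(\t),S_T)=0$, while as $\xi\to-\infty$ the orbit tends to $E^d_{\hat{S}_B(\t)}$ where $\Gf(\hat{S}_B(\t);\hat{S}_B(\t),S_T)=0$ as well; hence $\lim_{\xi\to\pm\infty}p_d'(\xi)=0$.

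The main obstacle is not the linearisation, which is routine, but the global claim that for large $\t$ the orbit genuinely spirals \emph{infinitely} often and accumulates on the entire equilibrium segment rather than escaping or stalling. This is the delicate part handled by the cited Poincar\'e--Bendixson / invariant-region arguments, and one must additionally verify that the preliminary monotone descent established in \Cref{2PF_prop:DraniageWaveExists} delivers $(S_d,p_d)$ into a neighbourhood of $\overline{E^i_{S_T}E^d_{S_T}}$ in which those arguments apply with the modified speed $c_d$.
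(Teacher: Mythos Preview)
Your proposal is correct and follows exactly the approach the paper intends: the paper offers no proof beyond the sentence ``From the results of \Cref{2PF_theo:SleqSstar} we further state\ldots'', and you have supplied precisely the linearisation at $E^d_{S_T}$ and the reduction to the cited arguments of \cite{mitra2018wetting} that this sentence encodes. The only remark is that the threshold you denote $\t^{(d)}$ depends on $\t$ itself through $\hat{S}_B(\t)$ and $c_d$, so the condition $\t\leq\t^{(d)}$ is implicit rather than an explicit bound---but this is how the paper states it as well.
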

Observe that, if $(S_T,\t)\in \mathcal{C}$ then travelling waves do not exist between $S_T$ and $\hat{S}_B(\t)$ 
since both are in the concave part of $F$ with $S_T>\hat{S}_B(\t)$. Thus  we have exhausted all the possibilities of 
connecting $S_B$ and $S_T$ with \Cref{2PF_theo:SleqSstar} and \Cref{2PF_prop:DraniageWaveExists}.

\subsection{Entropy solutions to hyperbolic conservation laws}
\begin{figure}[H]
\begin{center}
\includegraphics[width=0.97\textwidth]{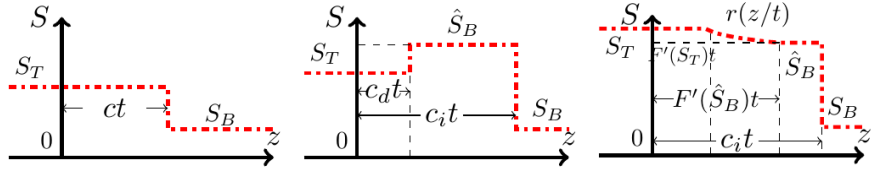}
\end{center}
\caption{ The entropy solutions for (left) $(S_T,\t)\in \mathcal{A}$, (center) $(S_T,\t)\in \mathcal{B}$ and (right) 
$(S_T,\t)\in \mathcal{C}$. Note that the solutions in the center and the right figures include non-classical shocks.}
\label{2PF_fig:EntropySol}
\end{figure}
Under the conditions of Scenario A, we consider the Riemann problem
\begin{subequations}\label{2PF_eq:LimitingBL}
\begin{align}
&\dfrac{\p S}{\p t} + \dfrac{\p F(S)}{\p z}=0 \text{ in }\R\times[0,\infty) \label{2PF_eq:RiemannScenarioAgde}\\
&\text{ with } S(z,0)=
\begin{cases}
S_T &\text{ for } z<0,\\
S_B&\text{ for } z>0.
\end{cases}
\end{align}
\end{subequations} 
In the context of the viscous model discussed in this paper, we consider the Buckley-Leverett 
equation \eqref{2PF_eq:RiemannScenarioAgde} as the limit of System \eqref{2PF_eq:FullRegularisedSystem} for $N_c\searrow 0$. 
As a consequence, we only take into account those shock solutions of \eqref{2PF_eq:RiemannScenarioAgde} that have a viscous
profile in the form of a travelling wave satisfying \hyperref[2PF_eq:ODE]{(TW)}. Such shocks are called admissible because 
they arise as the $N_c\to 0$ limit of TWs. In this sense, the entropy condition for shocks satisfying \eqref{2PF_eq:RiemannScenarioAgde} 
are equivalent to existence conditions for travelling waves satisfying \hyperref[2PF_eq:ODE]{(TW)}. This may lead to non-classical shocks 
violating the well-known Oleinik entropy conditions, see e.g. \cite{van2007new}.

Here, we assume
\begin{equation}
0<S_B<S_T<1,\label{2PF_eq:CaseAentropySbSt}
\end{equation}
which is more general compared to \eqref{2PF_eq:SbSt} where the additional constraint of $S_B<S_o$ was imposed. 
This generalization is possible since $S_B>S_o$ simply implies that the sets $\mathcal{A},\;\mathcal{B}$ are empty. 
Our analysis can also be applied to derive the entropy conditions for the case $S_B>S_T$, however, for simplicity we 
restrict our discussion to \eqref{2PF_eq:CaseAentropySbSt}.  

\subsubsection*{\underline{$(S_T,\t)\in \mathcal{A}$}}\vspace{-.2em}
As in the usual Buckley-Leverett case (i.e. without dynamic capillarity and hysteresis in the regularised models) 
the solution is given by
\begin{equation}
S(z,t)=\begin{cases}
S_T &\text{ for } z<ct,\\
S_B &\text{ for } z>ct,
\end{cases} \text{ where } c=\frac{F(S_T)-F(S_B)}{S_T - S_B}.\label{2PF_eq:CaseAentropyStauinA}
\end{equation}
Here, the shock satisfies the classical Oleinik condition.

\subsubsection*{$\underline{(S_T,\t)\in \mathcal{B}}$}\vspace{-.2em}
In this case the admissible solution is composed of two shocks: an infiltration shock from $S_B$ to $\hat{S}_B(\t)$, followed by a drainage shock from $\hat{S}_B(\t)$ to $S_T$.
\begin{equation}
S(z,t)=\begin{cases}
S_T &\text{ for } z<c_dt,\\
\hat{S}_B(\t) &\text{ for } c_d t<z<c_i t,\\
S_B &\text{ for } z>c_i t,
\end{cases} \text{ with } 
\begin{cases}
 c_i=\frac{F(\hat{S}_B(\t))-F(S_B)}{\hat{S}_B(\t)-S_B},\\[.2em]
 c_d=\frac{F(\hat{S}_B(\t))-F(S_T)}{\hat{S}_B(\t)-S_T}.
 \end{cases}\label{2PF_eq:CaseAentropyStauinB}
\end{equation}
 Note that this solution violates the Oleinik condition \cite{oleinik1957discontinuous}. Both shocks are under compressive \cite{lefloch2002hyperbolic}.
\subsubsection*{$\underline{(S_T,\t)\in \mathcal{C}}$}\vspace{-.2em}
The solution in this case violates again the Oleinik entropy condition. It consists of an 
infiltration shock from $S_B$ to $\hat{S}_B(\t)$ followed by a rarefaction wave from $\hat{S}_B(\t)$  to $S_T$,
\begin{equation}
S(z,t)=\begin{cases}
S_T &\text{ for } z<F'(S_T)t,\\
r(z/t) &\text{ for } F'(S_T) t<z<F'(\hat{S}_B(\t)) t,\\
\hat{S}_B(\t) &\text{ for } F'(\hat{S}_B(\t)) t<z<c_i  t,\\
S_B &\text{ for } z>c_i t,
\end{cases} \label{2PF_eq:SolCaseC}
\end{equation}
with $r(\cdot)$ satisfying
\begin{equation}
F'(r(\zeta))=\zeta, \text{ for } F'(S_T)\leq \zeta \leq F'(\hat{S}_B(\t)).\label{2PF_eq:RWforScenarioA}
\end{equation}
Since $F$ is concave for $S\in [\hat{S}_B(\t),S_T]$, $F'$ is monotone implying that $r(\cdot)$ is well-defined. We observe that in the last two cases the solution features a plateau-like region. This plateau appears
and grows in time since the speeds of the drainage shock and of the end point of the rarefaction wave are lesser 
than the speed of the infiltration shock. Interestingly, the saturation of the plateau only depends on $\Pim$ and
not on $\Pdr$. To be more specific, although the viscous profile consisting of a travelling wave connecting 
$E^d_{\hat{S}_B}$ and $\overline{E^i_{S_T}E^d_{S_T}}$ depends on $\Pdr$, the shock solution resulting from it, 
in the hyperbolic limit, does not.  However, the role of the drainage curve in the entropy solutions become evident
in Scenario B, which is discussed in the next section.

In the absence of hysteresis and for linear higher order terms, which correspond to constant $k$ and linear $p_c$-$S$ 
dependence, in \cite[Section 6]{van2007new} it is proved that the non-standard entropy conditions discussed here are 
entropy dissipative for the entropy $U(s)=\frac{1}{2}s^2$. However, such an analysis is beyond the scope of this paper. 
The solution profiles for the Riemann problem are shown in \Cref{2PF_fig:EntropySol}.

\subsection*{Extension to the non-monotone $F$ case}
The analysis so far can be extended to the case where $N_g$ is large resulting in $F$ being non-monotone. If $S_F\in (0,1)$ is the saturation where $F(S)$ attains its maximum (see \Cref{2PF_rem:fhF} and \Cref{2PF_fig:fhF}), then the results obtained so far cover the case when $S_T$ and $S^*$ are below $S_F$. However, if $S_T>S_F$ then  the TW study has to be conducted also from a $S_T$ perspective, not only from the $S_B$ one. In this scenario, since fronts having negative speeds and thus moving towards $S_T$ become possible, one has to consider the functions $\hat{S}_T(\t)$, $\check{S}_T(\t)$ for a fixed $S_T$, similar to $\hat{S}_B(\t)$, $\check{S}_B(\t)$ from \Cref{2PF_def:Supdown_i} for fixed $S_B$. Due to the symmetry in the behaviour of the fronts approaching $S_B$, respectively $S_T$, some of the results obtained so far extend straightforwardly to the non-monotone case. However, a detailed analysis is much more involved and therefore left for future research because of the following two reasons:
\begin{enumerate}[label=(\alph*)]
\item Depending on the relative positions of $S_B$, $\hat{S}_B$, $S_T$ and $\hat{S}_T$, there are many sub-cases to consider. In this case up to three shocks are possible, traveling both forward and backward. Which of these shocks are admissible and how they are connected requires further analysis. 
\item For a non-monotone $F$, when considering the hyperbolic limit in the absence of hysteresis or dynamic effects, the entropy solutions may include rarefaction waves with endpoints moving in opposite directions, forward and backward. When capillary hysteresis is included, preliminary numerical results have provided solutions incorporating two rarefaction waves, one with endpoints travelling backward and another one with endpoints travelling forward, and a stationary shock at $z=0$. Such solutions still need to be analysed further.
\end{enumerate}

\section{Hysteretic  relative permeabilities and small $N_g$ (Scenario B)}\label{2PF_sec:CaseB}
For Scenario B, the flux function $F(S,p)$ is composed of $F^{(j)}=f^{(j)}+ N_g h^{(j)}$ for $j\in\{i,d\}$ 
and $\bar{F}=\bar{f} + N_g \bar{h}$ such that
\begin{equation}
F(S,p)=
\begin{cases}
F^{(d)}(S) &\text{  if } (S,p)\in \Hdr,\\
\bar{F}(S,p) &\text{  if } (S,p)\in \H,\\
F^{(i)}(S) &\text{  if } (S,p)\in \Him.
\end{cases}
\label{2PF_eq:FhysRelation}
\end{equation}
It has the following properties
\begin{enumerate}[label=(A\theTPFassumption)]
 \item $F\in C(\W)$, $\bar{F}\in C^2(\H)$, $\p_p F>0$ in $\H$ and $F^{(i)}$, $F^{(d)}$ satisfy 
 properties stated for $F$ in \ref{2PF_ass:F1}. Additionally, $F^{(d)}(S)>F^{(i)}(S)$ for $0<S<1$.
  \label{2PF_ass:F3}
\stepcounter{TPFassumption}
\end{enumerate}
In this scenario, $S_B$ can be taken in the entire interval $(0,1)$ and $p_B$ can be chosen 
independently as long as $(S_B,p_B)\in \H$, i.e.
\begin{equation}
0<S_B<1 \text{ and } p_B\in [\Pim(S_B), \Pdr(S_B)].\label{2PF_eq:PerHysSb}
\end{equation}
This is different from Scenario A where $S_B$ is restricted to the interval $(0,S_o)$ and $p_B$ is fixed to  $p_B=\Pim(S_B)$.

We first introduce some notation.
\begin{definition}
For $k\in\{B,T\}$ let $E_k= (S_k,p_k)$ and $U_k= (S_k,F(S_k,p_k))$ (see \Cref{2PF_fig:hysPlot1} (left)). 
We define the saturations $\bar{S}_j$, $j\in \{i,d\}$ as the $S$-coordinates of the tangent points to $F^{(j)}(S)$ 
from $U_B$ such that $\bar{S}_i\geq S_B$ and $\bar{S}_d\leq S_B$.\label{2PF_def:FhysQuatities}
\end{definition}
Observe that, the saturations $\bar{S}_j$, for $j\in \{i,d\}$, are functions of $U_B$. The properties 
of $F^{(j)}$ further ensure that they are well defined. If $S_B$ is such that ${F^{(i)}}''(S_B)\leq 0$ and $p_B= \Pim(S_B)$ 
then $\bar{S}_i = S_B$. Similarly if ${F^{(d)}}''(S_B)\geq 0$ and $p_B= \Pdr(S_B)$ then $\bar{S}_d = S_B$. 
 
The existence of travelling waves is analysed for the following two cases:
\begin{align*}
\text{ Case (i): } S_B<S_T\leq \bar{S}_i, \text{ and } \text{ Case (ii) : } \bar{S}_d\leq S_T<S_B.
\end{align*}

\begin{figure}[H]
\begin{center}
\includegraphics[width=0.95\textwidth]{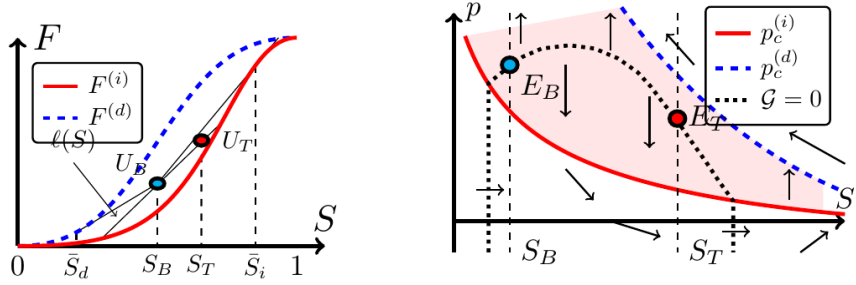}
\end{center}
\caption{(left) The graphs of $F^{(i)}$ and $F^{(d)}$, together with the saturations $\bar{S}_i,\;\bar{S}_d$ and 
the points $U_B,\;U_T$. (right) The orbit directions for Case (i) for two equilibrium points $E_B$ and $E_T$. 
The black dotted curve represents points where $\Gf(S,p)=0$, implying $p'=0$.}\label{2PF_fig:hysPlot1}
\end{figure}
\noindent
Regarding the choice of $p_T$, we have the following 
\begin{proposition}
Let $S_B$ and $S_T$ be as in Case (i) or Case (ii). Then any solution of \hyperref[2PF_eq:ODE]{(TW)} that connects $E_B$ and $E_T$ can only exist if $p_T=\Pim(S_T)$ or $p_T=\Pdr(S_T)$.\label{2PF_prop:pTrestriction}
\end{proposition}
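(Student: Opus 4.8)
The plan is to argue by contradiction: assume that $E_T=(S_T,p_T)$ lies in the open interior of the hysteresis strip, i.e.\ $\Pim(S_T)<p_T<\Pdr(S_T)$, and derive a contradiction with the existence of a \hyperref[2PF_eq:ODE]{(TW)}-orbit connecting $E_B$ to $E_T$. Throughout I use only that $S_T\in(0,1)$ and $S_B\neq S_T$, which hold in both Case (i) and Case (ii).

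First I would observe that the limit point $E_T$ is necessarily an equilibrium of \hyperref[2PF_eq:ODE]{(TW)}. Since $\mathcal{F}$ is continuous on the whole strip $\mathcal{W}$ (its three branches in \eqref{2PF_eq:Psi} agree on the curves $p=\Pim(S)$ and $p=\Pdr(S)$) and $S(\xi)\to S_T$ is finite, the equation $S'=\tfrac{1}{c\tau}\mathcal{F}(S,p)$ forces $S'(\xi)\to 0$ as $\xi\to\infty$, hence $\mathcal{F}(E_T)=0$; this already gives $p_T\in[\Pim(S_T),\Pdr(S_T)]$, so it remains only to exclude the interior. Under the contradiction hypothesis there is $\varepsilon>0$ with $\overline{B}_\varepsilon(E_T)\subset\operatorname{int}\mathcal{H}=\{(S,p):0<S<1,\ \Pim(S)<p<\Pdr(S)\}$; choose $\xi_0$ so that $(S(\xi),p(\xi))\in B_\varepsilon(E_T)$ for all $\xi\ge\xi_0$. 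On $\operatorname{int}\mathcal{H}$ one has $\mathcal{F}\equiv 0$, whence $S'\equiv 0$ on $[\xi_0,\infty)$ and therefore $S(\xi)\equiv S_T$ there: inside the play region the saturation is frozen.

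The heart of the argument is then a one–dimensional instability analysis for the pressure. On $[\xi_0,\infty)$ the profile $p$ solves the autonomous scalar equation $p'=\mathcal{G}(S_T,p)=\bigl(\bar F(S_T,p)-\ell(S_T)\bigr)/\bar h(S_T,p)$, where $\bar h(S_T,\cdot)>0$ (since $S_T\in(0,1)$, $h$ does not vanish, by \ref{2PF_P:f}--\ref{2PF_P:h}) and $p\mapsto\bar F(S_T,p)$ is strictly increasing by $\partial_p F>0$ in \ref{2PF_ass:F3}. Because $p(\xi)\to p_T$, the autonomous equation likewise forces $\mathcal{G}(S_T,p_T)=0$, i.e.\ $\bar F(S_T,p_T)=\ell(S_T)$; strict monotonicity then gives $\bar F(S_T,p)-\ell(S_T)<0$ for $p<p_T$ and $>0$ for $p>p_T$ near $p_T$, and dividing by the positive quantity $\bar h(S_T,p)$ we conclude that $p_T$ is a rest point that repels on both sides: $\mathcal{G}(S_T,p)<0$ just below $p_T$ and $>0$ just above. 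Consequently no solution of $p'=\mathcal{G}(S_T,p)$ other than the constant $p\equiv p_T$ can converge to $p_T$ as $\xi\to\infty$, so $p(\xi)\equiv p_T$ on $[\xi_0,\infty)$. Thus the orbit coincides with the constant $E_T$ on $[\xi_0,\infty)$; since the \hyperref[2PF_eq:ODE]{(TW)} vector field is $C^1$ in a neighbourhood of $E_T$ (by \ref{2PF_ass:F3}, \ref{2PF_P:h} and $\bar h>0$ there), uniqueness backward in $\xi$ forces $(S,p)\equiv E_T$ for all $\xi$, contradicting $\lim_{\xi\to-\infty}(S,p)=E_B\neq E_T$. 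Hence $p_T\in\{\Pim(S_T),\Pdr(S_T)\}$.

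The only steps needing genuine care are the rigidity parts: justifying that $E_T$ is an equilibrium (which rests on the global continuity of $\mathcal{F}$ and the continuity of $\mathcal{G}$ on $\operatorname{int}\mathcal{H}$) and, above all, exploiting $\mathcal{F}\equiv 0$ on $\mathcal{H}$ to freeze $S$ near $\xi=\infty$ and then reading off from the sign change of $\bar F(S_T,\cdot)-\ell(S_T)$ at $p_T$ that an interior equilibrium is vertically repelling. Beyond this I expect no serious obstacle; the scalar-ODE instability and the uniqueness argument are routine. Note also that the proof uses nothing specific to Cases (i)--(ii) beyond $S_T\in(0,1)$ and $S_B\neq S_T$, so it applies verbatim to any admissible pair with these properties.
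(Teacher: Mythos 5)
Your proof is correct and takes essentially the same route as the paper: first $\mathcal{F}(E_T)=0$ forces $p_T\in[\Pim(S_T),\Pdr(S_T)]$, and then equilibria with $p_T$ in the interior of $\mathcal{H}$ (the set $\mathcal{H}_0\cap\operatorname{int}(\mathcal{H})$) are ruled out because they are unstable. The paper reads this instability off the phase portrait, while you make the same mechanism rigorous by freezing $S$ via $\mathcal{F}\equiv 0$ on $\mathcal{H}$, showing $\partial_p\bar F>0$ makes $p_T$ a repelling rest point of the resulting scalar equation, and closing with backward uniqueness --- a welcome tightening, but not a different idea.
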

\begin{proof}
Since $E_T$ is an equilibrium point, $\F(S_T,p_T)=0$, which implies that $p_T\in [\Pim(S_T),\Pdr(S_T)]$. 
The directions of the orbits for  $p_T$ in this interval are displayed in \Cref{2PF_fig:hysPlot1} (right). 
We proceed by introducing the set 
$$
\H_0=\{(S,p): S\in (0,1),\; p\in \R \text{ such that } \Gf(S,p)=0\}.
$$
It corresponds to the black dotted curve in \Cref{2PF_fig:hysPlot1} (right). Let $\ell=\ell(S)$, defined in \eqref{2PF_eq:DefG}, be the line passing through $U_B$ and $U_T$. If $\ell$  intersects  $F^{(i)}$ at $S=S_H$, then the vertical half-line $\{(S_H,p):p<\Pim(S_H)\}$ lies in $\H_0$ due to the definition of $F$ in \eqref{2PF_eq:FhysRelation}.  
Concerning $F^{(d)}$, $\ell$ has either zero, one or two intersection points, see \Cref{2PF_fig:fHysTW} (left). In the latter case, as before, $\H_0$ contains one or two vertical half-lines as 
shown in the (right) plot of \Cref{2PF_fig:fHysTW}. However, this aspect plays no major role in the analysis below. 

Every point in the set $\H_0\cap \H$ is an equilibrium point. However, all points in the set $\H_0\cap \mathrm{int}(\H)$ (the interior of $\H$ being referred to as $\mathrm{int}(\H)$ here)
are unstable and as follows from \Cref{2PF_fig:hysPlot1} (right), no orbit can reach these points as $\xi\to \infty$. 
This eliminates all other possibilities to reach $E_T$ as $\xi\to \infty$ except for $p_T=\Pim(S_T)$ and $p_T=\Pdr(S_T)$. 
\end{proof}

We now consider the two cases  separately. 

\subsection{Case (i): $S_B<S_T\leq \bar{S}_i$}
The main result of this section is
\begin{proposition}
Assume \eqref{2PF_eq:PerHysSb} and let $S_T\in (S_B,\bar{S}_i]$, $p_T=\Pim(S_T)$ and  $F^{(i)}(S_T)>F(S_B,p_B)$. 
Then a $\t^*_i(S_T)>0$ exists such that for all $\t<\t^*_i(S_T)$ there is an orbit satisfying \hyperref[2PF_eq:ODE]{(TW)} 
and connecting $E_B$ to $E_T$.  \label{2PF_prop:CaseFhysi}
\end{proposition}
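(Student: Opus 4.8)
The plan is to reduce the connection problem to the scalar analysis of Scenario~A as soon as the orbit has entered $\Him$, and to treat the passage out of $E_B$ as a separate preliminary step. First I would record that the hypothesis $F^{(i)}(S_T)>F(S_B,p_B)$ together with $S_T>S_B$ forces the wave-speed $c=\frac{F^{(i)}(S_T)-F(S_B,p_B)}{S_T-S_B}>0$, and verify that $E_B=(S_B,p_B)$ and $E_T=(S_T,\Pim(S_T))$ are equilibria of \hyperref[2PF_eq:ODE]{(TW)}: both lie in $\{\F=0\}$, and the choice of $c$ makes $\Gf$ vanish at each (for $E_T$ this is exactly $F^{(i)}(S_T)=\ell(S_T)$). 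On $\Him$ the flux and mobility reduce to $F^{(i)}(S)$ and $h^{(i)}(S)$, so there $\Gf$ depends on $S$ only, namely $\Gf^{(i)}(S):=\frac{F^{(i)}(S)-\ell(S)}{h^{(i)}(S)}$ with $\ell(S)=F(S_B,p_B)+c(S-S_B)$ the chord through $U_B$ and $U_T$. The restriction $S_T\le\bar S_i$ of \Cref{2PF_def:FhysQuatities} and the convex--concave shape of $F^{(i)}$ guarantee that $\Gf^{(i)}$ has exactly the sign pattern of $\Gf(\cdot;S_B,S_T)$ in Scenario~A, with $\Gf^{(i)}(S_T)=0$ and ${\Gf^{(i)}}'(S_T)>0$.

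The genuinely new step is the departure from $E_B$, since when $p_B>\Pim(S_B)$ the base point lies in $\mathrm{int}(\H)$, where $\F\equiv0$ and hence $S'\equiv0$. Thus the vertical segment $\{S=S_B\}$ is invariant and the flow on it is the scalar equation $p'=\Gf(S_B,p)$. Because $F=\ell$ at $E_B$ and $\p_p\bar F>0$, $\bar h>0$, one computes $\p_p\Gf(S_B,p_B)>0$, so $E_B$ is a source along this segment: the downward branch has $p'<0$ for $\Pim(S_B)<p<p_B$, leaves $E_B$ as $\xi\to-\infty$, and reaches $E^i_{S_B}=(S_B,\Pim(S_B))$ in finite $\xi$ (there $p'$ stays bounded away from $0$ since $\Gf(S_B,\Pim(S_B))=\frac{F^{(i)}(S_B)-F(S_B,p_B)}{h^{(i)}(S_B)}<0$). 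At $E^i_{S_B}$ the orbit crosses into $\Him$, where $S'>0$ switches on and the orbit bends to the right. When $p_B=\Pim(S_B)$ this step is vacuous and $E_B=E^i_{S_B}$ is exactly the Scenario~A starting point. I would also note that the upward branch leads into $\Hdr$ and belongs to Case~(ii); for Case~(i) one selects the downward branch.

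Once in $\Him$, $S'>0$ lets me write the orbit as $p=w(S)$ solving the initial value problem \eqref{2PF_eq:fWtauGDE}, \eqref{2PF_eq:WinitialValue} verbatim, with $\Gf^{(i)}$ in place of $\Gf(\cdot;S_B,S_T)$ and $\Pim$ as the upper obstacle. The whole apparatus of \Cref{2PF_prop:WequalsPimWhenGfpos,2PF_prop:PropertiesOfSm,2PF_prop:Tau_m} and \Cref{2PF_theo:SleqSstar} then applies unchanged. In particular \Cref{2PF_prop:Tau_m} produces a threshold, which I define to be $\t^*_i(S_T)>0$, with $S_m(\t,S_T)=S_T$ for all $\t\le\t^*_i(S_T)$; this says precisely that $w$ rises back to meet $\Pim$ exactly at $S=S_T$, i.e.\ $w(S_T)=\Pim(S_T)$, so the orbit reaches $E_T=E^i_{S_T}$ monotonically. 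Concatenating the vertical descent $E_B\to E^i_{S_B}$ with this $\Him$-orbit yields the required connection for every $\t<\t^*_i(S_T)$.

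The main obstacle is the degenerate exit from $E_B$: because $S'\equiv0$ throughout $\mathrm{int}(\H)$ the equilibrium $E_B$ is not hyperbolic, so the usual unstable-manifold argument is unavailable and one must instead exploit the invariant vertical line together with its reduced one-dimensional flow, and then justify the corner at $E^i_{S_B}$ where $S'$ jumps from $0$ to a positive value. A secondary point requiring care is checking that the sign structure and the strict inequality ${\Gf^{(i)}}'(S_T)>0$ really do follow from $S_T\le\bar S_i$, so that the Scenario~A hypotheses (and the positivity of the threshold in \Cref{2PF_prop:Tau_m}) transfer even when $S_B$ is not restricted as in \eqref{2PF_eq:SbSt}; the boundary case $S_T=\bar S_i$, where the chord becomes tangent and ${\Gf^{(i)}}'(S_T)=0$, should be handled as a limit.
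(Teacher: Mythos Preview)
Your proposal is correct and follows essentially the same route as the paper: a vertical descent from $E_B$ through $\H$ along $\{S=S_B\}$ until the orbit crosses $\Pim$, after which the system in $\Him$ reduces to the Scenario~A structure with $\Gf_i(S)=(F^{(i)}(S)-\ell(S))/h^{(i)}(S)$, and one defines $\t^*_i(S_T)$ exactly as $\t_m$ in \Cref{2PF_prop:Tau_m}. The paper's own proof is considerably terser---it simply asserts that the orbit leaves $E_B$ vertically, enters $\Him$, and that ``the system has exactly the same structure as \hyperref[2PF_eq:ODE]{(TW)} described in \Cref{2PF_sec:CaseA}''---so your explicit treatment of the non-hyperbolic departure from $E_B$ via the reduced one-dimensional flow on the invariant line, and your verification that the hypotheses of \Cref{2PF_prop:Tau_m} transfer (in particular $\Gf_i(S_T)=0$, ${\Gf_i}'(S_T)>0$ for $S_T<\bar S_i$), fills in detail the paper leaves implicit. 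One minor correction: $S'$ does not actually jump at $E^i_{S_B}$, since $\F$ is continuous and vanishes on $\partial\H$; rather the orbit has a vertical tangent there and $w'(S_B^+)=-\infty$ because $\Gf_i(S_B)<0$ when $p_B>\Pim(S_B)$, which is in fact simpler than the Scenario~A initial-value analysis where $\Gf(S_B)=0$.
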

\begin{proof}
Consider the orbit $(S_{(i)},p_{(i)})$ that leaves $E_B$ vertically through the half-line $\{S=S_B,p<p_B\}$. 
The directions of the orbits in $\H$ imply that $(S_{(i)},p_{(i)})$  intersects $\Pim(S)$ and enters $\Him$ 
(the region under the graph of $\Pim$) at some finite $\xi\in \R$, see \Cref{2PF_fig:fHysTW} (right). In $\Him$ 
its motion is governed by the system
\begin{equation}
\begin{cases}
{S_{(i)}}'=\frac{1}{c_{(i)}\t}(\Pim(S_{(i)})-p_{(i)}),\;\\
 {p_{(i)}}'=\Gf_i(S_{(i)}):=\Gf(S_{(i)},\Pim(S_{(i)})),
 \end{cases} \text{ with } c_{(i)}=\tfrac{F^{(i)}(S_T)-F(S_B,p_B)}{S_T-S_B}>0.
 \label{2PF_eq:CaseiGDE}
\end{equation} 
Note that, $\Gf_i(S)=(F^{(i)}(S)-\ell(S))\slash h^{(i)}(S)$. The system \eqref{2PF_eq:CaseiGDE} has exactly 
the same structure as \hyperref[2PF_eq:ODE]{(TW)} described in \Cref{2PF_sec:CaseA}. Defining $\tau_i^*(S_T)$ 
similar to $\tau_m$ in \Cref{2PF_prop:Tau_m}, the result follows directly.
\end{proof}

\begin{remark} 
Observe that, the construction fails if $F^{(i)}(S_T)<F(S_B,p_B)$ which is intuitive since the overall 
process is not infiltration in this case. If one prescribes a flux $F=F_T$ at $\xi\to \infty$ which is 
less than $F(S_B,p_B)$, then \Cref{2PF_prop:pTrestriction,2PF_prop:CaseFhysi} forces the saturation at 
$\xi\to \infty$ to be $S_T=(F^{(d)})^{-1}(F_T)< S_B$, reducing the problem to Case (ii). However, if one fixes 
the saturation $S_T$ so that  $F(S_B,p_B)>F^{(i)}(S_T)$, then we get a frozen profile with a $p_T\in (\Pim(S_T),\Pdr(S_T))$ 
that satisfies $F(S_T,p_T)=F(S_B,p_B)$. This is explained further in \Cref{2PF_sec:NumresScenarioB}. We set $\t^*_i(S_T)=\infty$ 
in this case.\label{2PF_rem:FSTsmaller}
\end{remark}

\Cref{2PF_prop:CaseFhysi} implies the following: 
\begin{corollary}
Under the assumptions of \Cref{2PF_prop:CaseFhysi}, let $S_{(i)}(\underline{\xi})=S$ 
for some $S\in (S_B,S_T]$ and $\underline{\xi}\in\R$. Define $\underline{w}(S;\t):=p_{(i)}(\underline{\xi})<\Pim(S)$. 
Then $\lim\limits_{\t\to 0} \underline{w}(S;\t)=\Pim(S)$.\label{2PF_cor:WgoesToPim}
\end{corollary}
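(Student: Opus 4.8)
The plan is to deduce this from the orbit lower bound \eqref{2PF_eq:WlowerBoundDefPhi}, now applied to the auxiliary system \eqref{2PF_eq:CaseiGDE}. Recall from the proof of \Cref{2PF_prop:CaseFhysi} that the orbit $(S_{(i)},p_{(i)})$ enters $\Him$ precisely at the point $(S_B,\Pim(S_B))$: if $p_B>\Pim(S_B)$ it first descends along the line $\{S=S_B\}$ through $\H$ (where $S_{(i)}'=0$) until $p_{(i)}$ reaches $\Pim(S_B)$, while if $p_B=\Pim(S_B)$ it leaves $E_B=(S_B,\Pim(S_B))$ directly into $\Him$. Once inside $\Him$ the component $S_{(i)}$ is strictly increasing, so the orbit is the graph of a function $p_{(i)}=\underline{w}(S;\t)$ with $\underline{w}(S_B;\t)=\Pim(S_B)$ and $\underline{w}(S;\t)<\Pim(S)$ for $S>S_B$; for $\t<\t^*_i(S_T)$ this graph is defined on all of $[S_B,S_T]$ by \Cref{2PF_prop:CaseFhysi}. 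Just as \eqref{2PF_eq:fWtauGDE} was obtained in \Cref{2PF_sec:prelim}, $\underline{w}$ satisfies
$$
\underline{w}'(S;\t)=\frac{c_{(i)}\t\,\Gf_i(S)}{\Pim(S)-\underline{w}(S;\t)},\qquad \underline{w}(S_B;\t)=\Pim(S_B).
$$

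First I would set $v(S;\t)=\Pim(S)-\underline{w}(S;\t)>0$ on $(S_B,S_T)$ and repeat verbatim the computation preceding \eqref{2PF_eq:VboundPointWise}: using ${\Pim}'<0$ from \ref{2PF_P:Pc},
$$
(v^2)'=2v\,{\Pim}'-2c_{(i)}\t\,\Gf_i(S)\le -2c_{(i)}\t\,\Gf_i(S)\quad\text{in }\{v>0\}.
$$
Integrating from $S_B$ to $S$ and using $v(S_B;\t)=0$ gives, in complete analogy with \eqref{2PF_eq:WlowerBoundDefPhi},
$$
0\le v^2(S;\t)\le 2c_{(i)}\t\,\Phi_i(S),\qquad \Phi_i(S):=-\int_{S_B}^{S}\Gf_i(\vr)\,d\vr .
$$

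To finish, observe that $S_B>0$ and $S_T\le\bar{S}_i<1$ place the interval $[S_B,S_T]$ strictly inside $(0,1)$, where $h^{(i)}=k^{(i)}_{rn}f^{(i)}$ is bounded away from zero; hence $\Gf_i=(F^{(i)}-\ell)/h^{(i)}$ is continuous on $[S_B,S_T]$ and $M:=\max_{[S_B,S_T]}\Phi_i<\infty$. Therefore $0\le\Pim(S)-\underline{w}(S;\t)=v(S;\t)\le\sqrt{2c_{(i)}\t M}$ for every $S\in(S_B,S_T]$, and letting $\t\to0$ yields $\underline{w}(S;\t)\to\Pim(S)$ (in fact uniformly on $[S_B,S_T]$). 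I do not foresee any genuine obstacle here: the only delicate points are the structural facts inherited from \Cref{2PF_prop:CaseFhysi} — that the orbit enters $\Him$ at $(S_B,\Pim(S_B))$, that for $\t<\t^*_i(S_T)$ it remains in $\Him$ up to $S=S_T$, and that it stays strictly below $\Pim$ there so that $v>0$ and the ODE for $\underline{w}$ is well posed — all of which are already established in the analysis of Scenario A and its transcription to \eqref{2PF_eq:CaseiGDE}.
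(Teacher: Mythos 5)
Your proposal is correct and follows essentially the same route as the paper: the authors also prove \Cref{2PF_cor:WgoesToPim} by observing that $\underline{w}$ satisfies the analogue of \eqref{2PF_eq:fWtauGDE} for the system \eqref{2PF_eq:CaseiGDE} and hence the lower bound \eqref{2PF_eq:WlowerBoundDefPhi}, which forces $\Pim(S)-\underline{w}(S;\t)\to 0$ as $\t\to 0$. Your write-up merely makes explicit the details (entry of the orbit into $\Him$ at $(S_B,\Pim(S_B))$ and boundedness of $\Phi_i$ on $[S_B,S_T]$) that the paper leaves implicit.
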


\begin{figure}[H]
\begin{center}
\includegraphics[width=0.95\textwidth]{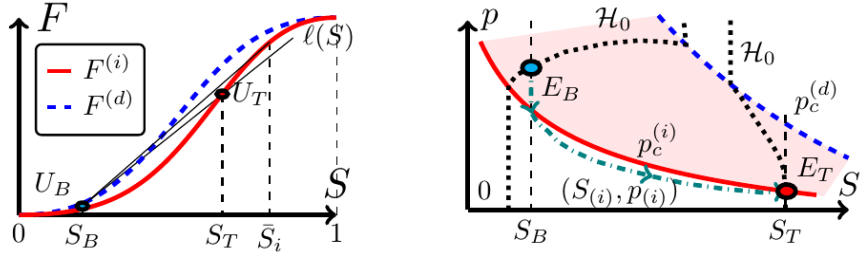} 
\end{center}
\caption{$U_B$, $U_T$, $\ell(S)$ used in \Cref{2PF_prop:CaseFhysi} in the $S$-$F$ plane. (right) 
The $S$-$p$ plane and the orbit $(S_{(i)},p_{(i)})$ for Case (i) with $F^{(i)}(S_T)>F(S_B,p_B)$ and $\t<\t^*_i(S_T)$.}
\label{2PF_fig:fHysTW}
\end{figure}

Here, $\underline{w}$ is the counterpart of $w$ defined in \Cref{2PF_sec:CaseA} for Scenario A. 
The proof of \Cref{2PF_cor:WgoesToPim} is based on the inequality  \eqref{2PF_eq:WlowerBoundDefPhi} which 
is satisfied in this case by $\underline{w}$. From \Cref{2PF_cor:WgoesToPim} one obtains that for Case (i), 
if $\t\searrow 0$, meaning that if the dynamic capillarity is vanishing, then the orbit follows either the scanning curve, 
here the line segment $\{S=S_B, \Pim(S_B)<p<p_B\}$, or the infiltration curve $\Pim$. The result is analogous to the results 
for capillary hysteresis given in \cite[Section 3]{VANDUIJN2018232}.

\subsection{Case (ii): $\bar{S}_d\leq S_T<S_B$ and stability of plateaus}
The counterpart of \Cref{2PF_prop:CaseFhysi} for Case (ii) is (see also \Cref{2PF_fig:fHysTWdr}),
\begin{proposition}
Assume \eqref{2PF_eq:PerHysSb} and let $S_T\in [\bar{S}_d, S_B)$, $p_T=\Pdr(S_T)$ and $F^{(d)}(S_T)<F(S_B,p_B)$. 
Then a $\t^*_d(S_T)>0$ exists such that for all $\t<\t^*_d(S_T)$ there is an orbit $(S_{(d)},p_{(d)})$ satisfying 
\hyperref[2PF_eq:ODE]{(TW)} and connecting $E_B$ to $E_T$. Moreover, for a fixed $S_{(d)}=S\in [S_T,S_B)$, one 
has $p_{(d)}\to \Pdr(S)$ as $\t\to 0$.\label{2PF_prop:CaseFhysii}
\end{proposition}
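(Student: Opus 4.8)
The plan is to mirror the proof of \Cref{2PF_prop:CaseFhysi} with the roles of infiltration and drainage interchanged. First I would consider the orbit $(S_{(d)},p_{(d)})$ that leaves the equilibrium $E_B$ vertically, but now \emph{upward}, through the half-line $\{S=S_B,\,p>p_B\}$. Since $\p_p\bar{F}>0$ in $\H$ by \ref{2PF_ass:F3} and $\ell(S_B)=F(S_B,p_B)$, for every $p\in(p_B,\Pdr(S_B)]$ one has $F(S_B,p)>F(S_B,p_B)$ and $h(S_B,p)>0$, hence $\Gf(S_B,p)>0$, i.e. $p'>0$ while $S'=0$ in $\H$. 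Therefore this orbit climbs to $(S_B,\Pdr(S_B))$ and crosses into $\Hdr$, consistent with the orbit directions established in \Cref{2PF_prop:pTrestriction}.

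Once in $\Hdr$, the system \hyperref[2PF_eq:ODE]{(TW)} reduces to the drainage analogue of \eqref{2PF_eq:CaseiGDE},
\begin{equation*}
\begin{cases}
{S_{(d)}}'=\dfrac{1}{c_{(d)}\t}\big(\Pdr(S_{(d)})-p_{(d)}\big),\\[.3em]
{p_{(d)}}'=\Gf_d(S_{(d)}):=\Gf\big(S_{(d)},\Pdr(S_{(d)})\big),
\end{cases}
\qquad c_{(d)}=\frac{F^{(d)}(S_T)-F(S_B,p_B)}{S_T-S_B},
\end{equation*}
where $\Gf_d(S)=(F^{(d)}(S)-\ell(S))/h^{(d)}(S)$ with $\ell$ the chord through $U_B$ and $U_T$. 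The hypothesis $F^{(d)}(S_T)<F(S_B,p_B)$ together with $S_T<S_B$ gives $c_{(d)}>0$, and since $p_{(d)}>\Pdr(S_{(d)})$ throughout $\Hdr$ we get ${S_{(d)}}'<0$; thus $S$ decreases monotonically from $S_B$ towards $S_T$. This monotonicity lets me represent the orbit as $p_{(d)}=w_d(S)$ on a left-neighbourhood of $S_B$, obeying the scalar equation $w_d'(S)=c_{(d)}\t\,\Gf_d(S)/(\Pdr(S)-w_d(S))$, which is the exact drainage counterpart of \eqref{2PF_eq:fWtauGDE}.

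The key observation is that this reduced problem has precisely the same structure as the one analysed in Scenario A: the constraint $S_T\in[\bar{S}_d,S_B)$, with $\bar{S}_d$ the tangent point from $U_B$ of \Cref{2PF_def:FhysQuatities}, together with the convex–concave shape of $F^{(d)}$ from \ref{2PF_ass:F3}, should guarantee that $\Gf_d$ obeys the sign pattern analogous to \eqref{2PF_eq:PropGfsign} on $[S_T,S_B]$, so that $w_d$ dips into $\Hdr$ and returns to $\Pdr$ at a well-defined saturation. Defining $\t^*_d(S_T)$ in analogy with $\t_m$ in \Cref{2PF_prop:Tau_m}, as the supremum of those $\t$ for which $w_d$ meets $\Pdr$ exactly at $S_T$, the comparison and positivity arguments of \Cref{2PF_prop:Wtau,2PF_prop:Tau_m} then transfer verbatim and show that for every $\t<\t^*_d(S_T)$ the orbit reaches $E_T=(S_T,\Pdr(S_T))$, which is the desired connection.

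Finally, for the \emph{moreover} statement I would set $v_d=p_{(d)}-\Pdr(S)>0$ and derive, exactly as in \eqref{2PF_eq:WlowerBoundDefPhi}, the identity $\tfrac12(v_d^2)'=-v_d\Pdr'-c_{(d)}\t\,\Gf_d$; integrating from $S$ up to $S_B$ (where $v_d=0$) and using $\Pdr'<0$ yields $0<v_d^2(S)\leq 2c_{(d)}\t\,\Phi_d(S)$ with $\Phi_d(S)=\int_S^{S_B}\Gf_d$, whence $p_{(d)}\to\Pdr(S)$ as $\t\to 0$, just as in \Cref{2PF_cor:WgoesToPim}. I expect the only genuine obstacle to be the verification of the sign pattern of $\Gf_d$ (equivalently $\Phi_d\geq 0$) on $[S_T,S_B]$: this is where the tangent-point characterisation of $\bar{S}_d$ and the interplay between the chord $\ell$ and the convex–concave profile $F^{(d)}$ must be used carefully, whereas the remaining steps transcribe the Scenario A machinery almost line by line.
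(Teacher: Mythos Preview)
Your proposal is correct and takes essentially the same approach as the paper, which does not give an explicit proof but simply presents the statement as the drainage counterpart of \Cref{2PF_prop:CaseFhysi} together with \Cref{2PF_fig:fHysTWdr}. Your mirroring of that argument, with the $v_d$-bound paralleling \eqref{2PF_eq:WlowerBoundDefPhi} and \Cref{2PF_cor:WgoesToPim} for the ``moreover'' clause, is exactly what is intended.
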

\begin{figure}[H]
\begin{center}
\includegraphics[width=0.75\textwidth]{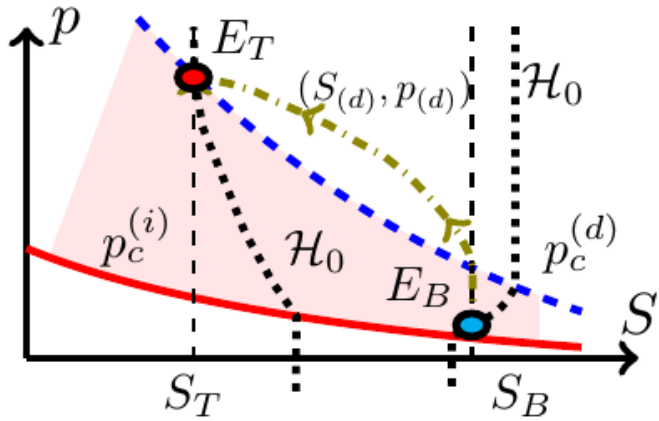}
\end{center}
\caption{The orbit $(S_{(d)},p_{(d)})$ for Case (ii) with $F^{(d)}(S_T)<F(S_B,p_B)$ and $\t<\t^*_d(S_T)$. }
\label{2PF_fig:fHysTWdr}
\end{figure}
Finally, we investigate a special case related to the development of stable saturation plateaus in 
infiltration experiments. For $S_B\in (0,1)$, and $S_T\in(S_B,1)$ a stable plateau is formed when an infiltration wave, 
from $S_B$ to $S_P\in (S_T,1)$, followed by a drainage wave, from $S_P$ to $S_T$, both have the same speed resulting
in the width of the plateau to remain constant. This is different from the plateaus described in 
\eqref{2PF_eq:CaseAentropyStauinB} where the speeds of the infiltration and the drainage fronts are 
necessarily different. The existence of stable saturation plateaus has been widely  
studied experimentally \cite{dicarlo2004experimental,shiozawa2004unexpected,gladfelter1980effect}
and numerically \cite{schneider2018stable,hilfer2014saturation}. Although  results regarding stability 
of the plateau are available \cite{schneider2018stable,hilfer2014saturation}, the mechanism behind its development 
is still not well understood. Here, we give an example where our analysis predicts that such a plateau will develop. 
Specifically, it occurs when $\t>\t^*_i(S_T)$ and a direct monotone orbit from $E_B$ to $(S_T,\Pim(S_T))$ is no longer 
possible. This is verified numerically in \Cref{2PF_sec:NumresScenarioB}.

\begin{proposition}
Assume \eqref{2PF_eq:PerHysSb} and let $S_T\in (S_B,1)$ be such that the line $\ell$ through $U_B=(S_B,F(S_B,p_B))$ and 
$U_T=(S_T,F^{(d)}(S_T))$ in the $F$-$S$ plane intersects $F^{(i)}$ at some $S=S_P\in (S_T,\bar{S}_i)$. 
Consider the system \hyperref[2PF_eq:ODE]{(TW)} with the wave-speed
 $$
c_P=\tfrac{F^{(d)}(S_T)-F(S_B,p_B)}{S_T-S_B}= \tfrac{F^{(i)}(S_P)-F(S_B,p_B)}{S_P-S_B}=\tfrac{F^{(i)}(S_P)-F^{(d)}(S_T)}{S_P-S_T}.
$$
For this system, let  $(S^P_{(i)},p^P_{(i)})$  be the orbit that passes through $\Him$ and connects to 
the equilibrium point $(S_B,p_B)$ as $\xi\to -\infty$, described in \Cref{2PF_prop:CaseFhysi}.  Similarly, 
let $(S^P_{(d)},p^P_{(d)})$  be the orbit passing through $\Hdr$ and connecting to $(S_P,\Pim(S_P))$ as $\xi\to -\infty$, 
described in \Cref{2PF_prop:CaseFhysii}.
Assume that $0<\t<\max\{\t^*_i(S_P),\t^*_d(S_T))\}$ where the $\t^*_i(S_P)$ and the $\t^*_d(S_T)$ values correspond 
to the orbits $(S^P_{(i)},p^P_{(i)})$ and  $(S^P_{(d)},p^P_{(d)})$ respectively. 
Then, $(S^P_{(i)},p^P_{(i)})\to (S_P,\Pim(S_P))$ as $\xi\to \infty$ and $(S^P_{(d)},p^P_{(d)})\to (S_T,\Pdr(S_T))$  
as $\xi\to \infty$.\label{2PF_prop:StablePlateau}
\end{proposition}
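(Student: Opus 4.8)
The plan is to realise the stable plateau as a \emph{composite viscous profile}: an infiltration travelling wave that carries the saturation from $S_B$ up to the plateau value $S_P$, glued to a drainage travelling wave that carries it from $S_P$ down to $S_T$, both propagating at the \emph{same} speed $c_P$. The triple equality defining $c_P$ is precisely the Rankine--Hugoniot compatibility forcing the two fronts to share this speed; since the speed is common, the plateau at level $S_P$ neither widens nor shrinks, which is the meaning of stability. Thus I would reduce the proposition to constructing the two legs separately and observing that they meet at the junction point $(S_P,\Pim(S_P))$. The two legs are governed by exactly the systems of \Cref{2PF_prop:CaseFhysi} (infiltration, through $\Him$) and \Cref{2PF_prop:CaseFhysii} (drainage, through $\Hdr$), so the whole argument is a matter of verifying that the hypotheses of those two propositions hold with the single speed $c_P$.

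First I would fix the sign of the wave-speed. Because $p_B\le \Pdr(S_B)$ and $\p_p F>0$ in $\H$ (assumption \ref{2PF_ass:F3}), one has $F(S_B,p_B)\le F^{(d)}(S_B)$, and since $F^{(d)}$ is strictly increasing with $S_T>S_B$ this gives $F(S_B,p_B)<F^{(d)}(S_T)$. Hence
$$
c_P=\frac{F^{(d)}(S_T)-F(S_B,p_B)}{S_T-S_B}>0.
$$
Feeding this back into the collinearity of $U_B$, $U_P:=(S_P,F^{(i)}(S_P))$ and $U_T$ yields at once the two strict flux orderings
$$
F^{(i)}(S_P)=F(S_B,p_B)+c_P(S_P-S_B)>F(S_B,p_B),\qquad F^{(i)}(S_P)=F^{(d)}(S_T)+c_P(S_P-S_T)>F^{(d)}(S_T),
$$
which are exactly the inequalities required by the two existence results.

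Next I would handle the two legs in turn. For the \emph{infiltration leg} I apply \Cref{2PF_prop:CaseFhysi} with target saturation $S_P$: the hypotheses $S_P\in(S_B,\bar{S}_i]$ (given $S_P<\bar{S}_i$), $p_T=\Pim(S_P)$ and $F^{(i)}(S_P)>F(S_B,p_B)$ all hold, and the wave-speed $c_{(i)}$ appearing there equals $c_P$ by collinearity; this produces a threshold $\t^*_i(S_P)>0$ so that for $\t<\t^*_i(S_P)$ the orbit $(S^P_{(i)},p^P_{(i)})$ leaves $E_B$ through $\Him$ and reaches $(S_P,\Pim(S_P))$ as $\xi\to\infty$. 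For the \emph{drainage leg} I apply \Cref{2PF_prop:CaseFhysii} with \emph{base point} $(S_P,\Pim(S_P))$ --- admissible since $\Pim(S_P)\in[\Pim(S_P),\Pdr(S_P)]$ so \eqref{2PF_eq:PerHysSb} holds --- and target $(S_T,\Pdr(S_T))$: here $S_T<S_P$, $p_T=\Pdr(S_T)$, the inequality $F^{(d)}(S_T)<F^{(i)}(S_P)=F(S_P,\Pim(S_P))$ follows from the previous step, and again the matching speed is $c_P$; this yields a threshold $\t^*_d(S_T)>0$ so that for $\t<\t^*_d(S_T)$ the orbit $(S^P_{(d)},p^P_{(d)})$ leaves $(S_P,\Pim(S_P))$ through $\Hdr$ and reaches $(S_T,\Pdr(S_T))$ as $\xi\to\infty$. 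Taking $\t$ below both thresholds, the two waves meet at $(S_P,\Pim(S_P))$ and share the speed $c_P$, completing the construction.

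The step I expect to be the real obstacle is checking that $(S_P,\Pim(S_P))$ is a \emph{legitimate} base point for the drainage leg, i.e. that the geometric compatibility $S_T\ge \bar{S}_d(U_P)$ implicit in \Cref{2PF_prop:CaseFhysii} holds, where $\bar{S}_d(U_P)$ is the drainage tangent-saturation from $U_P$ in the sense of \Cref{2PF_def:FhysQuatities}. Since $U_P$ lies strictly below the graph of $F^{(d)}$ (as $F^{(i)}<F^{(d)}$) while $U_T$ lies on that graph and on the same line $\ell$, this amounts to showing, via the convex--concave geometry of $F^{(d)}$, that the secant $\overline{U_P U_T}$ does not overshoot the tangent point as it descends from $S_P$ to $S_T$; I would establish it by analysing the sign of $\ell(S)-F^{(d)}(S)$ on $(S_T,S_P)$, using that this quantity vanishes at $S_T$ and is negative at $S_P$. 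A secondary point worth stating explicitly is that each of the two conclusions requires its \emph{own} threshold, so both hold precisely when $\t$ lies below both $\t^*_i(S_P)$ and $\t^*_d(S_T)$.
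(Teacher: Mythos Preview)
Your approach is essentially identical to the paper's: the paper's proof consists of the single line ``follows directly from \Cref{2PF_prop:CaseFhysi,2PF_prop:CaseFhysii}'', and you have spelled out the verification of their hypotheses (positivity of $c_P$, the flux orderings, and the choice of base and target states for each leg) in detail. Your closing observations---that the drainage tangent condition $S_T\ge \bar{S}_d(U_P)$ needs checking, and that both conclusions require $\tau$ to lie below \emph{both} thresholds rather than their maximum as literally stated---go beyond what the paper makes explicit.
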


The proof follows  directly from \Cref{2PF_prop:CaseFhysi,2PF_prop:CaseFhysii}.

\subsection{Entropy solutions}\label{2PF_sec:EntropySolutionScenarioC}
We can now discuss the entropy solutions of the Riemann problem \eqref{2PF_eq:LimitingBL} under the
assumptions of Scenario B. To be more specific, we give a selection criteria for the solutions of the system
\begin{equation}
\begin{cases}
\dfrac{\p S}{\p t}+ \dfrac{\p F(S,p)}{\p z}=0, \\[.5em]
 p \in \frac{1}{2}(\Pdr(S)+\Pim(S))-\frac{1}{2}(\Pdr(S)-\Pim(S))\cdot\sgn\left(\frac{\p S}{\p t}\right),
 \end{cases}
 \text{  in } \R\times[0,\infty) \label{2PF_eq:DefHysRiemann}
\end{equation}
\begin{equation}
\text{ with }
S(z,0)=\begin{cases}
S_T &\text{ for } z<0,\\
S_B &\text{ for } z>0,
\end{cases} \text{ and }
p(z,0)=p_B \text{ for } z>0.
\end{equation}
 We view \eqref{2PF_eq:DefHysRiemann} as the limit of 
\hyperref[2PF_eq:FullRegularisedSystem]{($\mathcal{P}$)} when the capillary effects vanish. However, hysteresis is still present in the model.

Note that, $\t$ still plays a role in determining the entropy solution despite being absent in \eqref{2PF_eq:DefHysRiemann}. 
This is similar to what we saw in \Cref{2PF_sec:CaseA}.
However, the focus here being hysteresis in permeability and capillary pressure, for a fixed $S_B\in (0,1)$ we take
\begin{equation}
0<\t<\min\left\{\inf\limits_{S_T\in (S_B,\bar{S}_i]}\t^*_i(S_T),\inf\limits_{S_T\in [\bar{S}_d,S_B)}\t^*_d(S_T)\right\}.
\label{2PF_eq:SceneCentropyTaubound}
\end{equation}
Observe that, \eqref{2PF_eq:SceneCentropyTaubound} does not provide a void interval for $\t$. 
To see this, note that $\t^*_i(S_T)$ is defined similar to $\t_m$ in \Cref{2PF_prop:Tau_m} and thus, 
it satisfies the inequality in \eqref{2PF_eq:DefTauM}, i.e. it has the positive quantity $\bar{\t}_m$ 
as its lower bound. Although $\bar{\t}_m$  in \Cref{2PF_prop:Tau_m} actually depends on $S_T$, one sees 
from \eqref{2PF_eq:DefTauBarM} that the values of $\bar{\t}_m$ are bounded away from 0 uniformly with respect to $S_T$. 
Hence, $\t^*_i(S_T)$ is also bounded uniformly away from 0. Similar argument holds for $\t^*_d(S_T)$.

We now consider the cases $S_T>S_B$ and $S_T<S_B$ separately. 
\subsection*{\underline{$S_T>S_B$}}
If $S_T\leq \bar{S}_i$ ($\bar{S}_i$ introduced in Definition \ref{2PF_def:FhysQuatities}) and $F^{(i)}(S_T)>F(S_B,p_B)$  
then the entropy solution is a shock:
\begin{equation}
S(z,t)=
\begin{cases}
S_T &\text{ for } z<c_{(i)} t,\\
S_B &\text{ for } z>c_{(i)} t,
\end{cases} 
\text{ with } 
c_{(i)}=\frac{F^{(i)}(S_T)-F(S_B,p_B)}{S_T-S_B}.\label{2PF_eq:EntropyFhysCaseiA}
\end{equation}
For $F^{(i)}(S_T)<F(S_B,p_B)$, from Remark \ref{2PF_rem:FSTsmaller}, the solution is \eqref{2PF_eq:EntropyFhysCaseiA} but with $c_{(i)}=0$, i.e. it is a stationary shock.
However, if $S_T>\bar{S}_i$ then the solution becomes more complex, combining a rarefaction wave with a shock:
\begin{equation}
S(z,t)=\begin{cases}
S_T &\text{ for } z<{F^{(i)}}'(S_T)t,\\
r_{(i)}(z/t) &\text{ for } {F^{(i)}}'(S_T) t<z<{F^{(i)}}'(\bar{S}_i) t,\\
\bar{S}_i &\text{ for } {F^{(i)}}'(\bar{S}_i) t<z<c_{(i)}  t,\\
S_B &\text{ for } z>c_{(i)}  t.
\end{cases}\label{2PF_eq:EntropyFhysCaseiB}
\end{equation}
Here $r_{(i)}(\cdot)$ satisfies
$$
{F^{(i)}}'(r_{(i)}(\zeta))=\zeta, \text{ for } {F^{(i)}}'(S_T)\leq \zeta \leq {F^{(i)}}'(\bar{S}_i).
$$

\subsection*{\underline{$S_T<S_B$}}
If $S_T\geq \bar{S}_d$  then the entropy solution for $F(S_B,p_B)>F^{(d)}(S_T)$ is the shock
\begin{equation}
S(z,t)=
\begin{cases}
S_T &\text{ for } z<c_{(d)} t,\\
S_B &\text{ for } z>c_{(d)} t,
\end{cases} 
\text{ with } 
c_{(d)}=\frac{F(S_B,p_B)- F^{(d)}(S_T)}{S_B - S_T},\label{2PF_eq:EntropyFhysCaseiiA}
\end{equation}
and for $F(S_B,p_B)<F^{(d)}(S_T)$ it is \eqref{2PF_eq:EntropyFhysCaseiiA} with $c_{(d)}=0$. If $S_T<\bar{S}_d$ then 
the solution has a similar structure to \eqref{2PF_eq:EntropyFhysCaseiB}, i.e.
\begin{equation}
S(z,t)=\begin{cases}
S_T &\text{ for } z<{F^{(d)}}'(S_T)t,\\
r_{(d)}(z/t) &\text{ for } {F^{(d)}}'(S_T) t<z<{F^{(d)}}'(\bar{S}_d) t,\\
\bar{S}_d &\text{ for } {F^{(d)}}'(\bar{S}_d) t<z<c_{(d)}  t,\\
S_B &\text{ for } z>c_{(d)}  t,
\end{cases} \label{2PF_eq:EntropyFhysCaseiiB}
\end{equation}
with the function $r_{(d)}(\cdot)$ satisfying
$$
{F^{(d)}}'(r_{(d)}(\zeta))=\zeta, \text{ for } {F^{(d)}}'(S_T)\leq \zeta \leq {F^{(d)}}'(\bar{S}_d).
$$

\section{Numerical results}
\label{2PF_sec:NumRes}

For the numerical experiments, we solve $( \tilde{\mathcal{P}}  )$ (System \eqref{2PF_eq:SystemPtilde})  in a domain $\left(z_{in},z_{out} \right)$, where $z_{in}<0$ and
$z_{out}>0$. As an initial condition for the saturation variable, we choose a smooth and monotone approximation of the Riemann data:
\begin{equation}
\label{eq:initial_condition}
S \left( z,0 \right) =
\begin{cases}
S_T \text{ for } z<-l, \\[.2em]
\frac{\left(S_B+S_T\right)}{2} + \frac{ \left(S_T-S_B\right)}{4 l^3} z \cdot (z^2 - 3 l^2) \text{ for } -l \leq z \leq l, \\[.2em]
S_B \text{ for } z>l.
\end{cases}
\end{equation}
Here, $l$ is a smoothing parameter, $S_T$ denotes the saturation induced by a certain injection rate and $S_B$ is the initial 
saturation within the porous medium.
In order to model the capillary pressure, a van Genuchten parametrisation is considered, i.e.
$$
p_c^{(j)}(S)=\Lambda_j (S^{-\frac{1}{m_j}}-1)^{1-m_j},\; j \in \left\{ i,d \right\}.
$$
In the remainder of this section we use the following parameter set:  $\Lambda_i=3.5$, $m_i=0.92$, $\Lambda_d=7$ and $m_d=0.9$.  
To solve  $( \tilde{\mathcal{P}}  )$ numerically, for $n\in \N\cup\{0\}$ and $t_0=0$, we solve within the time step 
$$
\left[t_n,t_{n+1} \right] \text{ of width } \Delta t_n = t_{n+1}- t_n,
$$ 
 the elliptic problem
$$
- \frac{\partial}{\partial z} \left( F\left(S,p \right) + h\left(S,p \right) \frac{\partial p}{\partial z} \right) 
= \frac{1}{\tau} \mathcal{F}\left(S,p \right),
$$
with respect to the pressure variable $p$. For a given $S$, this is a nonlinear elliptic problem and to solve 
it, a linear iterative scheme is employed 
which is referred to as the L-scheme in literature \cite{pop2004mixed,List2016,MITRA2018}:
$$
L\left( p_n^i - p_n^{i-1} \right) - \frac{\partial}{\partial z} \left( F\left(S_n,p_n^{i-1} \right) + h\left(S_n,p_n^{i-1} \right) \frac{\partial p_n^i}{\partial z} \right)
= \frac{1}{\tau} \mathcal{F}\left(S_n,p_n^{i-1} \right).
$$
Here, $p_n^i$ denotes the pressure at the $i^{\mathrm{th}}$ iteration and $p_n^0 = p\left(z,t_n\right)$. On closer examination, the L-scheme corresponds to a linearization of the
nonlinear problem, since for each iteration a linear equation in the unknown pressure variable $p_n^i$ is solved. For Scenario A the parameter $L$ is set to $L = \frac{1}{\tau}$ to ensure convergence of the L-scheme  \cite{pop2004mixed,mitra2018wetting} and for Scenario B the modified variant of the L-scheme is used \cite{MITRA2018,VANDUIJN2018232} to speed up the convergence, since in this scenario the stiffness matrix has to be recomputed in every iteration. A standard cell centered finite volume scheme is considered for discretizing the 
linearised elliptic problem in space. Having the pressure variable $p_n$ and the saturation variable $S_n$ for $t=t_n$ at hand, 
we update the saturation as follows:
$$
S_{n+1} = S_n + \frac{\Delta t_n}{\tau}  \mathcal{F}\left(S_n,p_n \right).
$$

\subsection{Numerical results for Scenario A} 

First we illustrate the  theoretical findings of Scenario A. The boundary conditions with respect to the pressure 
variable are of Neumann type at $z=z_{in}$ and of Dirichlet type at $z=z_{out}$:
\begin{equation}
\label{eq:boundary_conditions_Scenario_A}
 p^\prime \left( z_{in}, t \right)  =0 \text{ and }  p\left( z_{out}, t \right) = \Pim(S_B) \text{ for all } t>0.
\end{equation}
The boundaries of the domain are given by: $z_{in}= -10$ and $z_{out}= 500$. Since we do not include 
hysteresis in the relative permeabilites, the flux function $F$ depends only on $S$ and is determined by:
$$
f \left(S \right) = \frac{S^2}{S^2 + \left(1-S\right)^2} \text{ and } N_g =1.
$$
The numerical results presented 
in this subsection are related to $t = t_{\text{end}} = 300$. 
For the parameters of the initial condition, we take:
$$
S_B = 0.1,\quad S_T = 0.4 \;\text{ and } l=1.
$$ 
Based on these data, some of the variables and constants occurring in 
\Cref{2PF_sec:prelim} and \Cref{2PF_fig:Sdefs} are computed, i.e:
\begin{equation}
\tilde{S} \approx 0.3138,\; \bar{S} \approx 0.5909,\;S_o \approx 0.4393,\;S_\ast \approx 0.4111 
\text{ and } S^\ast \approx 0.8132. \label{2PF_eq:NumResSdefs}
\end{equation}
Moreover, the curves for $\gamma$ and $\beta$ are determined (see  \Cref{fig:Char_Saturations}). 
Observe that, from our choice, $S_B < S_o$ and $S_T \in \left(S_B,S_*\right]$. 
\begin{figure}[h!]
\centering
\includegraphics[scale=0.4,clip={10 10 0 0}, clip]{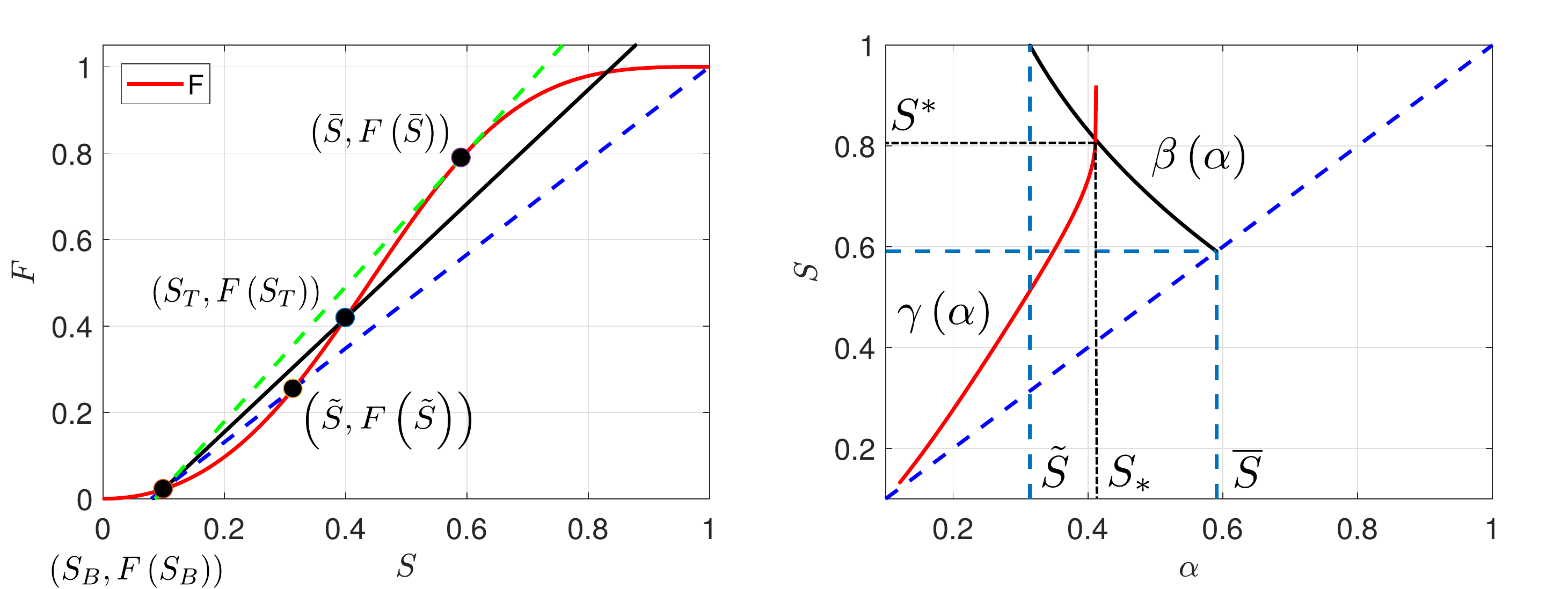}
\caption{\label{fig:Char_Saturations} Fractional flow function $F$ for Scenario A (left). 
The characteristic points $\tilde{S}$, $\bar{S}$ and $S_T$ are shown.  (right)
Curves for $\gamma$ (red) and $\beta$ (black) corresponding to $F$. The intersection point of these curves 
is denoted by $\left(S_\ast,S^\ast\right)$.}
\end{figure}
Next, the characteristic $\tau$-values for drainage and imbibition are computed. Using  \eqref{2PF_eq:ExpressionEigenValues} 
and given parameters, we obtain:
$$
\tau_i = 0.0452 \text{ and } \tau_d = 0.2620.
$$
Since the requirements listed in \Cref{2PF_theo:SleqSstar} are all fulfilled, we can compare the numerical results with the claims contained
in the theorem. For this purpose, we choose $\tau$ from the following set:
$$
\tau \in  \left\{ 0.045,\;0.25,\;1.0,\;2.0 \right\},
$$
and study the resulting $S$-$p$ orbits. Considering \Cref{2PF_fig:orbits}, it can be observed that
for $\tau < \tau_i$ monotone saturation waves are produced by the numerical model linking $E_{S_B}^i$ and $E_{S_T}^i$. 
In the other cases, a saturation overshoot can be detected, where for $\tau_i < \tau < \tau_d$ the orbit 
ends up at the equilibrium point $E_{S_T}^d$ and for $\t>\t_d$ the orbits spiral around the segment $\overline{E_{S_T}^i E_{S_T}^d}$. 
If we choose larger values of $\t$, the corresponding $S_m(\t,S_T)$ value of the orbit increases. This supports the claims of 
\Cref{2PF_cor:ContinuousDependence} and \Cref{2PF_prop:Wtau}. Similar results including variation of saturation with $\xi$ can be found 
in \cite{mitra2018wetting}. 
\begin{figure}[h!]
\centering
\includegraphics[scale=0.395]{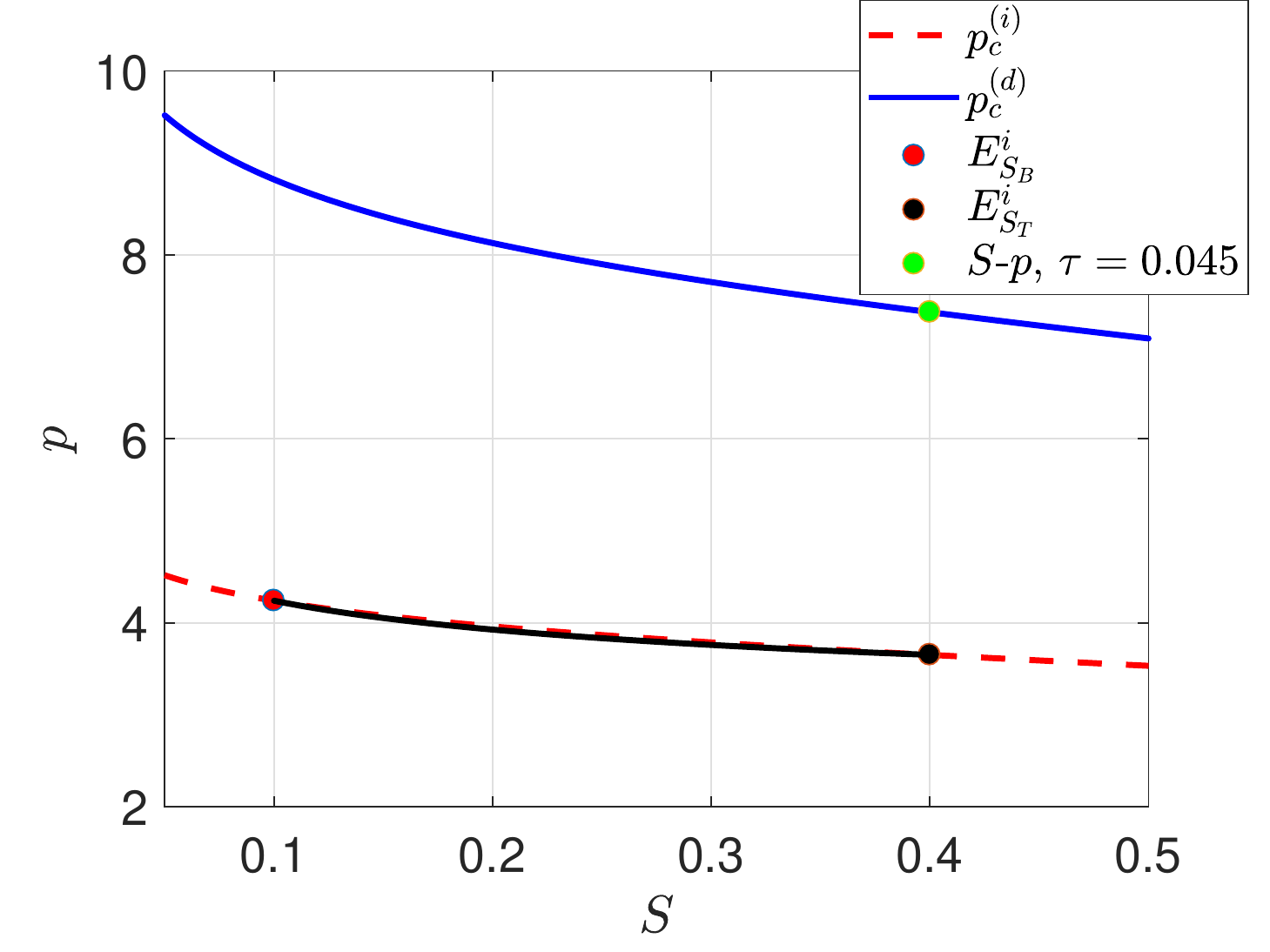}
\includegraphics[scale=0.395]{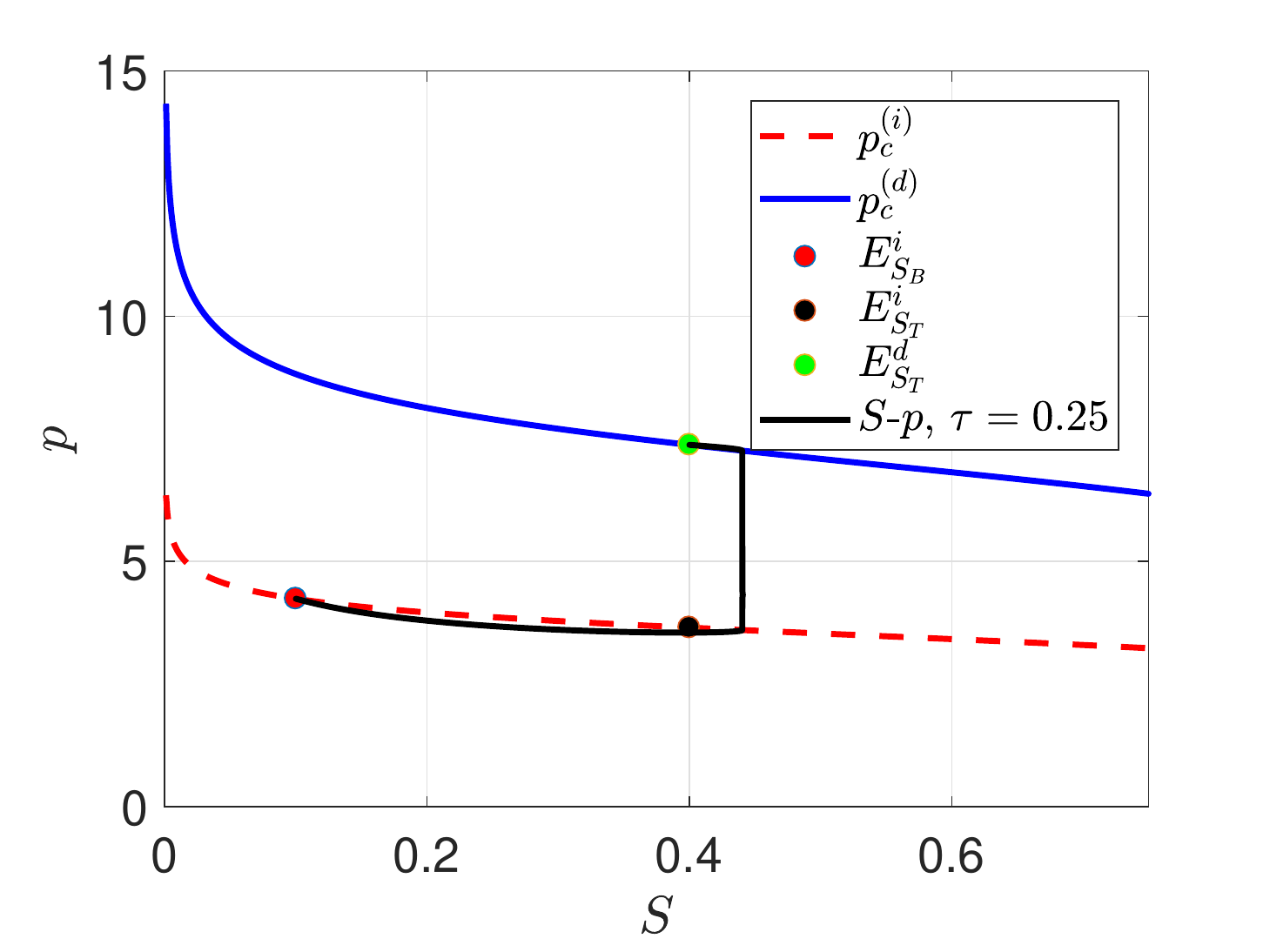}
\\
\includegraphics[scale=0.395]{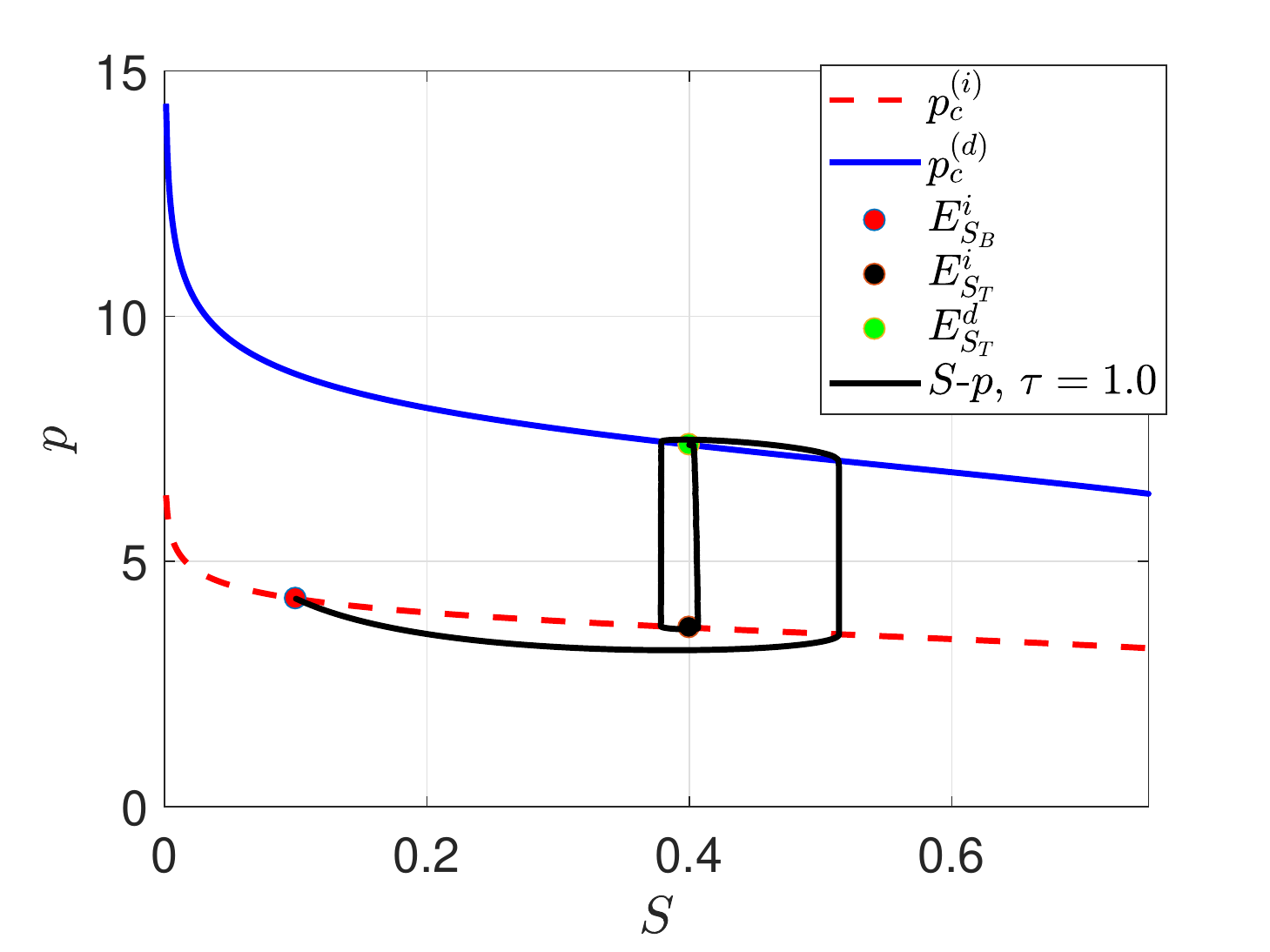}
\includegraphics[scale=0.395]{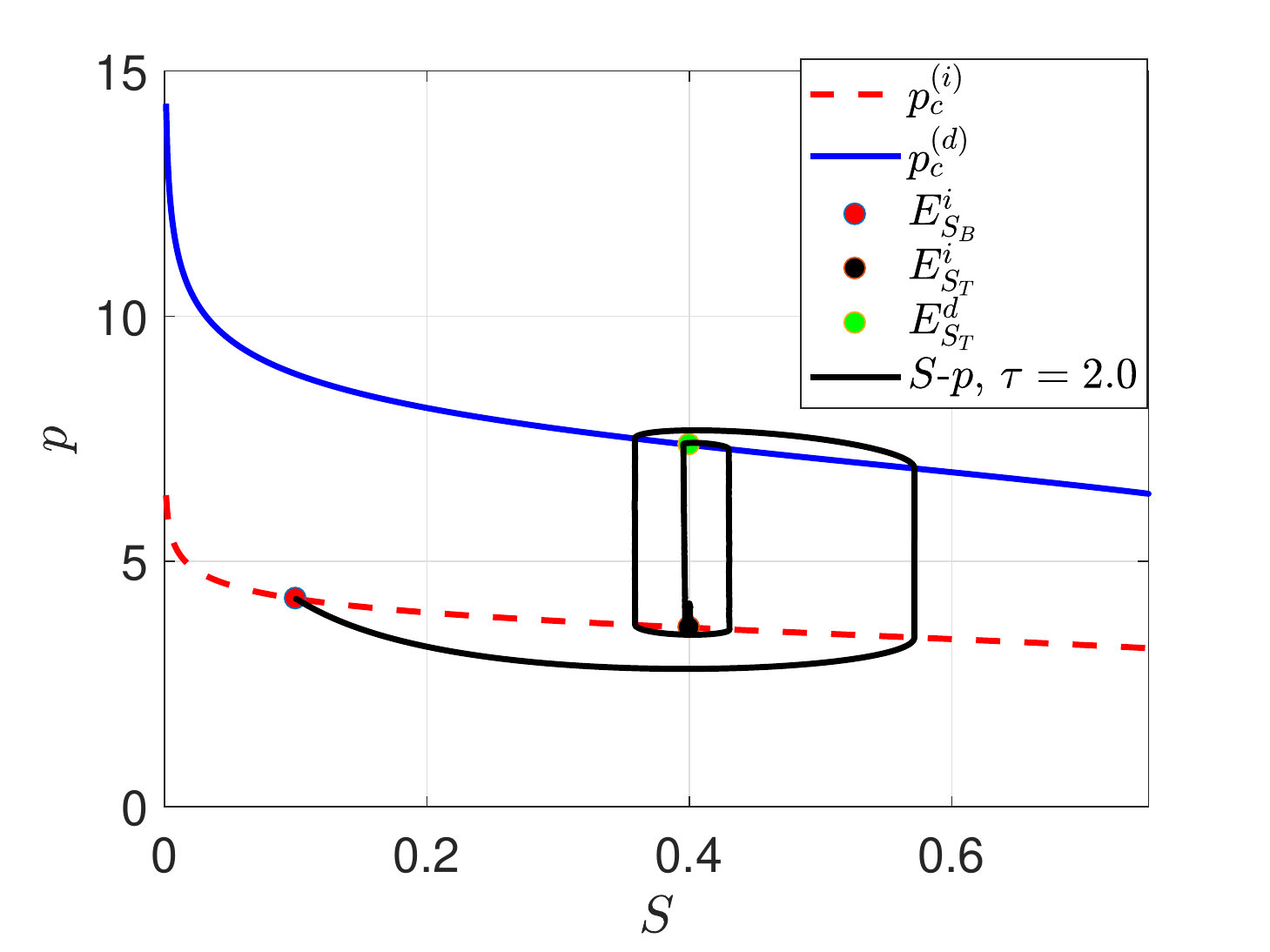}
\caption{\label{2PF_fig:orbits} Orbits for different $\tau$ parameters in the $S$-$p$ plane.}
\end{figure}
The parameter choice considered so far, corresponds to the solution class $\mathcal{A}$ (see \eqref{2PF_eq:solution_classes}), 
whose entropy  solution consists of a single shock without any saturation overshoots 
(see \Cref{2PF_fig:entropy_sol} (top)). However, there are two further solution classes, $\mathcal{B}$ and $\mathcal{C}$ 
(see  \eqref{2PF_eq:solution_classes}), arising in the context of
Scenario A, represented by entropy solutions \eqref{2PF_eq:CaseAentropyStauinB} and \eqref{2PF_eq:SolCaseC}. 
In case of solution class $\mathcal{B}$,
the entropy solution is given by saturation plateau that is formed by an infiltration wave followed by a drainage wave. 
The saturation at plateau level
is denoted by $\hat{S}_B\left(\tau\right)$. For solution class $\mathcal{C}$, 
the entropy solution exhibits a rarefaction wave connecting $S_T$ with $\hat{S}_B\left(\tau\right)$, which is connected to $S_B$ by a shock.
\begin{figure}[h!]
\centering
\includegraphics[scale=.35]{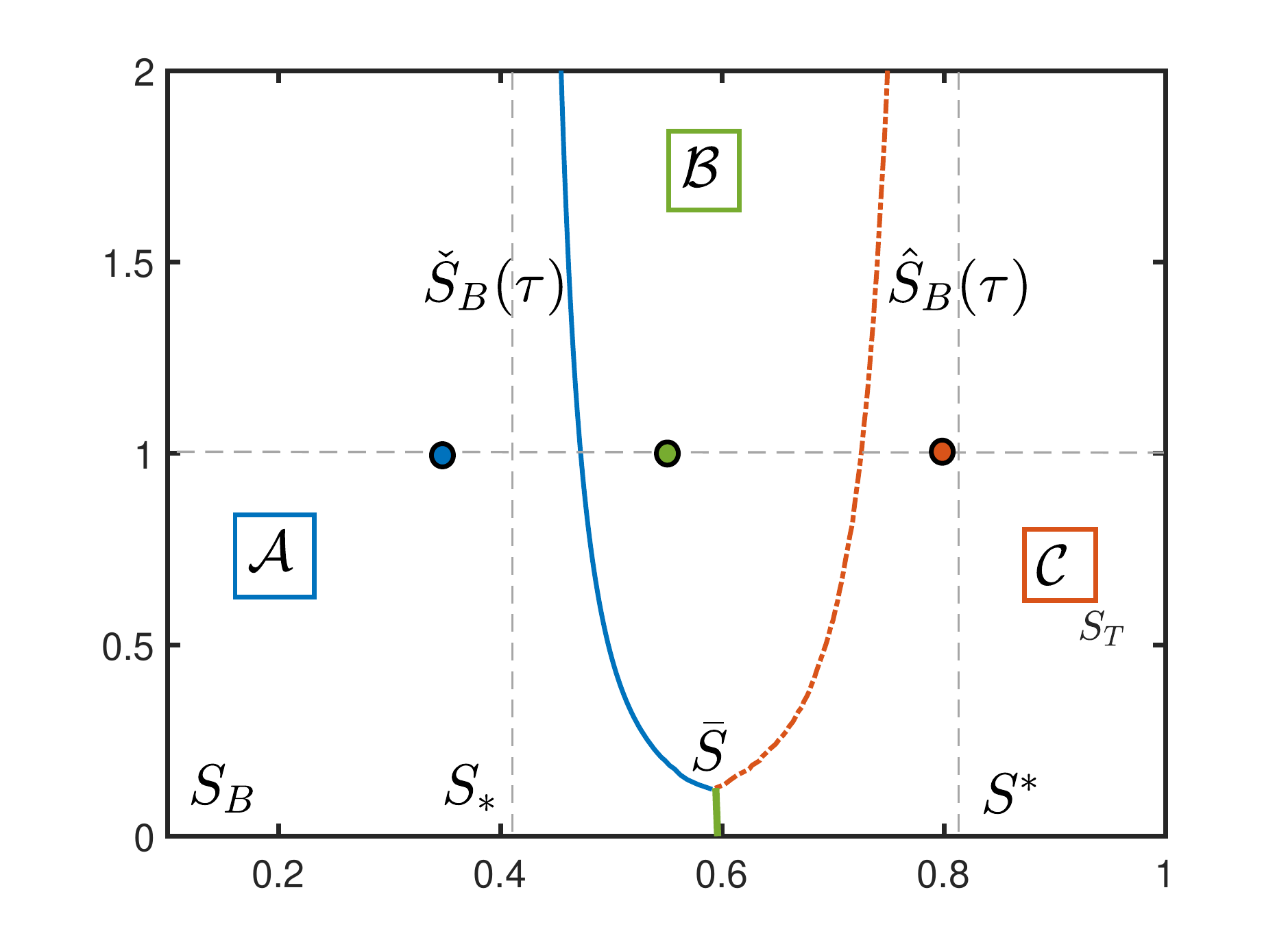}
\caption{\label{2PF_fig:NumResBifurcationDiagram} The $\hat{S}_B(\t)$ and $\check{S}_B(\t)$ curves computed for $S_B=0.1$. 
The characteristic saturations are as in \eqref{2PF_eq:NumResSdefs}. The corresponding Sets $\mathcal{A},\; \mathcal{B}$ and 
$\mathcal{C}$ along with $(S_T,\t)$ test pairs used in \Cref{2PF_fig:entropy_sol} are shown.}
\end{figure}
\begin{figure}[H]
\begin{minipage}{.45\textwidth}
\centering
\includegraphics[scale=.33]{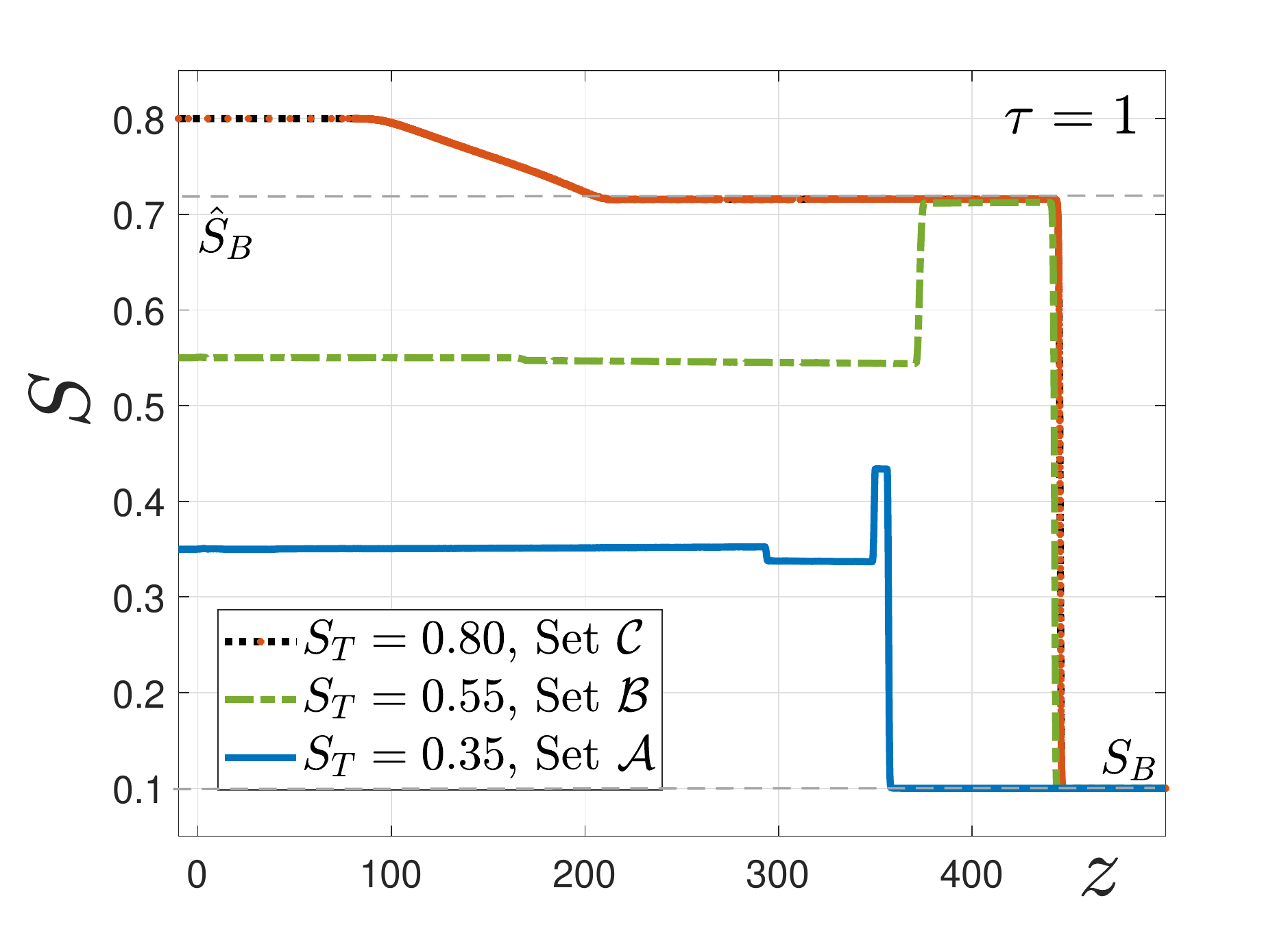}
\end{minipage}
\begin{minipage}{.45\textwidth}
\centering
\includegraphics[scale=.33]{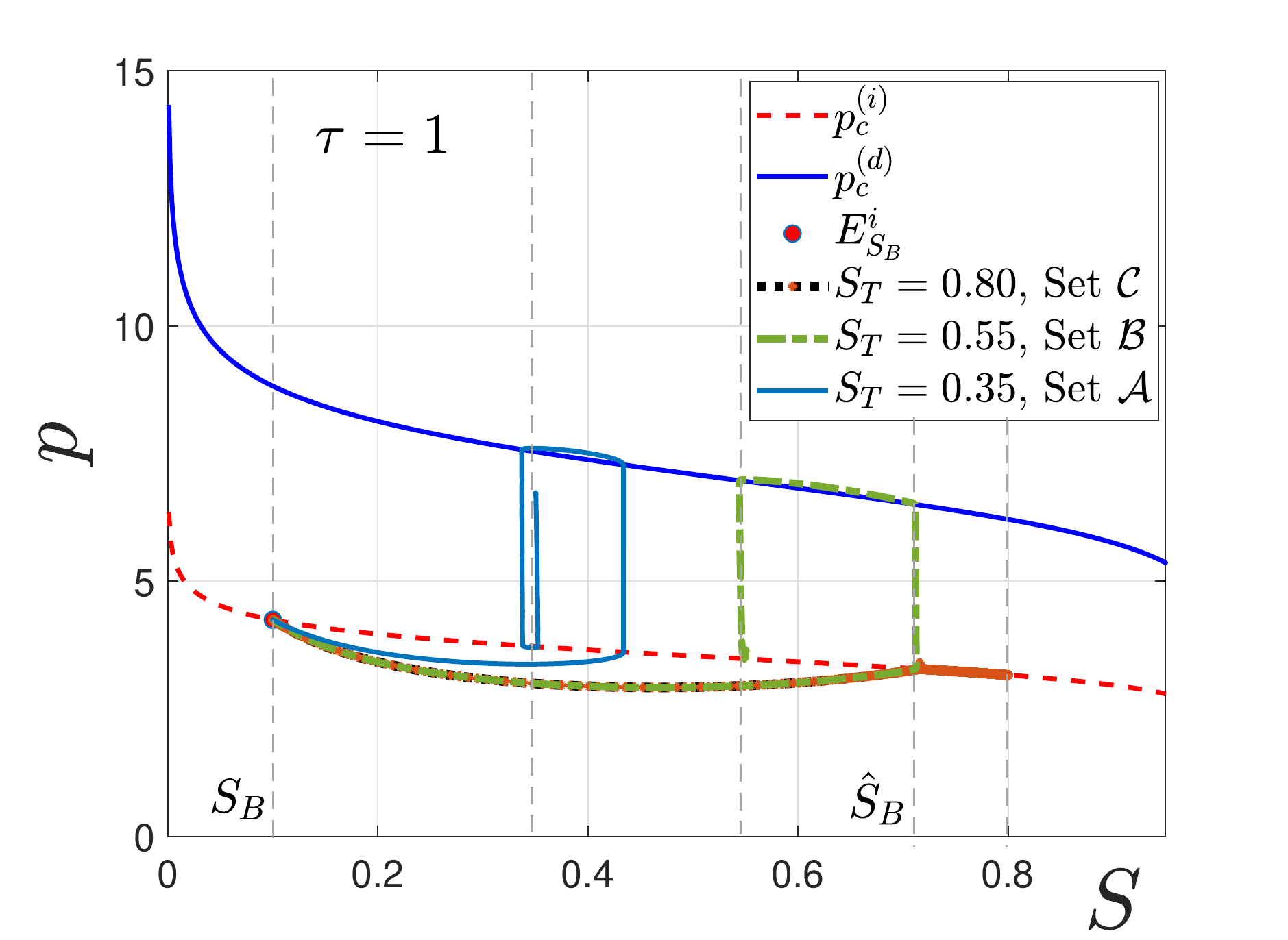}
\end{minipage}
\caption{\label{2PF_fig:entropy_sol} Numerical solutions corresponding to  different $(S_T,\t)$ pairs 
from solution classes $\mathcal{A}$, $\mathcal{B}$ and  $\mathcal{C}$, marked in \Cref{2PF_fig:NumResBifurcationDiagram}. 
Here, $\t=1$ is fixed and $S_T$ is chosen from $\{0.35,0.55,0.8\}$. The (left) plot shows the variation of $S$ with $z$, 
whereas, the (right) plot shows $p$ vs. $S$. The saturation plateau for the Sets $\mathcal{B}$ and  $\mathcal{C}$ is observed at $\hat{S}_B=.7158$.}
\end{figure}
To observe these cases numerically, we compute the $\hat{S}_B(\t)$ and $\check{S}_B(\t)$ curves introduced in Definition 
\ref{2PF_def:Supdown_i}, see \Cref{2PF_fig:NumResBifurcationDiagram}. In the figure we fix $\t=1$ and vary $S_T$ so that the 
pairs $(S_T,\t)$ belong to one of the sets $\mathcal{A},\; \mathcal{B}$ and $ \mathcal{C}$. The results are shown in \Cref{2PF_fig:entropy_sol} 
with the (left) plot showing the variation of $S$ with $z$, and the (right) plot showing the profiles in the $S$-$p$ phase plane.  
The curves corresponding to Set $\mathcal{A}$ show a direct travelling wave connecting $S_B$ and $S_T=0.35$. Some oscillatory 
behaviour around $S_T$ can be observed since $\t$ is comparatively large, however, the existence of a single travelling wave 
between $S_B$ and $S_T$ implies that these states are connectable by an admissible shock in the hyperbolic limit. 
Next, choosing $S_T=0.55$, $(S_T,\t)$ lies in Set $\mathcal{B}$, and a solution consisting of an infiltration wave followed 
by a drainage wave is computed in accordance with the theory. Again, small oscillations are seen in the drainage wave part which 
is expected from \Cref{2PF_cor:DrainageOscillation} since $\t$ is large. The resulting plateau has saturation $0.7158$, whereas, 
the prediction from \Cref{2PF_fig:NumResBifurcationDiagram} is $\hat{S}_B(\t)=0.7254$. 
Finally, for $S_T = 0.8$, the pair $(S_T,\t)$ belongs to the Set $\mathcal{C}$. The numerical solution exhibits 
a shock-like structure followed by a plateau and they coincide with the infiltration wave of Set $\mathcal{B}$ on 
both plots of \Cref{2PF_fig:entropy_sol}. Moreover, a rarefaction wave between $\hat{S}_B\left(\t\right)$ and $S_T$ is detected. 
Thus, we conclude that the saturation profiles in \Cref{2PF_fig:entropy_sol} correspond to the entropy solutions depicted 
in \Cref{2PF_fig:EntropySol} and the numerical results are in agreement with the theory.

\subsection{Numerical results for Scenario B} \label{2PF_sec:NumresScenarioB}

In case of Scenario B, we choose the following boundary conditions with respect to the pressure variable. As in the previous subsection, they are
of Neumann type at $z=z_{in}$ and of Dirichlet type at $z=z_{out}$: 
\begin{equation}
\label{2PF_eq:boundary_conditions_Scenario_B}
 p^\prime \left( z_{in}, t \right)  =0  \text{ and } p\left( z_{out}, t \right) = \Pdr(S_B) \text{ for all } t>0.
\end{equation}
Moreover,
the boundaries of the domain are given by: $z_{in}= -10$ and $z_{out}= 190$.
To make matters interesting, contrary to the previous subsection, we do not start with an infiltration state for $S_B$, but with a drainage state. 
Due to the fact that we consider hysteresis both in the capillary pressure and relative permeabilities, 
fractional flow functions are introduced both for infiltration and for drainage. We use
$$
f^{(i)}(S)=\frac{S^2}{S^2+ 3(1-S^2)},\; f^{(d)}(S)=\frac{S^2}{S^2+ 2(1-S^2)} \text{ with } N_g = 0,
$$
and define $F^{(i)}$ and $F^{(d)}$ accordingly.
We verified numerically that if $S_T>S_B$ and $F^{(i)}(S_T)<F(S_B,p_B)$ then the solution is frozen in time in the sense 
that $S(z,t)=S(z,0)$ for all $t>0$. This is what was discussed in Remark \ref{2PF_rem:FSTsmaller}.
To verify \Cref{2PF_prop:CaseFhysi,2PF_prop:CaseFhysii} and entropy solutions 
\eqref{2PF_eq:EntropyFhysCaseiA}-\eqref{2PF_eq:EntropyFhysCaseiiB}, we show two results:  
$S_B=S_{B,1}=0.3$, $S_T=S_{T,1}=0.95$ and $S_B=S_{B,2}=0.95$, $S_T=S_{T,2}=0.3$ both for $\t=0.02$. 
Let the corresponding solutions be $(S_{(i)},p_{(i)})$ and $(S_{(d)},p_{(d)})$. Since $S_{T,1}>\bar{S}_i$ for the first 
case (see Definition \ref{2PF_def:FhysQuatities}) and $\t$ is small, from \eqref{2PF_eq:EntropyFhysCaseiB} it is expected
that the entropy solution will have a shock from $S_{B,1}$ to $\bar{S}_i$, followed by a rarefaction wave from 
$\bar{S}_i$ to $S_{T,1}$. This is exactly what is seen from the viscous profiles obtained numerically, see \Cref{2PF_fig:FhysTWRW}. 
Similarly, for the second case, since $S_{T,2}<\bar{S}_d$ and $\t$ is small, we see from \Cref{2PF_fig:FhysTWRW} a viscous 
solution resembling a drainage shock followed by a rarefaction wave, as predicted in \eqref{2PF_eq:EntropyFhysCaseiiB}. 
\begin{figure}[h!]
\begin{minipage}{0.45\textwidth}
\centering
\includegraphics[scale=0.33]{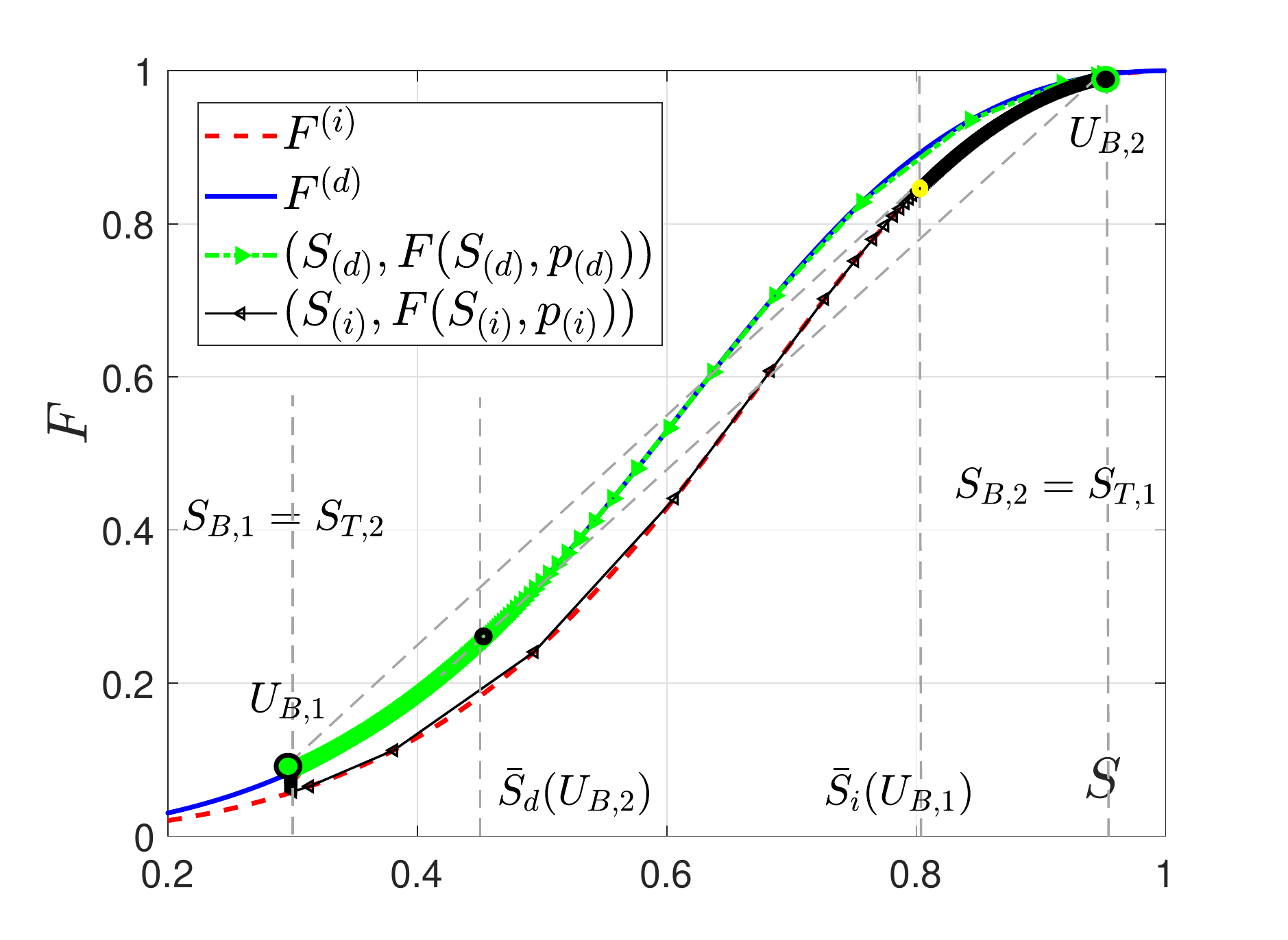}
\end{minipage}
\begin{minipage}{.45\textwidth}
\centering
\includegraphics[scale=0.33]{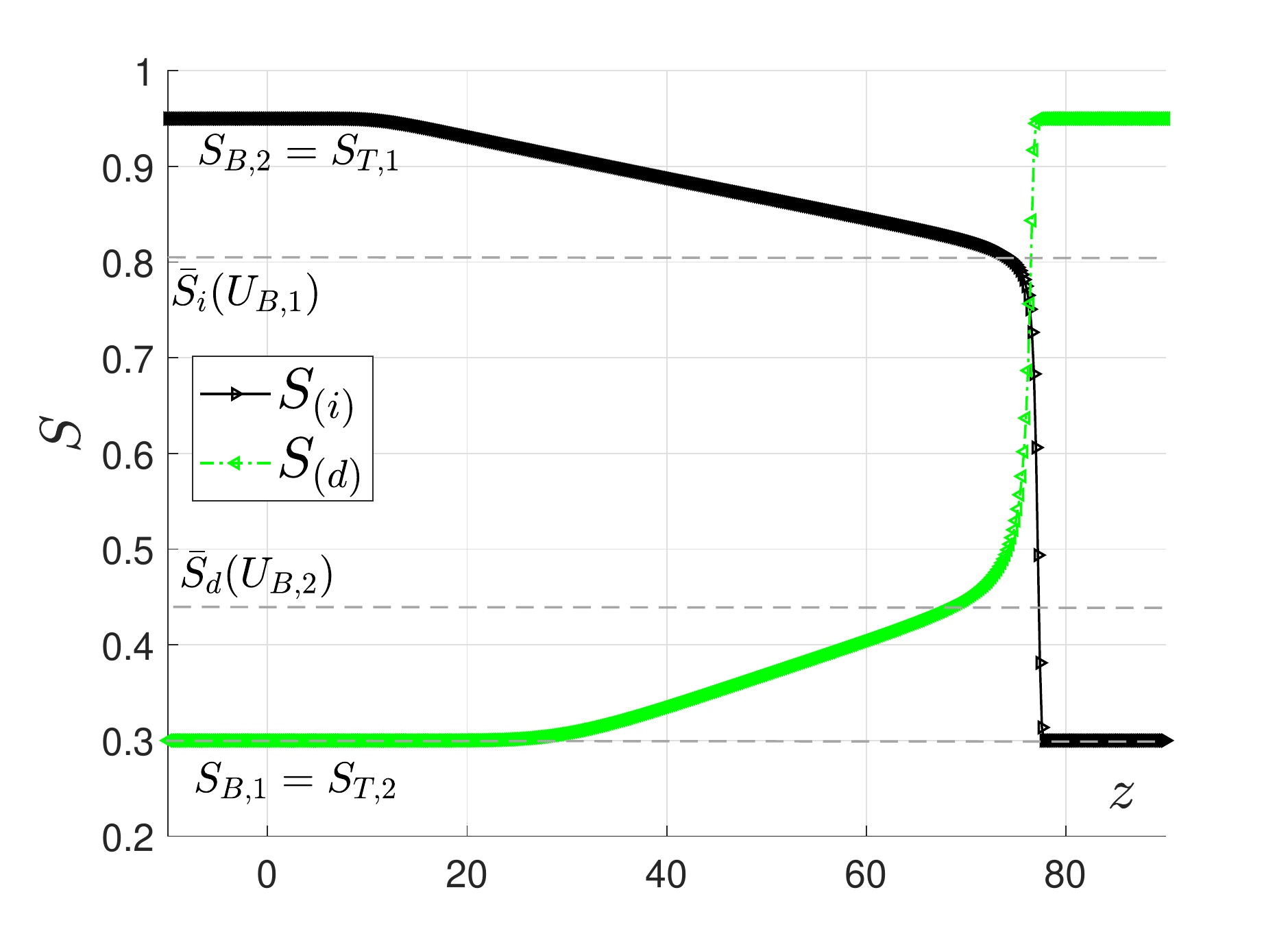}
\end{minipage}
\caption{The viscous solutions for $S_{B,1}=0.3,\; S_{T,1}=0.95$ denoted by $(S_{(i)},p_{(i)})$ and 
$S_{B,2}=0.95,\; S_{T,2}=0.3$ denoted by $(S_{(d)},p_{(d)})$ with boundary conditions \eqref{2PF_eq:boundary_conditions_Scenario_B} 
and $\t=0.02$ fixed. In the (left) plot, the solutions are shown in the $F$-$S$ plane  and in the (right) plot the saturations 
are plotted as functions of $z$. The points $U_{B,1}$ and $U_{B,2}$ and the saturations $\bar{S}_i(U_{B,1})$ and $\bar{S}_d(U_{B,2})$, 
introduced in Definition \ref{2PF_def:FhysQuatities}, are marked. The results agree with the predictions of 
\Cref{2PF_prop:CaseFhysi,2PF_prop:CaseFhysii} and \Cref{2PF_sec:EntropySolutionScenarioC}.}\label{2PF_fig:FhysTWRW}
\end{figure}
Next, we investigate whether a stable plateau is formed  for suitable parameter values by an infiltration wave 
and an ensuing drainage wave, as predicted in \Cref{2PF_prop:StablePlateau}. This happens only if $\t>\t^*_i(S_T)$, 
since in this case, a monotone connection between $(S_B,p_B)$ and $(S_T,\Pim(S_T))$ does not exist. To this end, in 
the numerical experiment we have used the following parameters:
 $$
S_B=0.3,\quad S_T=0.5\; \text{ and } \t=0.5. 
 $$
For a stable saturation plateau, the  velocities of the infiltration wave, connecting $S_B$ and $S_P$, and the drainage wave, 
connecting $S_P$ and $S_T$, have to be equal, i.e. if $c^P_{(i)}$ and $c^P_{(d)}$ are denoting the two wave-speeds, then
$$
c_{(i)}^P = \dfrac{F^{(i)}(S_P)-F^{(d)}(S_B)}{S_P-S_B} = \dfrac{F^{(i)}(S_P)-F^{(d)}(S_T)}{S_P-S_T} = c^P_{(d)},
$$
where $S_P$ stands for the saturation of the plateau. 
Geometrically, this equality is fulfilled, if the points 
$$
\left(S_B,F\left(S_B,p_B\right)\right), \left(S_T,F^{(d)}\left(S_T\right)\right) \text{ and } \left(S_P,F^{(i)}\left(S_P\right)\right)
$$ 
are located on the same line. This is precisely the condition that the solutions $(S^P_{(i)},p^P_{(i)})$ and 
$(S^P_{(d)},p^P_{(d)})$ of \Cref{2PF_prop:StablePlateau} satisfy.
Drawing a line through the given points for $S_B = 0.3$ and $S_T=0.5$ (see \Cref{2PF_fig:F_Points}), 
we obtain that a stable plateau should be located at $S_P \approx 0.634$. As seen  from \Cref{2PF_fig:F_Points}, 
the orbit in the $S$-$F$ plane stabilizes exactly at $S_P \approx 0.634$ and all the three points line up. 
Considering \Cref{2PF_fig:S_Plateaus}, we observe that the
saturation plateau is in a transient state in the beginning, but it stabilizes at $S_P \approx 0.634$ for longer 
times, as the speeds of the infiltration and drainage waves match.
\begin{figure}[h!]
\centering
\includegraphics[scale=0.5]{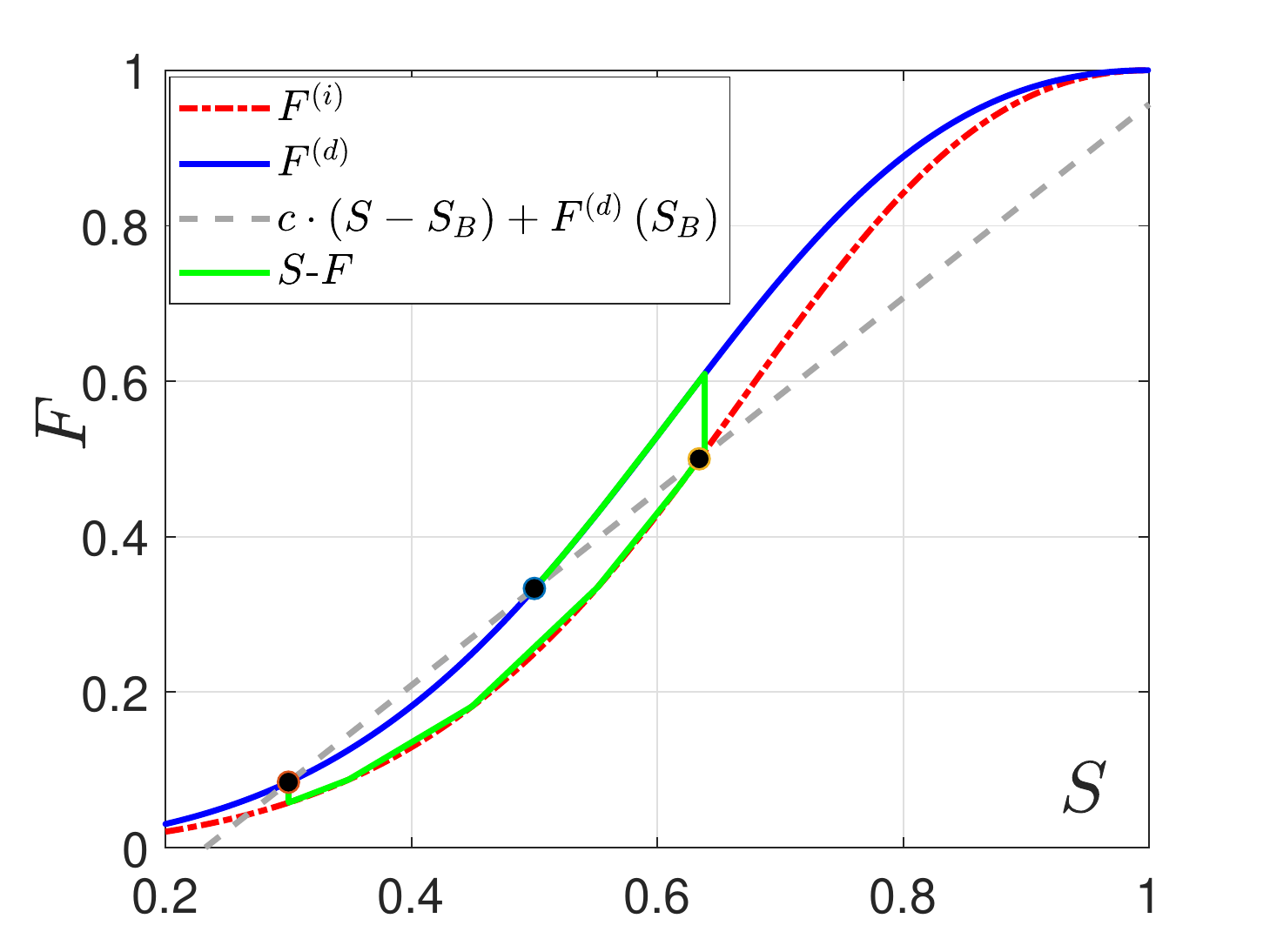}
\caption{\label{2PF_fig:F_Points} The orbit in the $S$-$F$ plane representing  a stable saturation plateau for Scenario B. 
The equilibrium points  for this orbit are shown on the flux curves.}
\end{figure}
\begin{figure}[h!]
\centering
\includegraphics[scale=0.38]{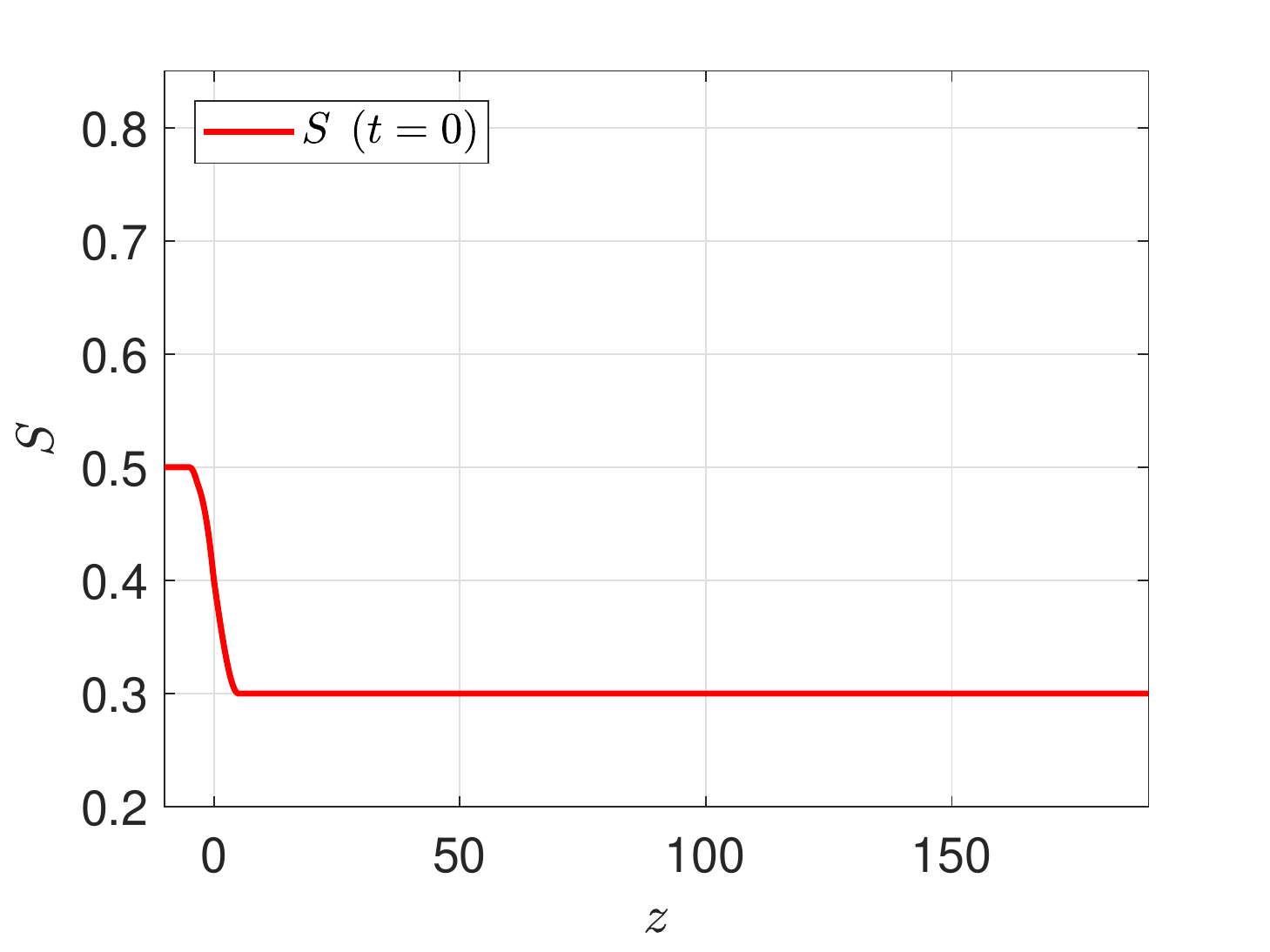}
\includegraphics[scale=0.38]{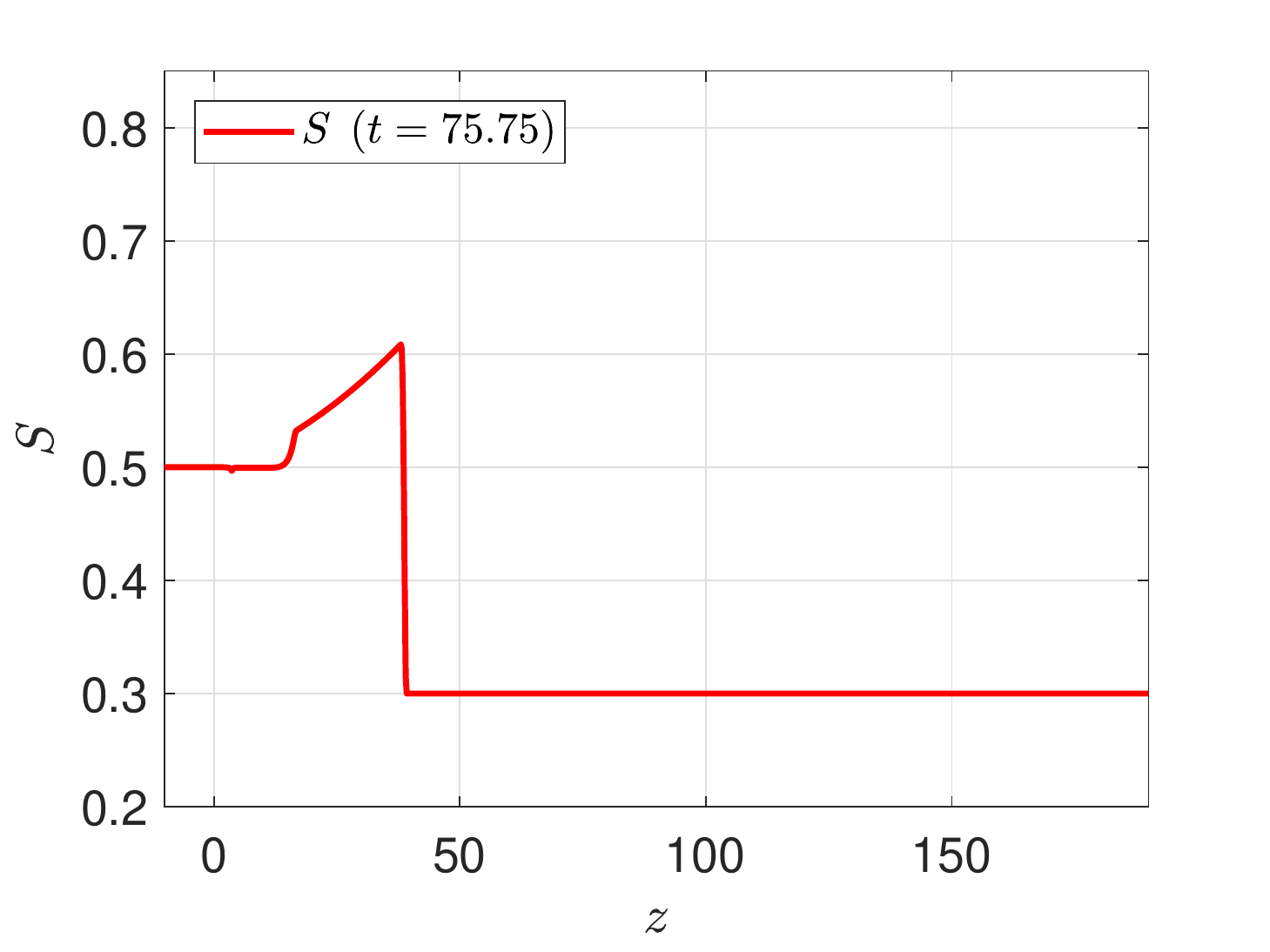}
\\
\includegraphics[scale=0.38]{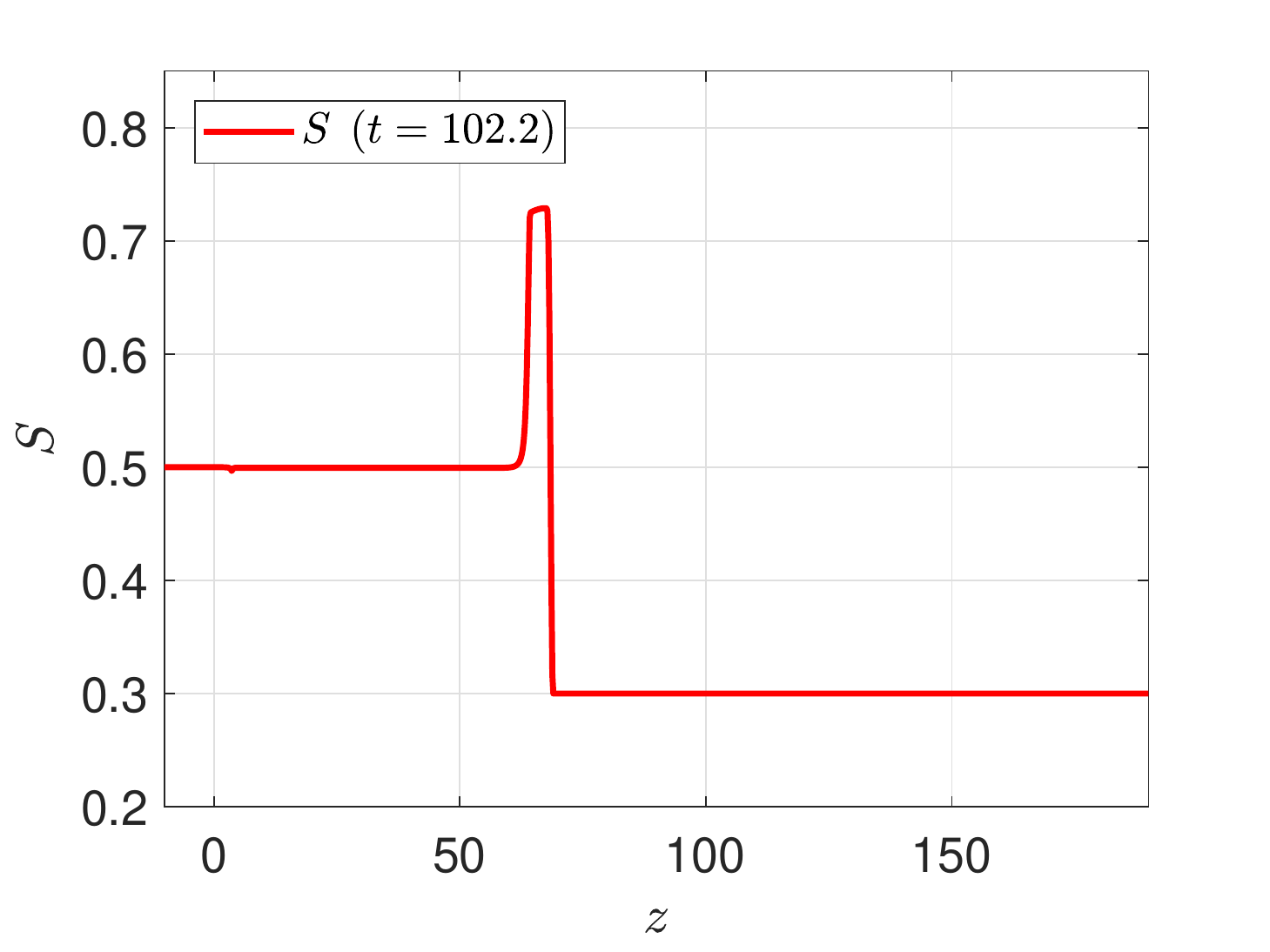}
\includegraphics[scale=0.38]{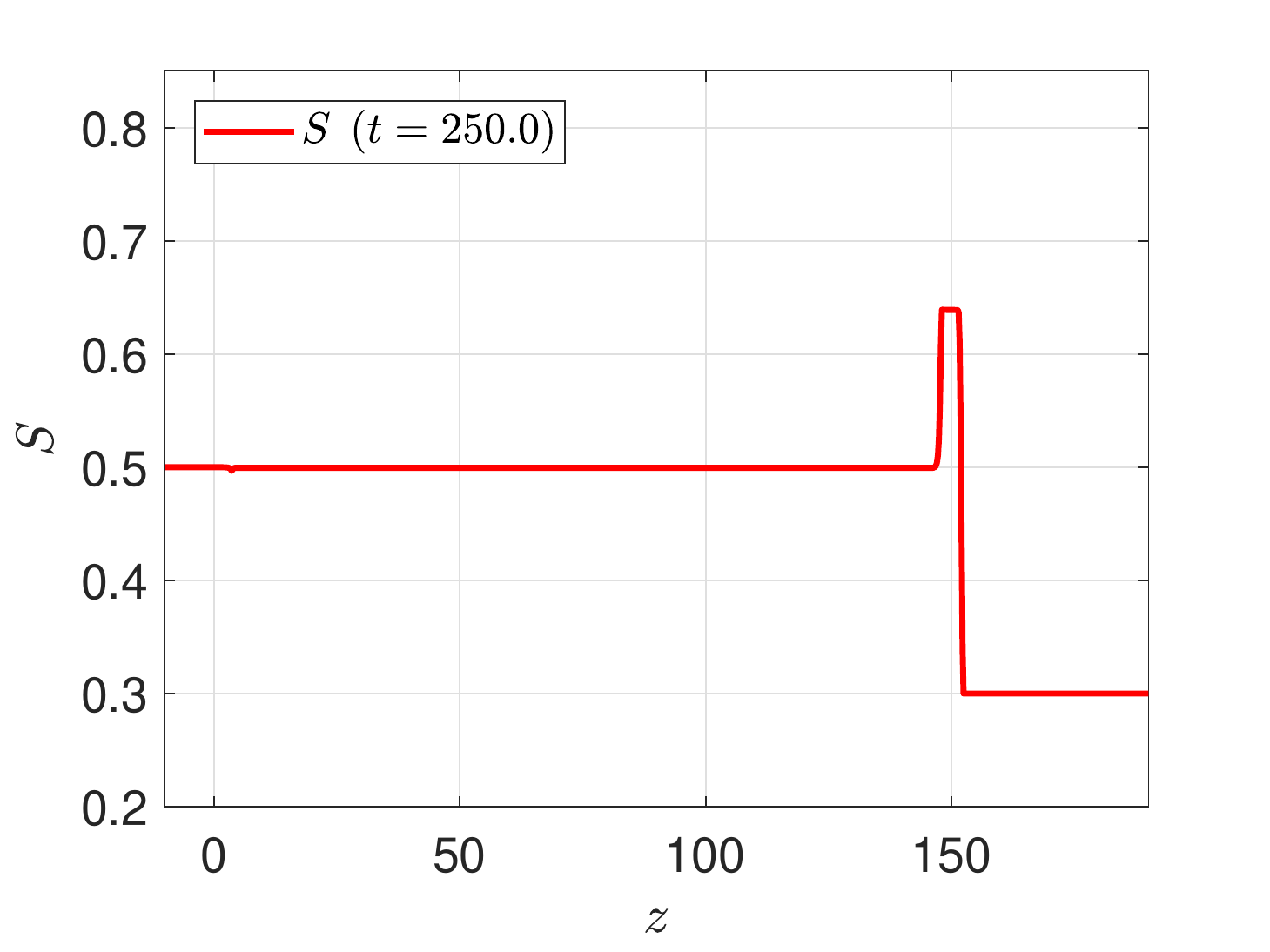}
\caption{\label{2PF_fig:S_Plateaus} Saturation profiles for different time points. Besides the initial condition (top left) and the final 
saturation profile (bottom right),
two intermediate profiles are shown, which have the form of a plateau. Contrary to the final saturation profile their plateaus 
are not stable, since the speeds of the infiltration and the drainage fronts are equal only for $S_P=0.634$.}
\end{figure}

\section{Final remarks and comparison with experiments}
\label{2PF_sec:Conclude}

In this work, a one-dimensional two-phase flow model has been analysed for infiltration problems. For simplicity, 
we have assumed that the medium is homogeneous and a constant total velocity is prescribed at the boundary. Dynamic and hysteretic 
effects are included in the capillary pressure with transitions between drainage and infiltration processes being modelled by a 
play-type hysteresis model having vertical scanning curves. Relative permeabilities are modelled as functions of saturation and 
capillary pressure in order to make their hysteretic nature explicit.

The focus being on travelling waves (TW), the system of partial differential equations is transformed into a dynamical system. This system 
is analysed for two different scenarios, A and B. In Scenario A, the hysteresis appears only in the capillary pressure, and we 
consider a broad range of dynamic capillarity terms, from small to large ones. In Scenario B, hysteresis is included in both the 
relative permeabilities and in the capillary pressure, whereas the dynamic capillary effects are kept small. For each scenario the 
existence of TW solutions is studied. In particular, we show that if the dynamic capillary effects exceed a certain threshold value,  
the TW profiles become non-monotonic. Such results complement the analysis in \cite{cuesta2000infiltration,VANDUIJN2018232,mitra2018wetting}
done for the unsaturated flow case, respectively in \cite{van2007new,van2013travelling} for two-phase flow but without hysteresis. 
From practical point of view, the present analysis provides a criterion for the occurrence of overshoots in two-phase infiltration experiments.

Based on the TW analysis, we give admissibility conditions for shock solutions to the hyperbolic limit of the system. 
Motivated by the hysteretic and dynamic capillarity effects, such solutions do not satisfy the classical entropy condition.
This is because the standard entropy solutions to hyperbolic two-phase flow models are obtained as limits of solutions 
to classical two-phase flow models, thus not including hysteresis and dynamic capillarity. In particular, for the infiltration 
case of Scenario A, apart from the classical solutions, there can be solutions consisting of (i) an infiltration shock followed 
by a rarefaction wave having non-matching speeds, or (ii) an infiltration shock  followed by a drainage shock resulting in a 
growing saturation plateau (overshoot) in between. This is similar to the results in \cite{van2007new,van2013travelling}. 
In Scenario B, the entropy solutions are shown to depend also on the initial pressure. In particular, if certain parametric 
conditions are met, the solutions may include ones featuring a stable saturation plateau between an infiltration front and a drainage 
front, both travelling with the same velocity. Such solutions are obtained e.g. in \cite{schneider2018stable}, but only after 
generating the overshoot through a change in the boundary condition. All cases mentioned above have been reproduced by numerical 
experiments, in which a good resemblance has been observed between the TW results and the long time behaviour of the solutions 
to the original system of partial differential equations.

From practical point of view, we note that the present analysis can also be used to explain experimental results reported e.g. 
in \cite{dicarlo2004experimental,kalaydjian1992dynamic,glass1989mechanism,zhuang2017advanced}. The occurrence of saturation 
overshoots is predicted theoretically for high enough dynamic capillary effects, namely of the $\t$ value in \eqref{2PF_eq:ScalingOfTau}. 
In dimensionless setting this can be assimilated to an injection rate that is sufficiently large. This is in line with the 
experimental results in \cite{dicarlo2004experimental}, where the development of plateau like profiles was observed for high 
enough injection rates, as shown in Figure 5 of \cite{dicarlo2004experimental} and Figure 5.3 of \cite{zhuang2017advanced}. 
Similarly, in the water and oil case, the plateaus are seen to develop and grow in Figures 5-6, 8-9, 18 of \cite{gladfelter1980effect}.
This behaviour is predicted by the analysis in \Cref{2PF_sec:CaseA}. Moreover, Figure 10 of \cite{gladfelter1980effect} might 
be presenting the case when the saturation has developed a plateau between two fronts travelling with the same velocity, 
a situation that is explained by the authors by means of hysteretic effects in the flux functions. Such solutions are 
investigated numerically in \cite{schneider2018stable,hilfer2014saturation}, where it is shown that the plateaus can persist 
in time but without explaining how they are generated. The results in \Cref{2PF_sec:CaseB} partly support the conclusions there, 
but also explain the mechanism behind the development of such plateaus. We mention \cite{kacimov1998nonmonotonic} in this regard, 
where the authors conclude that a similar mechanism must be responsible for observed  stable saturation plateaus inside viscous fingers.

\section*{Acknowledgment}
K. Mitra is supported by Shell and the Netherlands Organisation for Scientific Research (NWO), Netherlands through the
CSER programme (project 14CSER016) and by the Hasselt University, Belgium through the project BOF17BL04. I.S. Pop is supported 
by the Research Foundation-Flanders (FWO), Belgium through the Odysseus programme (project
G0G1316N). C.J. van Duijn acknowledges the support of the Darcy Center of Utrecht University and Eindhoven University of Technology. 
The work of T. K{\" o}ppl and R. Helmig is supported by the Cluster of Excellence in Simulation Technology (EXC 310/2).
Furthermore, R. Helmig acknowledges the support of the Darcy Center and the Deutsche Forschungsgemeinschaft (DFG, German Research Foundation),
SFB 1313, Project Number 327154368.

\bibliography{Literature}
\nocite{*}
\bibliographystyle{plain}

\end{document}